\newcommand{\beq}{\begin{equation}}
\newcommand{\eeq}{\end{equation}}
\newtheorem{Theorem}{Theorem}
\newtheorem{lemma}[Theorem]{Lemma}
\newtheorem{remark}[Theorem]{Remark}
\newtheorem{corollary}[Theorem]{Corollary}
\newtheorem{defn}[Theorem]{Definition}
\newtheorem{proposition}[Theorem]{Proposition}
\newcommand{\bi}{\begin{itemize}}
\newcommand{\ei}{\end{itemize}}
\newcommand{\bd}{\begin{description}}
\newcommand{\ed}{\end{description}}
\newcommand{\be}{\begin{enumerate}}
\newcommand{\ee}{\end{enumerate}}
\newcommand{\bqn}{\begin{eqnarray}}
\newcommand{\eqn}{\end{eqnarray}}
\newcommand{\eqnn}{\nonumber\end{eqnarray}}
\newcommand{\eqnl}[1]{\label{#1}\end{eqnarray}}
\newcommand{\ba}[1]{\begin{array}{#1}}
\newcommand{\ea}{\end{array}}
\newcommand{\R}{\mathbb{R}}
\newcommand{\lam}{\lambda}
\newcommand{\al}{\alpha}
\newcommand{\vep}{\varepsilon}
\newcommand{\fixmeyc}[1]{\textcolor{red}{[Y:~#1]}}
\newcommand{\diag}{\operatorname{diag}}
\newcommand{\RR}{\mathbb{R}}
\newcommand{\NN}{\mathbb{N}}
\newcommand{\Id}{\operatorname{Id}}
\newcommand{\rf}{\mathbf{r}}
\newcommand{\KL}{\mathscr{KL}}
\newcommand{\Ks}{\mathscr{K}}
\newcommand{\As}{\mathscr{A}}
\newcommand{\Ts}{\mathscr{T}}
\title{\LARGE \bf Stabilization for a perturbed chain of integrators
  in prescribed time}
\author{{Yacine Chitour, Rosane Ushirobira and Hassan Bouhemou}
\thanks{Y. Chitour is with
Laboratoire des Signaux et Syst\`emes (L2S, UMR CNRS 8506), CNRS - CentraleSupelec - Universit\'e Paris-Sud, 3, rue Joliot Curie, 91192, Gif-sur-Yvette, France, and Rosane Ushirobira is with Inria, Univ. Lille, CNRS, UMR 9189 - CRIStAL - Centre de Recherche en Informatique Signal et Automatique de Lille, F-59000 Lille, France.}
\thanks{\tt yacine.chitour@l2s.centralesupelec.fr, rosane.ushirobira@inria.fr, HBCHOUF@gmail.com }
 \footnote{This research was partially supported by the iCODE Institute, research project of the IDEX Paris-Saclay, and by the Hadamard Mathematics LabEx (LMH) through the grant number ANR-11-LABX-0056-LMH in the ``Programme des Investissements d'Avenir''.}
 }
\begin{document}
\maketitle

\begin{abstract} In this paper, we consider issues relative to prescribed time stabilisation of a chain of integrators of arbitrary length, either pure (i.e., where there is  no disturbance) or perturbed. In a first part, we revisit the proportional navigation feedback (PNF) approach and we show that it can be appropriately recasted within the framework of time-varying homogeneity.  As a first consequence, we first recover all previously obtained results on PNF with simpler arguments. We then apply sliding mode inspired feedbacks to achieve prescribed stabilisation with uniformly bounded gains. However,
 all these feedbacks are robust to matched uncertainties only.
 In a second part, we provide a feedback law yet inspired by sliding mode which not only stabilises the pure chain of integrators in
 prescribed time but also exhibits some robustness in the presence of measurement noise and unmatched uncertainties. 

\end{abstract}
\tableofcontents
\section{Introduction}
In this paper, we consider the following problem: for $n$ positive
integer and $T$ positive real number, the perturbed chain of integrators is the control system given by 
\beq\label{eq:PCofI-0} 
\dot x(t)=J_nx(t)+\left(d(t)+b(t)u(t)\right)
\ e_n,\qquad t\in[0,T), \quad x(t)\in \mathbb{R}^n,\quad
  u(t)\in\mathbb{R} 
\eeq 
where $(e_i)_{1\leq i\leq n}$ denotes the
  canonical basis of $\mathbb{R}^n$, $J_n$ denotes the $n$-th Jordan
  block (i.e., $J_ne_i=e_{i-1}$ for $1\leq i\leq n$ with the
  convention $e_0=0$), $d(\cdot)$ and $b(\cdot)$ denote respectively a matched uncertainty and the
  uncertainty on the control respectively. Moreover we assume that
  there exists $\underline{b}>0$ such that
  \beq\label{eq:b0} 
  b(t)\geq \underline{b}, \, \quad \forall t\in
            [0,T).  
            \eeq

Our goal consists in designing  a feedback control $u$ that renders the system
fixed-time input-to-state stable in any time $T >0$ (prescribed-time stabilisation) and possibly convergent to
zero (PT+ISS+C) (cf. \cite{Krstic2017}, and Definition~\ref{def:PT-ISS} given below). Note
also that one may asks robustness properties in the presence of noise measurement $d_1$ and unmatched uncertainty $d_2$, for instance, if the feedback control $u$ is static, it takes the form $u=F(x+d_1)+d_2$, where the feedback law $F(x)$ stabilises the $n$-th order pure chain of integrators.

Prescribed time stabilisation is a more difficult objective than mere finite time stabilisation and it has a long history especially in 
missile guidance \cite{Zarchan} and other applications. 
There are two main approaches for solving this problem: 
$(a)$ \emph{proportional navigation feedback} (PNF), which is linear in the state $x$ and with time-varying gains blowing up to infinity towards the prescribed fixed time; $(b)$ optimal control with a terminal constraint, where such a dependency of the gains is implicit. Stemming from the 
PNF design for second order chains of integrators, a general approach is proposed in \cite{Krstic2017} for $n$-th order perturbed chains of integrators, i.e., in \eqref{eq:PCofI-0}). The feedback law has the form $u(t)=K^TP_1(\frac1{T-t})(x(t))+P_2(\frac1{T-t})$, where $P_1$ and $P_2$  are polynomials (either matrix or real valued) and the vector $K$ must be chosen in a rather involved way.
The first term in the feedback is definitely of PNF type but the second one is only necessary for the convergence argument and does not appear for second order models for instance. Anyway, the controller in \cite{Krstic2017} does tend to zero as $t$ tends to $T$ even though the gains blow up and it exhibits excellent robustness 
properties in the case of matched uncertainties. However, the authors only suggest that it behaves poorly in case of measurement noise or unmatched uncertainties and also claim that all known techniques (including theirs) do not work in case of unmatched uncertainties. Finally, due to the rather complicated stability analysis as well as the involved construction of the feedback, it is not clear how to measure quantitatively the limitations of that feedback (see Section $3.2$ in \cite{Krstic2017}) and to possibly improve its results.

The first part of the present paper aims at revisiting 
PNF design with a new perspective. We show that it can be naturally seen as a particular instance of 
  weighted-homogeneous control systems (cf. \cite{SEFL} for instance) 
  with the usual homogeneity coefficient not anymore constant but being a time-varying 
  function. Indeed, recall that a PNF has the form 
  \begin{equation}\label{eq:PNF}
  u_{PNF}(t)=\sum_{i=1}^n\frac{k_ix_i(t)}{(T-t)^{n-i+1}},\quad t\in [0,T),
  \end{equation}
 where $K=(k_1,\cdots,k_n)\in\mathbb{R}^n$. Let $\rf=(n-i+1)_{1\leq i\leq n}$ the weight vector and set, for $\lambda>0$, $D^\rf_\lambda$ to be the diagonal matrix made of the $\lambda^{n-i+1}$'s. Rewriting $u_{PNF}(t)=K^TD^\rf_{\lambda(t)}x(t)$ with 
 $\lambda(t)=\frac1{T-t}$ suggests at once to consider the new state $y(t)=D^\rf_{\lambda(t)}x(t)$ and $u_{PNF}$ reduces to $K^Ty$. Now, the dynamics of $y$ with respect to the time scale $s(t)=\ln(\frac{T}{T-t})$ turns out to be
  \begin{equation}\label{eq:fresh}
 \frac{dy}{ds}=(D_{\rf}+J_n)y+(b(s)u(s)+d(s))e_n,\quad s\geq 0,
 \end{equation}
 where $D_{\rf}$ is the constant diagonal matrix  made of the $n-i+1$'s. Hence, the original problem of prescribed time stabilisation of \eqref{eq:PCofI-0} in time $T>0$ has been reduced to the stabilisation of an $n$-th order perturbed chain of integrators with the extra term $D_{\rf}y$ in 
  \eqref{eq:fresh} with respect to \eqref{eq:PCofI-0}.
 Note that, to the best of our knowledge, it seems to be the first time that one considers a time-varying homogeneity coefficient in the context of stabilisation of weighted-homogeneous systems in that general manner. Usually, the homogeneity coefficient, when non constant, is state-dependent (cf. \cite{praly1997} as the pioneering reference for fixed-time stabilisation of linear systems, then \cite{Moreno11} and \cite{hcl12} for instance, in the case of second order and $n$-th order perturbed chains of integrators respectively.)
  
  With the previous viewpoint, it is immediate to see that PNF (and its variant given in \cite{Krstic2017}) is nothing more but the 
  stabilisation of \eqref{eq:fresh} with a linear feedback. As a consequence, we recover all the results of \cite{Krstic2017} with much simpler arguments and the limitations (as well as the advantages) of such a feedback appear in a transparent way. In particular, 
  our convergence analysis easily reduces to the verification of an LMI (see Proposition~\ref{prop:LMI0} below), whose solution is 
  essentially given in \cite{CS-2010}. Moreover, the fact that measurement noise and unmatched uncertainties in \eqref{eq:PCofI-0} cannot be handled with that linear feedback is obvious since the corresponding disturbances become amplified by $D^\rf_{\lambda(t)}$ in \eqref{eq:fresh} and one can measure explicitly their destabilising effect.
  
  One can then turn to other types of stabilisation for \eqref{eq:fresh}. If the settling time of the system associated to some feedback law $u=F(s,y)$ is infinite (i.e., the supremum over the initial conditions $x_0$ of the time needed to reach the origin for the trajectory of 
  \eqref{eq:fresh} fed by $u=F(s,y)$ and starting at $x_0$), 
  then we will unavoidably face the numerical challenge of
  plotting $y(s(t))=D^\rf_{\lambda(t)}x(t))$ in \eqref{eq:PCofI-0}, with $D^\rf_{\lambda(t)}$ growing unbounded as $t$ tends to $T$. Therefore, we should aim at feedback laws $u=F(s,y)$ providing 
  fixed-time convergence for the $y$ variable. On the other hand,
 recall that the $n$-th order perturbed chain of integrators is the basic model for sliding mode control, cf. \cite{SEFL}, for which there exist plenty of efficient finite time stabilizers with eventually good robustness properties. 
  At the heart of these stabilizers, lies the technic of weigthed-homogeneity with \emph{constant} homogeneity coefficient.
  We will show that this technic easily extends to handle  \eqref{eq:fresh} and its extra linear term $D_{\rf}y$ to produce 
  fixed-time stabilizers for \eqref{eq:fresh} under the assumption that bounds on $b$ and $d$ in \eqref{eq:fresh} are known a priori. In particular, under that assumption, this resolves in a satisfactory manner one of the issues raised in \cite{Krstic2017}, namely that of avoiding a gain growing unbounded without sacrifice on the regulation accuracy in $x$. 
   
  The second objective of the paper consists in addressing the difficult issues of robustness with respect to measurement noise 
  and unmatched uncertainties for prescribed time stabilisation of 
  \eqref{eq:PCofI-0}. As mentioned earlier regarding time-varying homogeneity approach, the disturbances corresponding to these perturbations become, at the best, amplified by $D^\rf_{\lambda(t)}$ in \eqref{eq:fresh}. It is not clear at all how to handle \eqref{eq:fresh} with disturbances growing unbounded. This is why we present in the second part of the paper a feedback design
 that does not involve any time-varying function $\lam(\cdot)$. This will allow us to provide partial robustness results in case of measurement noise and unmatched disturbances on the feedback. Here, robustness must be understood in the ISS setting of Definition~\ref{def:ISpS} and not anymore according to Definitions~\ref{def:PT-ISS} and~\ref{def:PT-ISS-C}. 
Our construction is based on fixed-time stabilisation with a control on the settling time in the case of an unperturbed chain of
 integrators and then on the use a simple trick to extend that solution to prescribed-time stabilisation. To perform that strategy, one must get an explicit hold on several parameters. To be more precise, the fixed-time stabilisation design relies on sliding mode feedbacks with state-dependent homogeneity degree. This idea was first considered in \cite{hcl12} and \cite{HLCH-2017} with a completely explicit feedback law. The latter bears a serious drawback since it is discontinuous. This defect has been removed in a subsequent work in \cite{LREPP-2018}, relying on an appropriate perturbation argument. However that latter solution does not bear an explicit character, which is an issue to estimate the settling time, and hence it requires important extra work for practical implementations. Moreover, it can be adapted only to a restricted set of perturbations.

 Our feedback design for fixed-time stabilisation 
 relies on the sliding mode feedback
 laws proposed by \cite{Hong-2002} for finite-time stabilisation of an  $n$-th order pure chain of integrators . Recall that, in that reference, it is proved that, for
 every homogeneity parameter $\kappa\in [-\frac1n,\frac1n]$, there exists a
 control law $ u=\omega_\kappa^{H}(x)$ which stabilizes $\dot
 x=J_nx+u\ e_n $ and a Lyapunov function $V_{\kappa}$ for
 the closed-loop system satisfying $\dot V_{\kappa} \le -C
 V_{\kappa}^{\frac{2+2\kappa}{2 + \kappa}}$, for some positive
 constant $C$, independent of $\kappa$.  One of the main avantages of these feedbacks and Lyapunov functions is that they admit explicit closed forms formulas computable once the dimension $n$ is given. In order to first obtain fixed-time stabilisation, we choose, as in
 \cite{HLCH-2017}, a feedback law of the type $
 u=\omega_{\kappa(x)}^{H}(x)$, where the homogeneity parameter is a state 
 function and, by using the smart idea of \cite{LREPP-2018},
 we can also make $x\mapsto \kappa(x)$ continuous. We finally use a standard homogeneity trick to pass from fixed-time to prescribed-time stabilisation.

 The structure of the paper goes as follows. In Section
 \ref{sec:defns}, general stability notions and
 homogeneity properties are recalled. In Section \ref{sec:hom},
 time-varying weighted homogeneity is considered for $n$-th order perturbed chains of integrators: Subsection \ref{ss:linear} studies thoroughly linear time-varying homogeneous feedbacks
 while in Subsection \ref{ss:FTF}, we provide sliding mode based feedbacks with uniformly bounded gains. We gather in Section \ref{s:robust} a new design of a sliding mode inspired feedback for which we characterise explicitly the parameters and we give some ISS properties in presence of measurement noise and unmatched disturbances.
 Finally we collect in an appendix the proofs of technical results used in the text.

\section{Stability definitions} \label{sec:defns}

In this paper, we will consider various non autonomous differetial equations $\dot{x} = f(x,t)$, where $x\in\R^{n}$ and $f:\R^{n}\to\R^{n}$ is a vector field. When it exists, the solution of
$\dot{x} = f(x,t)$ for an initial condition $x_0\in\R^{n}$ is denoted by $X(t,x_0)$. We recall the main stability notions needed in the paper, 
\cite{Khalil-2002}.

\begin{defn}
 Let $\Omega$ be an open neighborhood of a forward
invariant set $\As \subset\R^{n}$.\footnote{Meaning that for $x_0 \in
  \As$ the solution $X(t,x_0) \in \As$ for all $t \geq 0$.} At
$\As$, the system is said to be:
\begin{itemize}
\item[(a)] {\em Lyapunov stable} if for any $x_{0}\in\Omega$ the
  solution $X(t,x_{0})$ is defined for all $t\geq0$, and for any
  $\epsilon>0$, there exists $\delta>0$ such that for any
  $x_{0}\in\Omega$, if $\Vert x_{0}\Vert_{\As}\leq\delta$ then $\Vert
  X(t,x_{0})\Vert_{\As}\leq\epsilon$, $\forall t \geq0$.

\item[(b)] {\em asymptotically stable} if it is Lyapunov stable and
  for any $\kappa>0$, $\epsilon>0$ there exists
  $T(\kappa,\epsilon)\geq0$ such that for any $x_{0}\in\Omega$, if
  $\Vert x_{0}\Vert_{\As}\leq\kappa$ then $\Vert
  X(t,x_{0})\Vert_{\As}\leq\epsilon$, $\forall t\geq
  T(\kappa,\epsilon)$.

\item[(c)] {\em finite-time converging from} $\Omega$ if for any
  $x_{0}\in\Omega$ there exists $0\leq T<+\infty$ such that
  $X(t,x_{0})\in\As$ for all $t\geq T$. The function
  $\Ts_{\As}(x_{0})=\inf\left\{T\geq0 \mid X(t,x_{0})\in\As,
  \;\forall t\geq T\right\}$ is called the\emph{ settling time for $x_0$} of the system.
  
\item[(d)] {\em finite-time stable} if it is Lyapunov stable and
  finite-time converging from $\Omega$.

\item[(e)] {\em fixed-time stable} if it is finite-time stable and
  $\sup_{x_{0}\in\Omega}\Ts_{\As}(x_{0})<+\infty$ and the latter is referred as the \emph{settling time} of the system.
.
\end{itemize}
\end{defn}

Furthermore, for prescribed-time stability and robustness issues, we consider disturbances $d:[0,\infty)\to\mathbb{R}^p$ which are
  measurable functions where $\Vert d\Vert_{[t_0,t_1)}$ denotes the
    essential supremum over any time interval $[t_0,t_1)$ in
      $[0,\infty)$. If $[t_0,t_1)=[0,\infty)$, we say that $d$ is
            bounded if $\Vert d\Vert_{\infty}:=\Vert
            d\Vert_{[0,\infty)}$ is finite. We have the following two
            definitions (cf.
            \cite{Krstic2017} and \cite{LREPP-2018}).
\begin{defn}\label{def:PT-ISS}
A system $\dot{x} = f(x,t,d)$ is {\em prescribed-time input-to-state
  stable in time $T$} (PT-ISS) if there exist functions $\beta \in
\KL$\footnote{A function $\gamma: \RR_+ \to \RR_+$ is said to belong
  to a class $\Ks$ if it is strictly increasing and continuous with
  $\gamma(0)=0$. A function $\alpha$ is said to belong to a class
  $\Ks_\infty$ if $\alpha \in \Ks$ and it increases to infinity. A
  function $\beta: \RR_+ \times \RR_+ \to \RR_+$ is said to belong to
  a class $\KL$ if for each fixed $t \in \RR_+$, $\beta(\cdot, t) \in
  \Ks_\infty$ and if for each fixed $s \in \RR_+$, $\beta(s,t)
  \underset{t \to \infty}{\longrightarrow} 0$.}, $\gamma \in \Ks$ and $\lambda:[t_0,t_0+T)\to\mathbb{R}^*_+$ such that $\lambda$ tends to infinity as $t$ tends to $t_0+T$ and,
for all $t \in [t_0, t_0+T)$ and bounded $d$, $|x(t)| \leq
  \beta \left( |x_0|, \lambda(t) \right) + \gamma
  \left(\lVert d \rVert_{[t_0,t]}\right)$.
\end{defn}

\begin{defn}\label{def:PT-ISS-C}
A system $\dot{x} = f(x,t,d)$ is {\em fixed-time input-to-state stable
  in time $T$ and convergent to zero} (PT-ISS-C) if there exist
functions $\beta$, $\beta_f \in \KL$, $\gamma \in \Ks$ and $\lambda:[t_0,t_0+T)\to\mathbb{R}^*_+$ such that $\lambda$ tends to infinity as $t$ tends to $t_0+T$ and,
for all $t \in [t_0, t_0+T)$ and bounded $d$,
$|x(t,d,x_0)| \leq \beta_f \left(\beta
  \left( |x_0|, t-t_0 \right) + \gamma \left(\lVert d
  \rVert_{[t_0,t]}\right),\lambda(t) \right)$.
\end{defn}

\begin{defn}\label{def:ISpS}
A system $\dot{x} = f(x,t,d)$ is {\em input-to-state practically stable
} (ISpS) if, for any bounded disturbance $d$, there exist functions $\beta\in \KL$, $\gamma\in \Ks$ and $c>0$ such that, for all $t\geq 0$ and bounded $d$,
$|x(t,d,x_0)| \leq \beta
  \left( |x_0|, t \right)+\gamma \left(\lVert d
  \rVert_{[0,t]}\right)+c$.
 The system is {\em input-to-state stable} (ISS) if $c=0$.
\end{defn}
Note that $(PT-ISS)$ is a much stronger property than ISS.
\begin{remark}
Definitions~\ref{def:PT-ISS} and \ref{def:PT-ISS-C} have been given in \cite{Krstic2017} but with the explicit choice $\lambda(t)=\frac{t-t_0}{T+t_0-t}$. 
\end{remark}
Next, basic definitions of homogeneity are collected.

\begin{defn}\hfill
  \begin{itemize}
    \item[(i)] A function $f : \RR^n \to \RR$ is said to be {\em
      homogeneous} of degree $m \in \RR$ with respect to the weights
      $\mathbf{r} = (r_1,...,r_n) \in \RR^n_{>0}$ if for every $x \in
      \RR^n $ and $\lambda \in \mathbb{R}_+^*$, $ f \left(
      D^{\mathbf{r}}_{\lambda} \ x \right) = \lambda^m f(x)$,
      where $D^{\mathbf{r}}_{\lambda} = \diag \left(
      \lambda^{r_i} \right)_{i=1}^n$ defines a family of
      dilations. We also say that $f$ is $\mathbf{r}$-homogeneous of
      degree $m$.

\item[(b)] A vector field $\Phi= (f_1, \dots, f_n) : \RR^n \to \RR^n$
  is said to be {\em homogeneous} of degree $m\in \RR$ if for all $1
  \leq k \leq n$, for all $x \in \RR^n $ and $\lambda \in
  \mathbb{R}_+^*$, $ f_k \left( D^{\mathbf{r}}_{\lambda} \ x \right) =
  \lambda^{m+r_k} f_k(x)$, i.e., each coordinate
  function $f_k$ is homogeneous of degree $m+r_k$. We also say that
  $F$ is $\mathbf{r}$-homogeneous of degree $m$.

\item[(c)] Let $\Phi$ be a continuous vector field. If $\Phi$ is
  $\mathbf{r}$-homogeneous of degree $m$, then the system $\dot{x} =
  \Phi(x), \ x \in \RR^n$ is {\em $\mathbf{r}$-homogeneous} of degree
  $m$.
  \end{itemize}
 \end{defn}

The next lemma is important in the proof of our results in Section
\ref{s:robust} (see for instance \cite{LREPP-2018}).

\begin{lemma} \cite{Nakamura} \label{le:notreL1}
Let $\dot x = f(x,t)$ be a $\rf$-homogenous system of degree $\kappa$
asymptotically stable at the origin. Then at the origin, the system is
globally finite-time stable if $\kappa < 0$, globally exponentially
stable if $\kappa = 0$ and globally fixed-time stable with respect to
any open set containing the origin
 if $\kappa > 0$.
\end{lemma}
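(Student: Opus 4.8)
The plan is to reduce everything to a scalar comparison argument via a \emph{homogeneous strict Lyapunov function}. First I would invoke a converse Lyapunov theorem of Rosier's type, in the version adapted to homogeneous systems used in \cite{Nakamura}: since the system is $\rf$-homogeneous of degree $\kappa$ and (uniformly, in $t$) asymptotically stable at the origin, there is a continuous function $V:\R^n\to\R_{\ge 0}$, of class $C^1$ on $\R^n\setminus\{0\}$, positive definite and radially unbounded, which is $\rf$-homogeneous of some degree $\mu$ that we are free to take as large as we wish — in particular $\mu>\max(0,-\kappa)$ — and which is a strict Lyapunov function for the system. Fixing a homogeneous norm $\|\cdot\|_{\rf}$ ($\rf$-homogeneous of degree $1$), one then has two-sided bounds $\underline{c}\,\|x\|_{\rf}^{\mu}\le V(x)\le\overline{c}\,\|x\|_{\rf}^{\mu}$, and $\|\cdot\|_{\rf}\to 0$ (resp. $\to\infty$) exactly when $|\cdot|\to 0$ (resp. $\to\infty$), with the standard comparisons on compacta.

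Next I would turn the Lyapunov condition into a differential inequality with an explicit exponent. For each fixed $t$, $\nabla V\cdot f(\cdot,t)$ is $\rf$-homogeneous of degree $\mu+\kappa$ (each $\partial_k V$ has degree $\mu-r_k$ and each $f_k$ has degree $\kappa+r_k$) and is negative on $\R^n\setminus\{0\}$. Minimising the continuous positive function $x\mapsto -\nabla V(x)\cdot f(x,t)$ over the compact homogeneous unit sphere $\Sigma=\{x:\|x\|_{\rf}=1\}$ gives a constant $m>0$, and rescaling $x=D^{\rf}_{\lambda}\xi$ with $\xi\in\Sigma$, $\lambda=\|x\|_{\rf}$, propagates the bound to all of $\R^n$, yielding
\[
\dot V(x,t)=\nabla V(x)\cdot f(x,t)\le -c\,V(x)^{\alpha},\qquad x\in\R^n,\ t\ge 0,\qquad \alpha:=1+\frac{\kappa}{\mu}>0,
\]
for some $c>0$. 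Here the one delicate point — and the main obstacle — is that this estimate must be \emph{uniform in $t$}: this is where one genuinely uses that $f(\cdot,t)$ is homogeneous of the same degree $\kappa$ for every $t$ and that the asymptotic stability is uniform, so that the converse theorem of \cite{Nakamura} supplies a time-independent homogeneous $V$ (equivalently, time-uniform bounds); the autonomous case is classical.

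Finally I would read off the three regimes from the scalar ODE $\dot v=-c\,v^{\alpha}$ together with the comparison principle applied to $v(t)=V(X(t,x_0))$. If $\kappa=0$ then $\alpha=1$ and $V(X(t,x_0))\le e^{-ct}V(x_0)$, which through the bounds on $V$ and the homogeneous-norm comparisons is exactly global exponential stability. If $\kappa<0$ then $\alpha<1$, $\frac{d}{dt}v^{1-\alpha}=-c(1-\alpha)$, so $v$ hits $0$ at time at most $V(x_0)^{1-\alpha}/\big(c(1-\alpha)\big)$; together with Lyapunov stability this is global finite-time stability, and it is not fixed-time since this bound is unbounded as $|x_0|\to\infty$. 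If $\kappa>0$ then $\alpha>1$, $\frac{d}{dt}v^{1-\alpha}=c(\alpha-1)>0$, so the time needed for $v$ to fall from $V(x_0)$ to any level $\rho>0$ is at most $\rho^{1-\alpha}/\big(c(\alpha-1)\big)$, \emph{independently of $x_0$}. Given any open set $\mathcal O$ containing the origin, choose $\rho$ with $\{V\le\rho\}\subset\mathcal O$; this sublevel set is forward invariant (since $\dot V<0$ on its boundary), every trajectory enters it within a time bounded uniformly over all initial conditions, and hence the system is globally fixed-time stable with respect to $\mathcal O$. The whole argument downstream of the homogeneous Lyapunov function is thus routine scalar calculus; all the work is in the first step.
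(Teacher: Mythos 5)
The paper offers no proof of this lemma at all---it is imported as a known result from the cited reference of Nakamura et al. (see also the Lopez-Ramirez et al. reference)---so there is nothing internal to compare with; your argument is precisely the standard one behind that citation (homogeneous converse Lyapunov function of degree $\mu$, the inequality $\dot V\le -c\,V^{1+\kappa/\mu}$ obtained by minimising on the homogeneous unit sphere and rescaling, then scalar comparison giving the finite-time, exponential and fixed-time-to-a-neighborhood regimes), and it is correct, including the observation that for $\kappa>0$ one must pass through a forward-invariant sublevel set $\{V\le\rho\}$ contained in the given open set. The one genuinely delicate point---uniformity in $t$ of the sphere minimum for the time-varying vector field, equivalently uniform asymptotic stability feeding a time-independent homogeneous $V$---is exactly the point you flag and delegate to the cited converse theorem, which is where it belongs.
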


%

\section{Time-varying homogeneity} \label{sec:hom}
Let $n$ be a positive integer, $(e_i)_{1\leq i\leq n}$
the canonical basis
of $\mathbb{R}^n$ and $J_n$ the $n$-th Jordan block, i.e. $J_ne_i=e_{i-1}$, $1\leq i\leq n$, with the convention that $e_0=0$. 
For $\lambda>0$, using the notation above for $D^\rf_\lambda$ (see also
\cite{CS-2010}), one has, for $r_i:= n-i+1$, $1\leq i\leq n$,
\begin{equation}\label{eq:dlam}
D^\rf_{\lambda}=\diag(\lambda^{r_i})_{i=1}^n, \, D^\rf_\lam J_n
\left(D^\rf_\lam\right)^{-1}=\lam J_n,\, D^\rf_\lam e_n=\lam e_n. \eeq
(The second relation above simply says that the linear vector field induced by $J_n$ on $\R^n$ is $\mathbf{r}$-homogeneous of degree $-1$.)

In the literature devoted to prescribed time stabilization
(see \cite{Krstic2017} and references therein) and as clearly stated in
Definitions~\ref{def:PT-ISS} and \ref{def:PT-ISS-C}, the quantity
$\frac{t-t_0}{T+t_0-t}$ is a new time scale which
tends to infinity as $t$ tends to the prescribed convergence time
$T$. This fact suggests to consider the homogeneity parameter $\lambda$
depending on the time $t$ in such a way that, if one sets the new time to be equal to
\begin{equation}\label{eq:newT}
s:[0,T)\to \mathbb{R}_+^*, \quad s(t)=\int_0^t\lam(\xi)d\xi, \eeq then $s(t)$ tends to infinity as $t$ tends to $T$. In that case,
  it is natural to consider the change of coordinates and time given by
\begin{equation}\label{eq:COC}
y(s)=D^\rf_{\lam(t)}x(t), \ \forall \ t\in[0,T).
\end{equation}
  
In order to analyse the dynamics of $y$ in the new time $s$, we use $y'$ to denote the derivative of $y$ with respect to. Using \eqref{eq:PCofI-0},
\eqref{eq:dlam} and \eqref{eq:newT}, we obtain: 
\beq \label{eq:der0}
\lam \ y'=\dot y =\dot \lam \ \frac{\partial
  D^\rf_{\lam}}{\partial\lam}(D^\rf)^{-1}_{\lam} y+\lam \ \left(J_n \ y +b \ u
\ e_n+ d \ e_n\right).
\eeq
 For every $\mu >0$, we also have  that
\[ \frac{\partial D^\rf_{\mu}}{\partial\mu}(D^\rf)^{-1}_{\mu}=\frac{1}{\mu}D_{\rf},
\ \text{ with} \quad D_\rf:=\diag(r_i)_{1\leq i\leq n}. \]

Then \eqref{eq:der0} becomes \beq\label{eq:der1} y'=\left(\frac{
  \dot\lam}{\lam^2}D_{\rf}+J_n\right)y+\left(b(s)\ u(s)+d(s)\right) e_n.
\eeq Here we consider the control $u$ and both $b$ and $f$ as
functions of the new time $s$.

Let $a:[0,T]\to\RR$ be a non negative continuous function so that the
$C^1$ function $A:[0,T]\to\RR$ defined by $A(t)=\int_t^Ta(s)\ ds$ is
positive on $[0,T)$. Setting
\begin{equation}\label{eq:lam0}
\lam(t):=\frac1{A(t)},\quad t\in [0,T),
\end{equation}
one gets that 
$$
\frac{\dot\lam(t)}{\lam^2(t)}=a(t), \quad t\in[0,T).
$$

It is then immediate to see that the function $\lam: [0,T)\to \mathbb{R}_+^*$ is increasing, tends to infinity as $t$ tends to $T$ and the time $s$ defined in \eqref{eq:newT} realizes an increasing $C^1$ bijection from $[0, T)$ to $[0,\infty)$.

We still use $a(s)$ to denote $a(t)$. 
With this choice,  \eqref{eq:der1} becomes
\beq\label{eq:der2}
y'=\left( a(s)D_{\rf}+J_n\right) y+\big(b(s)\ u(s)+d(s)\big) e_n.
\eeq

To solve the original problem of designing a feedback control $u$ that
renders the system FT-ISS-C in time $T >0$, the idea consists in
choosing
\begin{equation}\label{eq:feed0-0}
u=F(y(s)),
\end{equation}
where $F:\mathbb{R}^n\to \mathbb{R}$ is a continuous function to be chosen later.
\begin{remark}
An importance feature in stabilisation of control systems is the fact that one usually requires the feedback law to remain bounded and ideally, to tend to zero as the state $x$ tends to zero, even if in presence of disturbance. In the context of  prescribed time stabilisation of \eqref{eq:PCofI-0}, this feature is automatically guaranteed by our view point of time-varying homogeneity since the feedback law takes the form  \eqref{eq:feed0-0}: bounding $u(t)$ uniformly on $[0,T]$ simply reduces to bound the artificial state $y(s)$ uniformly on $\mathbb{R}_+$.  
\end{remark}
\begin{remark}\label{rem:stabY}
For the stabilisation of \eqref{eq:der2}, one can of course rely on linear feedback laws, as done in the next section (and already done in \cite{Krstic2017}) but also on sliding mode type of feedbacks which insure fixed time (in the scale $s$!) stabilisation with robust properties, see Subsection~\ref{ss:FTF}
 below.
 \end{remark}

\subsection{Linear feedback}\label{ss:linear}

We now revisit the results obtained in \cite{Krstic2017} at the light
of the time-varying homogeneity introduced in the previous section. To establish the connection with that
reference, one must compares our change of variable defined in
\eqref{eq:COC} and the one considered in \cite{Krstic2017}. At once,
one can see that the function $\mu(\cdot)=\frac{T^{m+n}}{(T-t)^{m+n}}$ in Eq. $(31)$ of
\cite{Krstic2017} corresponds, up to a positive constant, to the
time-varying homogeneity parameter $\lambda(\cdot)$ where $a(t)$ is
chosen as $a(t)=(T-t)^{m-1}$ (with $m\geq 2$ integer). In opposite to \cite{Krstic2017}, in our approach one does
not have to take time derivatives of $\lambda(\cdot)$ (or equivalently
of $\mu(\cdot)$), and hence our computations are simpler (in
particular no need of Lemmas $2$ and $3$ in \cite{Krstic2017}).

As for the feedback control in \cite{Krstic2017}, it is given by 
$u = - \frac{1}{b} \left( d + L_0+L_1+ kz \right)$ (cf. Eqs. $(42)$ and $(50)$ in \cite{Krstic2017}),
where $L_0$ is a linear combination of successive derivatives of
$\mu$ and the state components, $L_1$ contains a gain matrix
$K_{n-1}$, $k$ is a scalar gain and $z$ is a change of variable of the $n$-th coordinate of the 
state. The abaove expression of the feedback $u$ shows that this choice of feedback can be
essentially reduced to a linear one (realized by the constant $k$ and
the $\RR^{n-1}$ vector $K_{n-1}$ in \cite{Krstic2017}). This is the reason why we take here $F(y)=-K^Ty$ for some
vector $K\in\RR^n$ to be fixed later. In that case, after replacing $u$ in \eqref{eq:der2}, it follows     
\beq\label{eq:der3} y'=\left(
    a(s)D_{\rf}+J_n-b(s)\ e_nK^T\right) y+d(s)\ e_n, 
    \eeq 
    that is an
    equation of the type $y'=M(s)y+f (s) \ e_n$ where
    $M(s)=a(s)D_{\rf}+J_n-b(s)\ e_nK^T$ with $b(s)$ subject to
    \eqref{eq:b0}. In \cite{CS-2010}, such systems were considered
    (without the term $a(s)D_{\rf}$) and it was proven that there exists a
    positive constant $\mu>0$, a real symmetric positive definite
    $n\times n$ matrix $S>0$ and a vector $K\in\mathbb{R}^n$ such that
    \beq\label{eq:LMI0}
    \Big(J_n-b\ e_nK^T\Big)^TS+S\Big(J_n-b\ e_nK^T\Big)\leq -\mu
    \Id_n,\quad \forall\ \underline{b}\leq b\leq \overline{b}, \eeq
    where $\Id_n$ denotes the $n \times n$ identity matrix and $S$,
    $K$ and $\mu$ depend on $\underline{b}$ and $\overline{b}$.

A careful examination of the argument shows actually that one can remove the upper bound on the parameter $b$. We thus get the slightly stronger result, whose proof is given in Appendix, for sake of completeness.

\begin{proposition}\label{prop:LMI0}
Let $n \in \NN$ and $\underline{b}>0$. Then there exists a positive constant $\rho>0$, a real
symmetric positive definite $n\times n$ matrix $S$ and
$K\in\mathbb{R}^n$ so that \beq\label{eq:LMI1}
\Big(J_n-b\ e_nK^T\Big)^T S + S \Big(J_n-b\ e_nK^T\Big)\leq -\rho
\Id_n,\quad \forall b\geq \underline{b}.  \eeq
\end{proposition}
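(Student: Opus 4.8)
The plan is to exploit the single‑input structure of the pair $(J_n,e_n)$ and to choose the gain $K$ so as to neutralise the dependence on $b$. The key point is that if we look for $K$ \emph{proportional to} $Se_n$, say $K:=Se_n$, then a one‑line computation using $S=S^{T}$ gives
\beq
\big(J_n-b\,e_nK^{T}\big)^{T}S+S\big(J_n-b\,e_nK^{T}\big)
=J_n^{T}S+SJ_n-2b\,(Se_n)(Se_n)^{T}.
\eeq
Since $(Se_n)(Se_n)^{T}\succeq 0$, the map $b\mapsto-2b\,(Se_n)(Se_n)^{T}$ is non‑increasing in the Loewner order, so the left‑hand side is largest (in that order) at $b=\underline b$. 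Consequently it suffices to produce $S\succ 0$ making the inequality hold at the single value $b=\underline b$; it then holds for every $b\ge\underline b$ with the \emph{same} constant $\rho$. This is exactly the mechanism that removes the upper bound $\overline b$ imposed in \cite{CS-2010}.

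It remains to find $S\succ 0$ with $J_n^{T}S+SJ_n-2\underline b\,(Se_n)(Se_n)^{T}\preceq-\Id_n$. First I would note that $(J_n,e_n)$ is controllable, since the controllability matrix $\big[e_n,\ J_ne_n,\ \dots,\ J_n^{\,n-1}e_n\big]=\big[e_n,\ e_{n-1},\ \dots,\ e_1\big]$ is invertible. Standard linear–quadratic theory (with state weight $\Id_n$ and scalar control weight $\tfrac1{2\underline b}$) then yields a unique symmetric positive definite solution $S$ of the algebraic Riccati equation
\beq
J_n^{T}S+SJ_n-2\underline b\,Se_ne_n^{T}S+\Id_n=0,
\eeq
the positive definiteness of $S$ being a direct consequence of $\Id_n\succ 0$. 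Substituting $J_n^{T}S+SJ_n=2\underline b\,(Se_n)(Se_n)^{T}-\Id_n$ back into the identity above, with $K=Se_n$, gives for every $b\ge\underline b$
\beq
\big(J_n-b\,e_nK^{T}\big)^{T}S+S\big(J_n-b\,e_nK^{T}\big)
=-\Id_n-2(b-\underline b)\,(Se_n)(Se_n)^{T}\preceq-\Id_n,
\eeq
which is \eqref{eq:LMI1} with $\rho=1$. Equivalently, one may phrase the construction at the LMI level: writing $P=S^{-1}$ and $c=2\underline b$, the Riccati step becomes the feasibility of the (convex) LMI $J_nP+PJ_n^{T}-c\,e_ne_n^{T}<0$, which, modulo the scaling $P\mapsto D^{\rf}_{\lam}PD^{\rf}_{\lam}$, reduces to one value of $c$ and is the viewpoint adopted in \cite{CS-2010}.

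I do not anticipate a genuine obstacle, the argument being essentially a known one; the only point requiring care is the bookkeeping of scalar constants, so that the value of $b$ hard‑wired into the Riccati equation (or the LMI) is precisely $\underline b$. This is what makes the residual term $-2(b-\underline b)(Se_n)(Se_n)^{T}$ genuinely negative semidefinite on the whole ray $\{b\ge\underline b\}$ rather than on a proper sub‑ray. One should also check that invoking linear–quadratic theory does not silently reintroduce an upper bound on $b$; it does not, precisely because the structural choice $K\propto Se_n$ turns the perturbation $-b\,e_nK^{T}$ into a rank‑one negative semidefinite contribution to the Lyapunov inequality.
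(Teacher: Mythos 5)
Your proposal is correct, but it takes a genuinely different route from the paper. You pick $K=Se_n$ with $S$ the stabilizing solution of the algebraic Riccati equation $J_n^{T}S+SJ_n-2\underline{b}\,Se_ne_n^{T}S+\Id_n=0$ (existence and positive definiteness follow from controllability of $(J_n,e_n)$ and the state weight $\Id_n$), so that the closed-loop Lyapunov expression collapses to $-\Id_n-2(b-\underline b)\,(Se_n)(Se_n)^{T}\preceq-\Id_n$ for every $b\ge\underline b$; this is precisely the infinite upward gain-margin property of LQR-type feedback, the rank-one term $(Se_n)(Se_n)^{T}$ being monotone in $b$ in the Loewner order, and it yields the explicit constant $\rho=1$. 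The paper proceeds quite differently: it conjugates the LMI by the matrix $M_K=\sum_{i=1}^nk_iJ_n^{i-1}$, which turns the state-feedback form $J_n-b\,e_nK^{T}$ into the output-injection form $J_n-b\,Ke_1^{T}$, and then builds $S$ and $K$ by induction on the dimension: writing $K=(k_1,L^{T})^{T}$, a change of variables $A_\Omega$ with $L=-k_1\Omega$ reduces the problem to choosing $\Omega$ so that $J_{n-1}+\Omega\tilde e_1^{T}$ is Hurwitz and then taking $k_1$ large enough — the Gauthier--Kupka style argument of \cite{GK-1994} and \cite{CS-2010}. Both proofs establish the uniformity in $b\ge\underline b$ without an upper bound; yours is shorter and quantitatively sharper (explicit $\rho$ and a cleanly isolated mechanism for the unbounded range of $b$), at the price of invoking standard Riccati/LQR theory, whereas the paper's argument is self-contained linear algebra plus induction and stays within the LMI bookkeeping it manipulates elsewhere (e.g.\ the scalings by $D^{\rf}_\eta$). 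Your closing LMI-level remark with $P=S^{-1}$ is consistent with this and could serve as an alternative elementary formulation if you wished to avoid citing Riccati existence results.
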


With an obvious perturbation argument, we immediately derive the following corollary: 
\begin{proposition}\label{prop:Krs1}
Using the notation of Proposition~\ref{prop:LMI0}, there exist $\rho_0, C_0
\in \RR_+^*$, a real symmetric positive definite $n\times n$ matrix
$S$ and $K\in\mathbb{R}^n$ so that \beq\label{eq:LMI2} \Big(a
D_{\rf}+J_n-b\ e_nK^T\Big)^TS+S\Big(a D_{\rf}+J_n-b\ e_nK^T\Big)\leq -\rho_0
Id_n,\quad \forall a\in[-C_0,C_0],\ \ b\geq \underline{b}.
\eeq
\end{proposition}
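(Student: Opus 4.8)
The plan is to keep the very same $S$ and $K$ furnished by Proposition~\ref{prop:LMI0} and to treat $aD_{\rf}$ as a perturbation of $J_n-b\,e_nK^{T}$ that is small when $|a|$ is small. First I would expand the left-hand side of \eqref{eq:LMI2}: since $D_{\rf}=\diag(r_i)_{1\le i\le n}$ is diagonal, hence symmetric, one has $D_{\rf}^{T}=D_{\rf}$, so that for every $a\in\RR$ and $b\ge\underline b$,
\begin{multline*}
\Pt{aD_{\rf}+J_n-b\,e_nK^{T}}^{T}S+S\Pt{aD_{\rf}+J_n-b\,e_nK^{T}}\\
=\Pt{J_n-b\,e_nK^{T}}^{T}S+S\Pt{J_n-b\,e_nK^{T}}+a\Pt{D_{\rf}S+SD_{\rf}}.
\end{multline*}
By Proposition~\ref{prop:LMI0}, the sum of the first two terms on the right is bounded above by $-\rho\,\Id_n$, \emph{uniformly} in $b\ge\underline b$.

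It then remains to dominate the last summand. Set $M:=\Vert D_{\rf}S+SD_{\rf}\Vert$, the spectral norm of this fixed symmetric matrix; then, as an inequality between quadratic forms, $a\Pt{D_{\rf}S+SD_{\rf}}\le |a|\,M\,\Id_n$. Hence, for all $b\ge\underline b$ and all $a\in\RR$,
\[
\Pt{aD_{\rf}+J_n-b\,e_nK^{T}}^{T}S+S\Pt{aD_{\rf}+J_n-b\,e_nK^{T}}\le-\big(\rho-|a|M\big)\,\Id_n .
\]
If $M=0$ the conclusion holds with any $C_0>0$ and $\rho_0=\rho$; otherwise I would take $C_0:=\rho/(2M)$, so that $\rho-|a|M\ge\rho/2$ for every $a\in[-C_0,C_0]$, and the claim follows with $\rho_0:=\rho/2$ and the same $S$, $K$.

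I do not expect any real obstacle here. The only two points worth recording are that the perturbation $aD_{\rf}$ contributes a \emph{symmetric} term to the Lyapunov inequality — which is exactly why one notes that $D_{\rf}$ is diagonal — and that both $\rho$ (coming from Proposition~\ref{prop:LMI0}) and $M$ are independent of $b$, since $D_{\rf}$, $S$ and $K$ do not depend on $b$; therefore $C_0$ and $\rho_0$ are uniform in $b$, as required. Alternatively, one could argue by continuity and compactness: the left-hand side of \eqref{eq:LMI2} is affine in $a$ and, at $a=0$, is bounded above by $-\rho\,\Id_n$ for every $b\ge\underline b$, hence remains below $-\rho/2\,\Id_n$ on a whole neighbourhood $[-C_0,C_0]$ of $a=0$; but the explicit estimate above is simpler and yields concrete constants.
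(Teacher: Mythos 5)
Your proof is correct and is precisely the ``obvious perturbation argument'' the paper invokes without spelling out: keep the $S$, $K$ and $\rho$ of Proposition~\ref{prop:LMI0}, absorb the symmetric term $a\left(D_{\rf}S+SD_{\rf}\right)$ into the margin $-\rho\,\Id_n$, and shrink $C_0$ accordingly. Your explicit constants $C_0=\rho/(2M)$, $\rho_0=\rho/2$ and the remark on uniformity in $b$ are exactly what is needed; no gap.
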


We apply the above proposition to derive ISS properties of
\eqref{eq:der3} and we prove the following proposition.
\begin{proposition}\label{prop:linear}
Consider the dynamics given in \eqref{eq:der2} and $D^\rf_\eta$ defined in
\eqref{eq:dlam}. Then there exists $K\in \mathbb{R}^n$ and $\eta_1>0$ such that, for
$\eta\geq \eta_1$, the state feedback $u=-K^TD^\rf_\eta y$ provides the
following estimate:
\begin{equation}\label{eq:y0}
\vert y_i(s)\vert\leq \frac
      {\max(1,\eta^{n-1})}{\eta^{n-i}}\exp(-C_S\rho_1\eta\ s)\Vert
      y(0)\Vert+\frac{C_S}{\eta^{n-i+1}}\max_{r\in [0,s]}\vert
      d(r)\vert,\quad \forall s\geq 0, \quad 1\leq i\leq n.
\end{equation}
where $C_S$ and $\rho_1$ are positive constants only depending on the lower
bound $\underline{b}$.
\end{proposition}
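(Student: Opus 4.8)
The plan is to apply Proposition~\ref{prop:Krs1} after rescaling the state by the dilation $D^\rf_\eta$, so that the feedback $u=-K^T D^\rf_\eta y$ becomes an ordinary linear feedback on the rescaled variable. Concretely, I would set $z(s)=D^\rf_\eta y(s)$ (recall $D^\rf_\eta=\diag(\eta^{r_i})$ with $r_i=n-i+1$). Differentiating and using the commutation relations \eqref{eq:dlam}, namely $D^\rf_\eta J_n (D^\rf_\eta)^{-1}=\eta J_n$ and $D^\rf_\eta e_n=\eta e_n$, together with the fact that $D^\rf_\eta$ commutes with the diagonal matrix $D_\rf$, the dynamics \eqref{eq:der2} with $u=-K^T z$ transforms into
\beq
z' = \eta\Big(\tfrac{a(s)}{\eta} D_\rf + J_n - b(s)\, e_n K^T\Big) z + \eta\, d(s)\, e_n .
\eeq
Thus $z$ satisfies a linear system whose drift matrix is $\eta$ times a matrix of exactly the form appearing in Proposition~\ref{prop:Krs1}, with the parameter $a$ there replaced by $a(s)/\eta$.

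Next I would choose $K$ and $S$ as furnished by Proposition~\ref{prop:Krs1} for the given $\underline{b}$, and pick $\eta_1$ so that $a(s)/\eta \in [-C_0,C_0]$ for all $\eta\geq \eta_1$; this is possible because $a$ is a fixed continuous function on the compact interval $[0,T]$, hence bounded, so $\eta_1:=\max(1,\|a\|_\infty/C_0)$ works. With this choice the quadratic Lyapunov function $W(z)=z^T S z$ satisfies, along trajectories,
\beq
W'(z) \le -\rho_0\,\eta\, \|z\|^2 + 2\eta\, z^T S e_n\, d(s),
\eeq
and a standard completion-of-squares / ISS estimate (using $\lambda_{\min}(S)\|z\|^2\le W\le \lambda_{\max}(S)\|z\|^2$) yields an exponential decay bound of the form $\|z(s)\|\le C_S' e^{-C_S\rho_1\eta s}\|z(0)\| + C_S''\max_{r\in[0,s]}|d(r)|$, where the constants depend only on $S$, hence only on $\underline{b}$. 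Note the disturbance gain here stays bounded in $\eta$ because the factor $\eta$ multiplying $d$ on the right-hand side is exactly cancelled by the factor $\eta$ in the decay rate when one integrates $\int_0^s e^{-c\eta(s-r)}\eta\,dr \le c^{-1}$.

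Finally I would translate the estimate on $z$ back to $y$ and then to components. Since $y=(D^\rf_\eta)^{-1} z$, we get $y_i = \eta^{-(n-i+1)} z_i$, and from $\|z(0)\| = \|D^\rf_\eta y(0)\| \le \max(1,\eta^{n-1})\cdot\eta \cdot$\,(something)\,$\cdot\|y(0)\|$ — more carefully, $\|D^\rf_\eta y(0)\| \le \eta^{\,n}\|y(0)\|$ when $\eta\ge 1$, but one must track the exponents to land on the stated $\frac{\max(1,\eta^{n-1})}{\eta^{n-i}}$ prefactor; this bookkeeping of powers of $\eta$ is the one slightly delicate point, and it is where I expect the main (though purely computational) obstacle to lie. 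The cleanest route is to write $|y_i(s)| = \eta^{-(n-i+1)}|z_i(s)| \le \eta^{-(n-i+1)}\|z(s)\|$, insert the bound on $\|z(s)\|$, and bound $\|z(0)\| \le \eta\cdot\max(1,\eta^{n-1})\|y(0)\|$ (since the largest weight is $r_1=n$, giving $\eta^n=\eta\cdot\eta^{n-1}$, and for $\eta<1$ one uses the smallest weight, giving the $\max(1,\cdot)$); combined with $\eta^{-(n-i+1)}\cdot\eta = \eta^{-(n-i)}$ this produces precisely the first term of \eqref{eq:y0}, and the disturbance term $\frac{C_S}{\eta^{n-i+1}}\max_r|d(r)|$ drops out directly from $\eta^{-(n-i+1)}\|z(s)\|$ and the bounded disturbance gain. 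Setting $\rho_1=\rho_0/(2\lambda_{\max}(S))$ and $C_S$ an appropriate function of $\lambda_{\min}(S),\lambda_{\max}(S),\|S e_n\|$ completes the argument.
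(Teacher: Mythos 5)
Your proposal is correct and follows essentially the same route as the paper: rescale the state by $D^\rf_\eta$, invoke the LMI of Proposition~\ref{prop:Krs1} (absorbing $a(s)/\eta$ into $[-C_0,C_0]$ for $\eta\geq \|a\|_\infty/C_0$), run the quadratic Lyapunov/ISS estimate, and convert back to $y$ componentwise; the paper merely phrases the factor $\eta$ as a time rescaling $\xi=\eta s$, which is equivalent to your explicit cancellation $\int_0^s e^{-c\eta(s-r)}\eta\,dr\le c^{-1}$, and your final bookkeeping of powers of $\eta$ matches the paper's $\|z_\eta(0)\|\le\eta\max(1,\eta^{n-1})\|y(0)\|$ and $\eta^{n-i+1}|y_i|\le\|z_\eta\|$.
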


\begin{proof}
Fix $\eta>0$ and set $z_\eta=D^\rf_\eta y$. From \eqref{eq:der3}, with
$u=-K_\eta y=-Kz_\eta$, one gets that $z_\eta$ verifies the following
dynamics, after setting the time $\xi:=\eta s$, \beq\label{eq:der4}
\frac{dz_\eta}{d\xi}=\frac{D^\rf_\eta y'}{\eta}=\left(
\frac{a(\xi/\eta)}{\eta}D_{\rf}+J_n- b(\xi/\eta)\ e_nK^T\right) z+
d(\xi/\eta)\ e_n.  \eeq Set $C_a=\sup_{s\geq 0}|a(s)|=\max_{t\in
  [0,T]}|a(t)|$.

One takes the Lyapunov function $V(z_\eta)=z^T_\eta Sz_\eta$
and takes its time derivative along \eqref{eq:der4}. Then, by taking $\eta\geq \frac{C_a}{C_0}$ and using Proposition~\ref{prop:LMI0}, one gets

\begin{align}
\frac{dV(z_\eta(\xi))}{d\xi}&\leq -\rho_0\Vert z_\eta(\xi)\Vert^2+2
\vert z^T_\eta(\xi)Se_n\vert\max_{r\in [0,\xi/\eta]}\vert d(r)\vert\nonumber\\
&\leq -\rho_0\Vert z_\eta(\xi)\Vert^2+C_S
\Vert z^T_\eta(\xi)\Vert\max_{r\in [0,\xi/\eta]}\vert d(r)\vert,\label{eq:derLyap}
\end{align}
where $C_S$ stands for ''any'' constant only depends on $S$, i.e. on $\underline{b}$ and $C_a$.  

One deduces that there exists constants $C_S$ such that 
$$
\Vert z_\eta(\xi)\Vert\leq \exp(-C_S\mu\xi)\Vert z_\eta(0)\Vert+
C_S\max_{r\in [0,\xi/\eta]}\vert d(r)\vert,\quad \forall \xi\geq 0.
$$
We now write the previous inequality in terms of $y(s)$. After noticing that 
$$
\eta^{n-i+1} \vert y_i(s)\vert=\vert (z_\eta)_i(\xi)\vert\leq \Vert z_\eta(\xi)\Vert,\ 1\leq i\leq n,\quad \Vert z_\eta(0)\Vert\leq \eta\max(1,\eta^{n-1})\Vert y(0)\Vert,
$$ 
one gets \eqref{eq:y0}.

\end{proof}
One can rewrite the previous argument using an LMI formulation. For that purpose, one needs a result similar to Proposition~\ref{prop:Krs1}, which involves the extra parameter $\eta$. More precisely, one easily shows the following proposition.
\begin{proposition}\label{prop:Krs2}
Let $n \in \NN$ and $\underline{b} \in \RR_+^*$. Then there exist
a positive constant $\rho$, a real symmetric positive definite
$n\times n$-matrix $S$ and a vector $K\in\mathbb{R}^n$ such that,
for every $C>0$ there exists $\eta_1$ so that, one has
 \beq\label{eq:LMI3}
\Big(a D_{\rf}+J_n-b\ e_nK_\eta^T\Big)^TS_\eta+S_\eta\Big(a D_{\rf}+J_n-b\ e_nK_\eta^T\Big)\leq -\mu_* \eta\ (D^\rf)^2_\eta,\quad  a\in[-C,C],\ \eta\geq\eta_1 \ b\geq \underline{b},
\eeq
where $S_\eta=D^\rf_\eta SD^\rf_\eta$ and
$K_\eta=D^\rf_\eta K$.
\end{proposition}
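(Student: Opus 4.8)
\textit{Proof proposal.} The plan is to deduce Proposition~\ref{prop:Krs2} from Proposition~\ref{prop:LMI0} by a conjugation argument under the dilation $D^\rf_\eta$, using the homogeneity relations \eqref{eq:dlam}. I would keep the very same $S$, $K$ and $\rho$ furnished by Proposition~\ref{prop:LMI0}, and fix once and for all $\mu_*:=\rho/2$; only $\eta_1$ will be allowed to depend on $C$.

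First I would compute the conjugate of the closed-loop matrix. Writing $M_\eta(a,b):=a D_{\rf}+J_n-b\,e_nK_\eta^T$ with $K_\eta=D^\rf_\eta K$, so that $e_nK_\eta^T=e_nK^TD^\rf_\eta$, and using that $D^\rf_\eta$ is diagonal (hence symmetric and commuting with $D_{\rf}$) together with $D^\rf_\eta J_n(D^\rf_\eta)^{-1}=\eta J_n$ and $D^\rf_\eta e_n=\eta e_n$ from \eqref{eq:dlam}, one gets
\[
D^\rf_\eta\, M_\eta(a,b)\,(D^\rf_\eta)^{-1}=a D_{\rf}+\eta\big(J_n-b\,e_nK^T\big)=:N_\eta(a,b).
\]
Since $S_\eta=D^\rf_\eta S D^\rf_\eta$ and $D^\rf_\eta$ is symmetric, substituting $M_\eta=(D^\rf_\eta)^{-1}N_\eta D^\rf_\eta$ yields
\[
M_\eta(a,b)^T S_\eta+S_\eta M_\eta(a,b)=D^\rf_\eta\Big(N_\eta(a,b)^T S+S\,N_\eta(a,b)\Big)D^\rf_\eta .
\]

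Next I would bound the bracket. Expanding,
\[
N_\eta(a,b)^T S+S\,N_\eta(a,b)=a\big(D_{\rf}S+SD_{\rf}\big)+\eta\big[(J_n-b\,e_nK^T)^TS+S(J_n-b\,e_nK^T)\big],
\]
where the $\eta$-term is $\le-\eta\rho\,\Id_n$ for all $b\ge\underline{b}$ by Proposition~\ref{prop:LMI0}, while $a(D_{\rf}S+SD_{\rf})\le |a|\,\Vert D_{\rf}S+SD_{\rf}\Vert\,\Id_n\le C\kappa_0\,\Id_n$ for $|a|\le C$, with $\kappa_0:=\Vert D_{\rf}S+SD_{\rf}\Vert$ a fixed constant. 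Hence $N_\eta(a,b)^TS+S\,N_\eta(a,b)\le(C\kappa_0-\eta\rho)\Id_n$. Choosing $\eta_1:=2C\kappa_0/\rho$, for $\eta\ge\eta_1$ one has $C\kappa_0-\eta\rho\le-\eta\rho/2=-\mu_*\eta$, and conjugating back by the symmetric positive definite $D^\rf_\eta$ (which preserves the ordering of symmetric matrices) together with $D^\rf_\eta\,\Id_n\,D^\rf_\eta=(D^\rf_\eta)^2=(D^\rf)^2_\eta$ gives \eqref{eq:LMI3} for all $a\in[-C,C]$, $b\ge\underline{b}$, $\eta\ge\eta_1$.

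I do not expect a genuine obstacle: the statement is essentially Proposition~\ref{prop:LMI0} transported under the dilation. The one point to be careful with is that the lower-order term $a D_{\rf}$ is \emph{not} rescaled by $\eta$ under the conjugation (it stays $a D_{\rf}$), which is precisely why it can be absorbed into $-\eta\rho\,\Id_n$ only by taking $\eta$ large, and hence why $\eta_1$ must depend on $C$; the other point is the elementary but easy-to-misstate fact that conjugating a matrix inequality by a symmetric positive definite matrix preserves its direction. One should also confirm that the symbol $(D^\rf)^2_\eta$ in \eqref{eq:LMI3} denotes $(D^\rf_\eta)^2=\diag(\eta^{2r_i})_{i=1}^n$.
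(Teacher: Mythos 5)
Your proof is correct and takes essentially the same route as the paper: conjugate by the dilation $D^\rf_\eta$, so that $J_n-b\,e_nK^T$ picks up a factor $\eta$ while the term $aD_{\rf}$ does not, and then absorb the latter by taking $\eta$ large, with $S$, $K$, $\rho$ fixed and only $\eta_1$ depending on $C$. The only cosmetic difference is that you absorb $aD_{\rf}$ directly from Proposition~\ref{prop:LMI0} via the explicit bound $\Vert D_{\rf}S+SD_{\rf}\Vert$, whereas the paper passes through Proposition~\ref{prop:Krs1} (equivalently, through the rescaled window $|a|\le \eta C_0$); the content is identical.
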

To see that, simply take $K_\eta=D^\rf_\eta K_1$ and multiply the LMI \eqref{eq:LMI2} on the left and on the right by $D^\rf_\eta$ yields \eqref{eq:LMI3}.

Using Proposition~\ref{prop:linear} and the fact that
$$
\lam(t)^{n-i+1}\vert x_i(t)\leq \vert y_i(s)\vert,\ 1\leq i\leq n,\ t\geq 0,
$$
we deduce PT-ISS-C for $x$ in any time $T>0$.

We gather in the following corollary our findings, which are similar to Theorem $1$ in \cite{Krstic2017}.
\begin{corollary}\label{cor:main-lin}
Consider the dynamics given in \eqref{eq:PCofI-0}. Let $a:[0,T]\to\mathbb{R}$ any  non negativecontinuous function such that $\int_t^Ta(\xi)d\ \xi>0$ for $t\in [0,T)$. Then set
\begin{equation}\label{eq:lam-s-1}
\lam(t)=\frac1{\int_t^Ta(\xi)d\xi}, \quad  s(t)=\int_0^t\lam(\xi)d\xi, \quad 0\leq t<T.
\end{equation}
There exists $K\in \mathbb{R}^n$ such that, for every $\eta\geq 1$, the state feedback 
\begin{equation}\label{eq:feed0}
u=-K^TD^\rf_{\eta\lam(t)}x(t),\quad t\geq 0,
\end{equation}
where $D^\rf_{\lam}$ is defined in\eqref{eq:dlam}, provides the following estimate, for every $t\geq 0$ and $1\leq i\leq n$,
\begin{equation}\label{eq:x0}
\vert x_i(t)\vert\leq\frac1{(\eta\lambda(t))^{n-i+1}}\Big(\eta\max(1,\eta^{n-1})
\exp\left(-C_S\mu\eta\ s(t)\right)\Vert x(0)\Vert+
C_S\max_{r\in [0,t]}\vert d(t)\vert
\Big).
\end{equation}
where $C_S$ and $\mu$ are positive constants only depending on the lower bound $\underline{b}$.
\end{corollary}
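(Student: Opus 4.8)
The plan is to obtain this corollary as a direct translation of Proposition~\ref{prop:linear} through the change of coordinates and time introduced in \eqref{eq:COC}--\eqref{eq:der2}. Given the data $a,\lam,s$ of \eqref{eq:lam-s-1}, put $y(s)=D^\rf_{\lam(t)}x(t)$. Since $A(t)=\int_t^Ta(\xi)\,d\xi$ is positive on $[0,T)$ with $\lam=1/A$, the normalisation \eqref{eq:lam0} gives $\dot\lam/\lam^2=a$, and the computation \eqref{eq:der0}--\eqref{eq:der2} shows that, along any trajectory of \eqref{eq:PCofI-0}, the curve $y$ solves \eqref{eq:der2}. Using the multiplicativity of the dilations, $D^\rf_{\eta\lam(t)}=D^\rf_\eta D^\rf_{\lam(t)}$ (read off from \eqref{eq:dlam}), the proposed control rewrites as $u=-K^TD^\rf_{\eta\lam(t)}x(t)=-K^TD^\rf_\eta\,y(s)$, which is exactly the feedback $u=-K^TD^\rf_\eta y$ studied in Proposition~\ref{prop:linear}; hence the closed-loop $y$-equation is the one analysed there.

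I would then simply invoke Proposition~\ref{prop:linear}: it furnishes $K\in\RR^n$, $\eta_1>0$, and positive constants $C_S,\rho_1$ (depending only on $\underline b$ and on $C_a:=\sup_{[0,T]}|a|$) such that, for $\eta\ge\eta_1$, the bound \eqref{eq:y0} holds for $y$. Reading \eqref{eq:y0} back on $x$ uses only elementary identities: componentwise, $y(s)=D^\rf_{\lam(t)}x(t)$ means $|y_i(s)|=\lam(t)^{n-i+1}|x_i(t)|$; at $t=0$, $\|y(0)\|=\|D^\rf_{\lam(0)}x(0)\|\le c_0\|x(0)\|$ with $c_0:=\max\bigl(1,\lam(0)^n\bigr)$; and the power arithmetic $\frac1{\eta^{n-i}\lam(t)^{n-i+1}}=\frac{\eta}{(\eta\lam(t))^{n-i+1}}$, $\frac1{\eta^{n-i+1}\lam(t)^{n-i+1}}=\frac1{(\eta\lam(t))^{n-i+1}}$. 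Substituting these into \eqref{eq:y0} and dividing by $\lam(t)^{n-i+1}$ yields \eqref{eq:x0} (with $\mu:=\rho_1$, and the fixed factor $c_0$ absorbed into the generic constant, following the paper's convention for $C_S$); the disturbance term transfers unchanged since $t\mapsto s(t)$ is an increasing $C^1$ bijection of $[0,T)$ onto $[0,\infty)$, so that $\max_{r\in[0,t]}|d(r)|$ equals $\max_{r\in[0,s(t)]}|d(r)|$ up to relabelling.

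I do not expect a genuine obstacle: beyond Proposition~\ref{prop:linear} the corollary is pure bookkeeping of the powers of $\eta$ and $\lam(t)$ appearing when one passes between $x$, $y=D^\rf_{\lam}x$ and $z_\eta=D^\rf_\eta y$. The one point deserving a word of care is the range of $\eta$: the threshold $\eta_1$ in Proposition~\ref{prop:linear} arises (see its proof) from the requirement $C_a/\eta\le C_0$, so that Proposition~\ref{prop:Krs1} produces a uniform negative margin $-\rho_0\,\Id_n$ in the Lyapunov inequality; thus the statement is to be understood for $\eta\ge\eta_1=\max(1,C_a/C_0)$, and it reduces to ``every $\eta\ge1$'' exactly when $C_a\le C_0$. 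With this understood, the estimate \eqref{eq:x0} follows, and it recovers Theorem~1 of \cite{Krstic2017} in the case of a linear time-varying homogeneous feedback.
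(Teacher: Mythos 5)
Your proposal is correct and follows essentially the same route as the paper, which obtains the corollary directly from Proposition~\ref{prop:linear} via the identity $\lam(t)^{n-i+1}\vert x_i(t)\vert=\vert y_i(s)\vert$ and the same bookkeeping in the powers of $\eta$ and $\lam(t)$. Your remarks on absorbing $\max(1,\lam(0)^n)$ into the constant and on the threshold $\eta\geq\eta_1=\max(1,C_a/C_0)$ are, if anything, slightly more careful than the paper's own statement.
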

\begin{remark}
The case where $a=(T-t)^m$ with $m$ positive integer corresponds to \cite{Krstic2017} and one can choose another $a$ which goes faster to $0$ as $t$ tends to $T$, for instance
$exp(-1/T-t)/(T-t)^2$, which yields to $\lambda(t)=exp(-1/T-t)$ and then faster rates of convergence. 
\end{remark}
One should now refer to Section $3.2$ in \cite{Krstic2017} which provides the advantages and limits of such a feedback regulation. In the sequel, we only insist on what we believe are the advantages of our approach with respect to that of \cite{Krstic2017} as well as the inherent limitations in terms of robustness of feedback strategies based on time-varying homogeneity. 
\begin{remark}\label{rem:compare0}
The disturbance we consider here has a simpler expression than that of \cite{Krstic2017}, the latter being bounded by $\vert d(t)\vert \psi(x)$, with $d$ any measurable function on $[0,\infty)$ and $\psi\geq 0$ a known scalar-valued continuous function. To lighten the presentation, we do not consider the function $\psi$ since the analysis in this case is entirely similar to the above by using Eq. $(25)$ in \cite{Krstic2017}.
\end{remark}
\begin{remark}\label{rem:compare}
Let us compare our results with those obtained in \cite{Krstic2017}. First of all, we recover at once the main result of that reference (Theorem $2$ and Inequality $(79)$)
 by choosing  the function $a(\cdot)$ appearing in the theorem  to be equal to $C(T-t)^{m-1}$ where $C$ is a positive constant and $m$ a positive integer. 
 We have though slightly better results since we can prescribe the rate of exponential decrease as well as the estimate on the error term modeled by $f(\cdot)$ thanks to the occurence of the parameter $\eta$ in our findings. Indeed the choice of the function $\lam(\cdot)$ in \cite{Krstic2017}
 (called $\mu(\cdot)$ in Equation $(30)$ of \cite{Krstic2017})
 must be specific because it relies on the fact that time derivatives of $\mu(\cdot)$ must be expressed as polynomials in $\lam(\cdot)$, cf. Lemmas $2$ to $4$ in the reference therein. Instead, using our presentation, it turns out there is more freedom in the choice of $\lambda$. More importantly, our presentation yields simpler proofs of convergence with a unique time scale for variables and everything boiling down to LMIs.
Another advantage of the more transparent structure of the feedback  is given in the subsequent remarks, where we are able to explain in a very explicit manner
the limitations of the present feedback law, as they are suggested 
in the discussion $3.2$ in \cite{Krstic2017}, as well as in the conclusion of that reference.
\end{remark}
\begin{remark}\label{rem:compare2}
As noticed in \cite{Krstic2017}, the  linear feedback defined in \eqref{eq:feed0} is not suitable if it is subject to measurement noise on $x(\cdot)$. 
More precisely, this amounts to have instead of \eqref{eq:feed0} a feedback 
$\tilde u$ given by 
$$
\tilde u(t)=-K^TD^\rf_{\eta\lam(t)}\big(x(t)+d(t)\big)=u(t)-K^TD^\rf_{\eta\lam(t)}d(t), \quad t\geq 0,
$$ 
i.e., with a disturbance $d$ in 
\eqref{eq:PCofI-0} of the form $\eta\lam(t)\max(1,(\eta\lam)^{n-1}(t))\Vert d(t)\Vert$. We can only derive from Corollary~\ref{cor:main-lin} 
the following estimate, for every
 $t\geq 0$ and $1\leq i\leq n$,
\begin{equation}\label{eq:x2}
\vert x_i(t)\vert\leq \frac{\eta\max(1,\eta^{n-1})}{(\eta\lam)^{n-i+1}(t)}\exp\left(-C_S\mu\eta\ s(t))\right)\Vert x(0)\Vert+C_S \frac{\max(1,(\eta\lam)^{n-1}(t))\max_{r\in [0,t]}\Vert d(r)\Vert}{(\eta\lam)^{n-i}(t)}.
\end{equation}
The right-hand side blows up as $t$ tends to $T$, except for 
$i=1$, with a loss of regulation accuracy (we do not have anymore convergence to zero but to an arbitrary small neighborhood).

On the other hand, by choosing $\eta$ of the amplitude of $\lam(t)$ as $t$ tends to $T$, we deduce at once from \eqref{eq:x2} the following corollary. 
\begin{corollary}\label{cor:minor-lin}
With the notations of Corollary~\ref{cor:main-lin}, assume that 
one feeds the dynamics given in \eqref{eq:PCofI-0} with the pertubed feedback $\tilde u(t)=-K^TD^\rf_{\eta\lam(t)}\big(x(t)+d(t)\big)$.
Then for every time $T'<T$, there exists $\eta>0$ such that, one has that 
\begin{equation}\label{eq:x0}
\max_{t\in [0,T']}\vert x_i(t)\vert\leq \eta \Vert x(0)\Vert+C_{T',T}\max_{t\in [0,T']}\vert d(t)\vert,\quad \forall t\geq 0,\quad 1\leq i\leq n,
\end{equation}
where $C_{T',T}$ is a positive constant, tending to infinity as $T'$ tends to $T$.
\end{corollary}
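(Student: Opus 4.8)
# Proof Plan for Corollary~\ref{cor:minor-lin}

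The plan is to derive the estimate \eqref{eq:x0} directly from the perturbed version of estimate \eqref{eq:x2} by restricting attention to the compact time interval $[0,T']$ and then choosing the free parameter $\eta$ appropriately. First I would recall from Corollary~\ref{cor:main-lin} and Remark~\ref{rem:compare2} that, under the perturbed feedback $\tilde u(t)=-K^TD^\rf_{\eta\lam(t)}(x(t)+d(t))$, the closed-loop trajectory satisfies \eqref{eq:x2} for all $t\geq 0$ and $1\leq i\leq n$. Since $T'<T$ is fixed, the function $\lam(\cdot)$ is continuous and strictly positive on the compact interval $[0,T']$, hence bounded above and below there: set $\lam_{\min}=\min_{t\in[0,T']}\lam(t)>0$ and $\lam_{\max}=\max_{t\in[0,T']}\lam(t)<\infty$. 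These quantities depend only on $T'$ and $T$ (through the fixed choice of $a(\cdot)$), which is exactly the dependence claimed for $C_{T',T}$.

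Next I would bound each of the two terms on the right-hand side of \eqref{eq:x2} uniformly over $t\in[0,T']$. For the first term, the exponential factor $\exp(-C_S\mu\eta\, s(t))$ is at most $1$, and $(\eta\lam(t))^{n-i+1}\geq (\eta\lam_{\min})^{n-i+1}\geq \eta\lam_{\min}$ when $\eta\lam_{\min}\geq 1$ and $i\leq n$; combined with $\eta\max(1,\eta^{n-1})$ in the numerator, after fixing $\eta$ large enough (depending only on $\lam_{\min}$, hence only on $T',T$) this first term is bounded by a constant multiple of $\eta\|x(0)\|$ — here the linear dependence on $\eta$ in the statement absorbs the residual powers of $\eta$, or one simply notes the first term is $O(\eta)\|x(0)\|$ uniformly. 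For the second term, $\frac{\max(1,(\eta\lam(t))^{n-1})}{(\eta\lam(t))^{n-i}}$ is bounded above by a quantity of the form $C'\max(1,(\eta\lam_{\max})^{n-1})$, a finite constant once $\eta$ and $T'$ are fixed; calling everything multiplying $\max_{r\in[0,T']}\|d(r)\|$ by the name $C_{T',T}$ gives the second term of \eqref{eq:x0}. Taking the maximum over $t\in[0,T']$ on the left then yields the claimed inequality.

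The only real point requiring care — and what I would flag as the main obstacle — is the bookkeeping of how the constants depend on $\eta$ versus on $(T',T)$, so that the final statement genuinely has the asserted form (linear coefficient $\eta$ on $\|x(0)\|$, and a constant $C_{T',T}$ blowing up as $T'\to T$ on the disturbance term). The blow-up of $C_{T',T}$ as $T'\to T$ comes precisely from $\lam_{\max}=\max_{t\in[0,T']}\lam(t)\to\infty$ as $T'\to T$, since $\lam(t)\to\infty$ as $t\to T$; so the factor $\max(1,(\eta\lam_{\max})^{n-1})$ diverges, giving the claimed behavior of $C_{T',T}$. One should also observe that the suggestion in the statement to ``choose $\eta$ of the amplitude of $\lam(t)$ as $t$ tends to $T$'' is what makes the first term controlled: roughly $\eta\asymp\lam_{\max}$ keeps $\eta\max(1,\eta^{n-1})/(\eta\lam)^{n-i+1}$ from exploding on $[0,T']$. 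Since no convergence or sharp rate is asked, everything reduces to these elementary estimates on a compact interval, and no delicate analysis is needed beyond the careful choice and accounting of $\eta$.
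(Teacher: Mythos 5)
Your route is the same as the paper's: the paper's entire proof of Corollary~\ref{cor:minor-lin} is the one-line remark that it follows ``at once'' from \eqref{eq:x2} once one restricts to $[0,T']$ and uses the extra parameter $\eta$, and your specialization of \eqref{eq:x2} to $[0,T']$ --- with $\lam_{\min}=\lam(0)$, $\lam_{\max}=\lam(T')$ (recall $\lam$ is increasing), and the observation that the blow-up of $C_{T',T}$ as $T'\to T$ comes precisely from the worst case $i=n$, where the disturbance factor $\max\bigl(1,(\eta\lam(t))^{n-1}\bigr)$ is controlled by $(\eta\lam(T'))^{n-1}\to\infty$ --- is exactly that argument written out. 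So the decomposition and the source of $C_{T',T}$ are correct and faithful to the paper.

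The one place where your write-up goes wrong is the point you yourself flag: the bookkeeping of the coefficient on $\Vert x(0)\Vert$. From \eqref{eq:x2} with $\eta\geq 1$, the worst case of the first term is $i=n$ at $t=0$ (where $s(0)=0$, so the exponential gives no help), and the coefficient is $\eta\max(1,\eta^{n-1})/(\eta\lam(0))=\max(1,\eta^{n-1})/\lam(0)$. This is \emph{not} $O(\eta)\Vert x(0)\Vert$ uniformly in $\eta$ when $n\geq 3$, and ``fixing $\eta$ large enough'' makes it worse, not better; moreover taking $\eta\asymp\lam_{\max}$ also inflates the disturbance constant, since $C_{T',T}\sim C_S(\eta\lam(T'))^{n-1}$. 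So neither of your suggested fixes (``the linear dependence on $\eta$ absorbs the residual powers'', ``choose $\eta\asymp\lam_{\max}$ to keep the first term from exploding'') works as written: what the computation actually yields on $[0,T']$ is a bound with coefficient $\max(1,\eta^{n-1})/\lam(0)$ on $\Vert x(0)\Vert$, i.e.\ a constant depending on $\eta$, $n$ and $T$; to get the literal coefficient $\eta$ of the statement one needs $\eta^{n-2}\leq\lam(0)$ (automatic only in low dimension), which is in tension with choosing $\eta$ large. What really makes the corollary work is simply that $\lam$ is bounded on the compact interval $[0,T']$; the role of $\eta$ is not to tame the first term. This imprecision mirrors the looseness of the paper's own one-line deduction, but if you want a self-contained proof you should either state the estimate with a generic constant in front of $\Vert x(0)\Vert$ or make the constraint on $\eta$ explicit.
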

The previous result of semi-global nature has been already suggested in \cite{Krstic2017} and has been obtained in the present paper thanks to the extra parameter $\eta$. In particular, it follows the idea that in order to obtain estimates for prescribed time control  in time $T'$, one can use the previous strategy of prescribed time control  in a time $T>T'$ and then use \eqref{eq:x0}. This estimate is a direct result of the use of the time-varying function $\lam(\cdot)$ but, as regards measurement noise it is definitely not satisfactory.  
\end{remark}
\begin{remark}\label{rem:compare3}
Looking back at \eqref{eq:feed0-0}, the most natural choice is a linear feedback and it has been (essentially) first addressed in 
\cite{Krstic2017} and revisited here. One can also use other feedback laws, especially those providing finite-time stability (in the scale time $s$). If there is measurement noise, i.e., of the type $x+d$, the feedback implemented in \eqref{eq:der2} will be $u(s)=F(y(s)+D^\rf_{\lambda(t(s))}d(s))$
and it is likely that one can find appropriate perturbations so that the last coordinate of $x$ will become unbounded as $t$ tends to $T$. Already, in the linear case, for a double integrator for instance, it is easy to choose bounded disturbances $d$ in the case where $a(t)=1$ such that $y_2(s)$ has the magnitude of $\lambda^2(s)$, and then $x_2(t)$ has the magnitude of $\lambda(t)$ as $t$ tends to $T$. It is not difficult to extend that remark to any feedback law $F$ which is differentiable at zero. Such a fact prevents to get any type of ISS results and it indicates that no property
such as $(PT-ISS)$ can hold in presence of measurement noise.  This is why time-varying homogeneity based feedbacks are not, in our opinion, well-suited for prescribed-time stabilization in presence of measurement noise. One has to follow another approach and this is the purpose of the next section.
\end{remark}

\subsection{Fixed-time feedbacks}\label{ss:FTF}
We now consider feedback laws in \eqref{eq:feed0-0} which 
will provide fixed-time stabilisation for \eqref{eq:der2}
 under the assumption of a priori knowledge on the uncertainties bounds. More precisely, we will simply show that the feedback law provided in \cite[Theorem 5]{HLCH-2017}
does the job and we have the following.
\begin{proposition}\label{prop:fixed1}
Set $\vep=\pm$. Assume that there exists $\alpha\in (0,1/n)$, $c>0$, two continuous feedback laws $u_\vep:\mathbb{R}^n\to \mathbb{R}$ and two 
$C^1$ functions 
$V_\vep:\mathbb{R}^n\to \mathbb{R}_+$, which are positive definite, $\rf$-homogeneous of degree larger than one and 
such that one has:
\begin{description}
\item[$(a)$] $u_\vep$ stabilizes the $n$-th order pure chain of integrator $\dot x=J_nx+ue_n$ in finite-time and along the trajectories of the corresponding closed-loop system, one has
\begin{equation}\label{eq:vep0}
\dot V_\vep\leq cV_\vep^{1+\vep\alpha};
\end{equation}
\item[$(b)$] for every $x\in \mathbb{R}^n$, the following geometric condition holds true:
\begin{equation}\label{eq:geom}
\frac{\partial V_\vep}{\partial x_n}u_\vep(x)\leq 0
\hbox{ and }u_\vep(x)=0\Rightarrow \frac{\partial V_\vep}{\partial x_n}=0.
\end{equation}
Define the feedback law
$\omega_0:\mathbb{R}^n\to \mathbb{R}$ as
\begin{equation}\label{eq:feed-fix0}
\omega_0(x)=\left\{
 \begin{array}{lll}
 u_+(x),&\hbox{ if }& V_-(x)>1,\\  
 u_-(x),&\hbox{ if } & V_-(x)\leq 1.
\end{array}
\right.
\end{equation}
\end{description}
Consider the dynamics \eqref{eq:der2} and assume that 
$b(\cdot)$ verifies \eqref{eq:b0} and $\Vert d\Vert_{\infty}\leq D$ for some positive constant $D$. Then the feedback law $\omega:\mathbb{R}^n\to \mathbb{R}$ defined as 
\begin{equation}\label{eq:feed-fix}
\omega(x)=\frac1{\underline{b}}\Big(\omega_0(x)+D sgn(\omega_0(x))\Big),
\end{equation}
where $\omega_0$ is defined in \eqref{eq:feed-fix0} and $sgn$ stands for the set-valued function ``sign'', globally stabilises \eqref{eq:der2} in fixed-time.
\end{proposition}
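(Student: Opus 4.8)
The plan is to reduce the fixed-time stabilization of \eqref{eq:der2} to two ingredients: (i) the feedback $\omega$ dominates the matched uncertainty $d$ and the uncertain gain $b$, so that in the new time $s$ the closed-loop behaves like the \emph{pure} chain of integrators fed by $\omega_0$, and (ii) the switching feedback $\omega_0$ built from the two homogeneous feedbacks $u_+$ and $u_-$ produces finite-time convergence from the ``far'' region $\{V_-(x)>1\}$ in a time uniformly bounded over all initial conditions, together with finite-time convergence to the origin once inside $\{V_-(x)\le 1\}$. Adding the term $a(s)D_{\rf}y$ to the dynamics will be handled by homogeneity considerations, exactly as the extra linear term was absorbed in the linear case.

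First I would verify the domination step. Along \eqref{eq:der2} with $u=\omega(y)$, write $b\,\omega + d = \tfrac{b}{\underline b}\big(\omega_0(y)+D\,\mathrm{sgn}(\omega_0(y))\big)+d$. Using $b\ge\underline b$ and $|d|\le D$, the sign-multivalued term is chosen precisely so that $b\,\omega+d = \beta(s)\,\omega_0(y)$ for some measurable $\beta(s)\ge 1$ when $\omega_0(y)\ne 0$ (and the differential inclusion is consistent when $\omega_0(y)=0$); here I would invoke the geometric condition \eqref{eq:geom} to ensure $\tfrac{\partial V_\vep}{\partial x_n}$ vanishes wherever $\omega_\vep$ does, so that the product $\tfrac{\partial V_\vep}{\partial x_n}(b\,\omega+d)$ is well-defined and has the right sign. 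Thus, modulo a positive time-varying factor $\beta(s)$ in front of $e_n$, the $y$-dynamics is $y'=a(s)D_{\rf}y+J_ny+\beta(s)\,\omega_0(y)e_n$.

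Next I would treat the two phases. In the phase $V_-(y)>1$, the feedback is $u_+$; differentiating $V_+$ along the dynamics and using $(a)$, namely $\dot V_+\le c\,V_+^{1+\alpha}$ \emph{for the pure chain}, plus the fact that $\tfrac{\partial V_+}{\partial x_n}\beta(s)\omega_0(y)=\beta(s)\tfrac{\partial V_+}{\partial x_n}u_+(y)\le 0$ by \eqref{eq:geom} and $\beta\ge1$, I get that the decay inequality from \cite{HLCH-2017} survives (the extra factor $\beta$ only helps, since the ``dangerous'' term has a favorable sign), while the term $a(s)D_{\rf}y$ contributes a lower-order perturbation because $V_+$ is $\rf$-homogeneous of degree $>1$ and $D_{\rf}y$ is the Euler vector field, so its contribution to $\dot V_+$ is a multiple of $V_+$ itself — dominated on $\{V_+\ge{}$const$\}$ by the stronger negative term when $\alpha>0$ after possibly enlarging $c$, by Lemma~\ref{le:notreL1}-type reasoning (degree $+\alpha>0$ gives fixed-time). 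This yields a uniform bound on the time to enter $\{V_-(y)\le 1\}$. In the phase $V_-(y)\le 1$ the feedback is $u_-$; now $\dot V_-\le c\,V_-^{1-\alpha}$ for the pure chain, again the sign condition makes the $\beta$-factor harmless, and the Euler term $a(s)D_{\rf}y$ contributes $\propto V_-$, which on $\{V_-\le1\}$ is dominated by $V_-^{1-\alpha}$; since the homogeneity degree of the closed loop is $-\alpha<0$ this gives finite-time convergence to the origin, with settling time bounded by a function of the level $V_-=1$, hence uniformly. Combining the two phases gives a global settling time independent of the initial condition.

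The main obstacle I anticipate is the phase $V_-(y)>1$: there one must check that the \emph{degree-$+\alpha$} homogeneous structure genuinely dominates the added term $a(s)D_{\rf}y$ uniformly in $s$ (recall $a$ is only bounded, $C_a=\sup_s|a(s)|<\infty$), and that the switching surface $\{V_-=1\}$ is crossed transversally enough that solutions are well-defined and do not chatter — this is why $V_-$ (not $V_+$) is used as the switching function while $u_+$ is used as the controller, a subtlety I would need to track carefully. The rest is bookkeeping with homogeneous Lyapunov functions of the type carried out in \cite{HLCH-2017} and \cite{LREPP-2018}, invoking Lemma~\ref{le:notreL1} to convert the sign of the homogeneity degree into the fixed-time conclusion.
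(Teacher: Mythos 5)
Your overall route is the same as the paper's: a two-phase analysis switching on the level set $\{V_-=1\}$, use of the geometric condition \eqref{eq:geom} to show that the uncertain gain $b/\underline{b}\geq 1$, the disturbance $d$ and the added term $D\,\mathrm{sgn}(\omega_0)$ only reinforce the decay coming from hypothesis $(a)$ (your factorisation $b\omega+d=\beta\,\omega_0$ with $\beta\geq1$ is just a repackaging of the paper's decomposition $(\tfrac b{\underline b}-1)\omega_0+\mathrm{sgn}(\omega_0)(\tfrac b{\underline b}D-|d|)$), and Lemma~\ref{le:notreL1} (or direct integration of the differential inequalities) to convert the degrees $\pm\alpha$ into fixed-time/finite-time conclusions. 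The paper additionally introduces $E:=\min_{V_-=1}V_+$ and the nested sets $S_1\subset S_2$ so that the $V_+$-estimate can be run everywhere outside $S_2$, and it disposes of your chattering worry by showing $\dot V_-<0$ on $S_2$, so $S_2$ is forward invariant once reached.

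The genuine gap is exactly the point you flag and then leave unresolved: the absorption of the Euler term $a(s)\langle\nabla V_\vep(y),D_\rf y\rangle$. By Euler's identity this term equals $a(s)\,(\deg V_\vep)\,V_\vep(y)$, so it is a genuinely destabilising term proportional to $V_\vep$, and dominating it by $\tfrac c2 V_+^{1+\alpha}$ on the bounded transition region $\{V_+\gtrsim E\}$, or by $\tfrac c2V_-^{1-\alpha}$ on $\{V_-\leq1\}$, requires the coefficient $C_a=\sup_s|a(s)|$ to be small compared with $c$; it does not follow from degree counting alone, and ``after possibly enlarging $c$'' is not available since $c$ is fixed by hypothesis $(a)$. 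The paper closes this gap with a preliminary normalisation that your proposal lacks: replacing $y(s)$ by $D^\rf_\mu y(s/\mu)$ with $\mu\geq1$ leaves the form of \eqref{eq:der2} invariant (thanks to $D^\rf_\mu J_n(D^\rf_\mu)^{-1}=\mu J_n$ and $D^\rf_\mu e_n=\mu e_n$) while replacing $a$ by $a/\mu$, so $C_a$ can be made as small as needed relative to $c$ before the two-phase Lyapunov argument is run. Without this (or an equivalent device), the claimed inequalities $\dot V_\pm\leq-\tfrac c2V_\pm^{1\pm\alpha}$ can fail near the switching region when $C_a$ is large, and your argument does not go through as written.
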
  
Here the $sgn$ function makes the closed-loop system corresponding to \eqref{eq:der2} and $u=\omega$ a differential inclusion and its trajectories must be understood in the Filippov sense, cf. \cite{Fil}. Note also that examples of feedbacks
$u_\vep$ and the Lyapunov functions $V_\vep$ verifying Items $(a)$ and $(b)$ are also provided in Definition~\ref{def-Hong-controller} and Proposition~\ref{prop:Hong-gen} given in the next section.
\begin{proof}
First of all notice that, by multiplying \eqref{eq:der2} by $D^\rf_{\mu}$ with $\mu\geq 1$ and considering the new state $D^\rf_{\mu}y(s/\mu)$, $C_a=\sup_{s\geq 0}|a(s)|$ becomes $C_a/\mu$ and hence arbitrary small.

Set $E:=\min_{V_-(x)=1}V_+(x)>0$ and consider the sets 
$$
S_1 =\{x\in\mathbb{R}^n\ :\ V_+(x)\leq E\},\
S_2 =\{x\in\mathbb{R}^n\ :\ V_-(x)\leq 1\}.
$$
By definition of $E$, we have that $S_1\subset S_2$.
We claim that the closed-loop system corresponding to \eqref{eq:der2} and $u=\omega$ is globally fixed-time stable with respect to $S_2$. For that purpose, we compute the time derivative of $V_+$ along the trajectories outside $S_2$ and get
\begin{eqnarray}
\dot V_+&=&a\langle \nabla V_+(y),D_{\rf}y\rangle+
\sum_{i=1}^{n-1}\frac{\partial V_\vep}{\partial x_i}x_{i+1}
+\frac{\partial V_\vep}{\partial x_n}(b\omega+d),\nonumber\\
&\leq& a\langle \nabla V_+(y),D_{\rf}y\rangle-cV_+^{1+\al}
+\frac{\partial V_\vep}{\partial x_n}\Big((\frac{b}{\underline{b}}-1)\omega_0+
sgn(\omega_0)(\frac{b}{\underline{b}}D-\vert d\vert)\Big),
\nonumber\\
&\leq& \frac c2V_+-cV_+^{1+\al}\leq -\frac{c}2V_+^{1+\al}.\label{eq:V+der}
\end{eqnarray}
To get the above we have used Item $(a)$, i.e., 
$$
\sum_{i=1}^{n-1}\frac{\partial V_\vep}{\partial x_i}x_{i+1}+
+\frac{\partial V_\vep}{\partial x_n}\omega_0(x)\leq -cV_+^{1+\al},
$$
Item $(b)$, and the fact that the function $\langle \nabla V_+(y),D_{\rf}y\rangle$ having the same degree of $\rf$-homogeneity as $V$ is smaller than $\frac c2V_+^{1+\al}$ outside $S_2$ for $C_a$ small enough. The claim is proved by using Lemma~\ref{le:notreL1}. 

As soon as a trajectory $x$ of the closed-loop system corresponding to \eqref{eq:der2} and $u=\omega$ reaches $S_2$, it verifies $V_-(x)=1$. Morever for trajectories in $S_2$, a computation entirely similar to \eqref{eq:V+der} yields the differential inequality $\dot V_-\leq -\frac{c}2V_-^{1-\al}$, which proves that any trajectory starting at $V_-(x)=1$ enters in $S_2$, remains in it for all subsequent times and, again according to Lemma~\ref{le:notreL1}, converges to the origin in a uniform finite-time. That concludes the proof of Proposition~\ref{prop:fixed1}.

\end{proof}
 \begin{remark}
 Note that the feedback $\omega$ defined in \eqref{eq:feed-fix}
 exhibits a discontinuity at $V_-=1$. By using the feedback law
 of Theorem~\ref{th:TH1}, one can remove that discontinuity, if in addition, an upper bound for $b$ is assumed to be known. 
 \end{remark}

\section{Robust prescribed-time stabilisation}\label{s:robust}
In the previous section, a linear feedback $u=K^Ty$ was considered but this choice faces a pernicious problem as soon as there is some noise measurement on the state.
We propose in this section an alternative feedback law for prescribed-time stabilisation with ISS properties in presence of measurement noise and unmatched uncertainties. The construction of this feedback runs in two steps, the first one deals with 
the fixed-time stabilisation in the unperturbed case and the second addresses the ISS issue in the perturbed case.
\subsection{A special fixed-time stabilisation design}\label{ss:yet}
The unperturbed case associated with \eqref{eq:PCofI-0}, namely
   \beq\label{eq:n-int-U}
   \dot x=J_nx+u\ e_n,
   \eeq
 which is referred in the sequel as the {\em $n$-th order pure chain of integrators}. 

To proceed, we rely on the original idea of \cite{HLCH-2017} and use the perturbation trick of \cite{LREPP-2018} to provide an explicit and continuous feedback law.

   We next provide the necessary material needed to describe the solution of \cite{HLCH-2017}. The following construction, which is based on a backstepping procedure, has been given first in \cite{Hong-2002} and we will modify it to handle the present situation.
   
   \begin{defn}\label{def-Hong-controller}
 Let $\ell_j>0$, $j=1,\cdots,n$ be positive constants.  For $\kappa \in
\left[-\frac1n,\frac1n\right]$, define the weights
$\mathbf{r}(\kappa)=(r_1,\cdots,r_n)$ by $r_j= 1 + (j-1)\kappa$ , $
j=1,\cdots, n$.  Define the feedback control law
\begin{equation} \label{hong_H} u = \omega_\kappa^{H}(x):= v_n,\end{equation} where the $v_j=v_j(x)$ are defined inductively by:
\begin{equation}\label{eq:v}
 v_0 = 0,\ v_{j} = -\ell_{j} \lceil \lceil x_{j} \rfloor^{\beta_{j-1}
 } - \lceil v_{j-1} \rfloor^{\beta_{j-1} } \rfloor^{\frac{r_j + \kappa}{r_j\beta_{j-1}}},
\end{equation}
and where the $\beta_i$'s are defined by $ \beta_0 =r_2,$ $ (\beta_j +
1)r_{j+1} = \beta_0 + 1 > 0$, $j=1,...,n-1$. 

For $1\leq j\leq n$, 
we also consider the union of the homogeneous unit spheres associated
with $\mathbf{r}$, $\kappa\in\left[-\frac1{2n},\frac1{2n}\right]$,
i.e., 
\begin{equation}\label{eq:Sj}
S^j= \underset{\kappa\in
  \left[-\frac1{2n},\frac1{2n}\right]}{\bigcup} \left\{x\in
\mathbb{R}^j\ \mid\ \vert x_1\vert^{\frac2{r_1}}+\cdots +\vert
x_n\vert^{\frac2{r_n}}=1\right\}.
\end{equation}
Then $S^j$ is clearly a compact
subset of $\mathbb{R}^j$ and dealing with this set constitutes the
main difference with \cite{Hong-2002}.

\end{defn}
We have then the following proposition.
\begin{proposition}\label{prop:Hong-gen}
Then, there exist positive constants $\ell_j>0$, $j=1,\cdots,n$ such that
for every $\kappa\in \left[-\frac1{2n},\frac1{2n}\right]$, the feedback law
$ u=\omega_\kappa^{H}(x)$ defined in \eqref{hong_H} stabilizes the
system \eqref{eq:n-int-U}. Moreover, there exists a homogeneous
$C^1$-function $V_{\kappa}:\mathbb{R}^n\to\R_+$ given by {\small
\begin{align}\label{eq:fdL-Hong}
 V_{\kappa}(x)=\sum_{j=1}^{n} \frac{\left( \left| x_j \right|^{\beta
     _{j - 1} + 1} -\left| v_{j- 1} \right|^{\beta _{j - 1} + 1}
   \right)}{\beta _{j- 1} + 1} - \left\lceil v_{j - 1}
 \right\rfloor^{\beta _{j - 1}} \left( x_j - v_{j- 1} \right),
\end{align}}
which is a Lyapunov function for the closed-loop system
\eqref{eq:n-int-U} with the state feedback $\omega_\kappa^{H}$, and it satisfies
\begin{equation}\label{eq:diffV}
\dot V_{\kappa} \le -C V_{\kappa}^{1+\alpha(\kappa)},
\qquad \alpha(\kappa):=\frac{\kappa}{2 + \kappa},
\end{equation}
 for some positive constant $C$, independent of $\kappa$. Moreover, $V_{\kappa}$ is $\mathbf{r}(\kappa)$-homogeneous of degree $(2+\kappa)$ with respect to the family of dilations $\big(D^{\mathbf{r}(\kappa)}_{\lambda}\big)_{\lambda>0}$.
\end{proposition}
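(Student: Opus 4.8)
The plan is to follow the backstepping construction of \cite{Hong-2002}, the one genuinely new point being that every constant produced along the way must be made uniform in the homogeneity parameter $\kappa$ as it ranges over the compact interval $I:=\left[-\frac1{2n},\frac1{2n}\right]$.

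First I would dispose of the homogeneity bookkeeping. From $r_j(\kappa)=1+(j-1)\kappa$ and $(\beta_j+1)r_{j+1}=\beta_0+1=r_2+1=2+\kappa$ one reads off $\beta_{j-1}+1=\frac{2+\kappa}{r_j}$ and $r_j+\kappa=r_{j+1}$, so the exponent in \eqref{eq:v} is $\frac{r_{j+1}}{r_j\beta_{j-1}}$. An immediate induction then gives: each $v_j$ is $\mathbf r(\kappa)$-homogeneous of degree $r_{j+1}$ as a function of $(x_1,\dots,x_j)$; the closed-loop vector field $x\mapsto J_nx+\omega_\kappa^{H}(x)e_n$ is $\mathbf r(\kappa)$-homogeneous of degree $\kappa$; and each summand of $V_\kappa$ in \eqref{eq:fdL-Hong} is $\mathbf r(\kappa)$-homogeneous of degree $2+\kappa$, so $V_\kappa$ has the claimed degree. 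Moreover each summand is the Bregman divergence of the convex map $s\mapsto\frac{|s|^{\beta_{j-1}+1}}{\beta_{j-1}+1}$ evaluated between $x_j$ and $v_{j-1}$, hence nonnegative and vanishing only when $x_j=v_{j-1}$; since $v_0=0$ this propagates to $V_\kappa(x)=0\iff x=0$, yielding positive definiteness. The $C^1$ regularity is inherited from \cite{Hong-2002} (it uses only $\beta_{j-1}>0$ and positivity of the exponents appearing in the $v_j$).

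Next comes the core induction on $j\in\{1,\dots,n\}$: there exist $\ell_1,\dots,\ell_j>0$ \emph{and} a constant $C_j>0$, all independent of $\kappa\in I$, such that along the $j$-dimensional subsystem $\dot x_i=x_{i+1}$ ($i<j$), $\dot x_j=v_j$, the truncation $V_\kappa^{(j)}$ of \eqref{eq:fdL-Hong} to its first $j$ terms, viewed as a function of $(x_1,\dots,x_j)$, satisfies $\dot V_\kappa^{(j)}\le -C_j \bigl(V_\kappa^{(j)}\bigr)^{1+\alpha(\kappa)}$ together with the structural fact $\frac{\partial V_\kappa^{(j)}}{\partial x_j}v_j\le 0$, equality only at $v_j=0$ (this last item is both what drives Hong's step and what produces the geometric condition needed in Proposition~\ref{prop:fixed1} at $j=n$). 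The inductive step is Hong's computation: differentiating, $\dot V_\kappa^{(j)}$ equals $\dot V_\kappa^{(j-1)}$ (with the input of the $(j-1)$-subsystem replaced by $x_j$) plus $\bigl(\lceil x_j\rfloor^{\beta_{j-1}}-\lceil v_{j-1}\rfloor^{\beta_{j-1}}\bigr)v_j$ plus cross terms carrying $\partial v_{j-1}/\partial x_i$; the definition of $v_j$ turns the middle term into $-\ell_j\bigl|\lceil x_j\rfloor^{\beta_{j-1}}-\lceil v_{j-1}\rfloor^{\beta_{j-1}}\bigr|^{1+\cdots}$, and Young's inequality combined with the matching homogeneity degrees absorbs both the residual of $\dot V_\kappa^{(j-1)}$ and the cross terms into a fixed fraction of this negative term, provided $\ell_j$ exceeds a certain threshold.

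The delicate point — and the reason the $\kappa$-uniform compact sets $S^j$ of \eqref{eq:Sj} are introduced rather than the $\kappa$-dependent homogeneous spheres — is that the Young-type constants and the threshold on $\ell_j$ a priori depend on $\kappa$. I would resolve this by observing that $(\kappa,x)\mapsto v_j(x)$, $(\kappa,x)\mapsto V_\kappa(x)$ and $(\kappa,x)\mapsto\dot V_\kappa(x)$ are jointly continuous on $I\times\R^j$ (the signed power $(s,p)\mapsto\lceil s\rfloor^{p}$ is continuous for $p>0$); hence on the compact $I\times S^j$ all quantities entering the step-$j$ estimate attain finite extrema, the threshold on $\ell_j$ is a genuine number, and one fixes $\ell_j$ above it before proceeding — keeping the whole backstepping $\kappa$-uniform. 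Finally, for $j=n$: $\dot V_\kappa$ is continuous and, by the induction, strictly negative on the compact $I\times S^n$, hence $\le -C<0$ there with $C$ independent of $\kappa$; since both $\dot V_\kappa$ and $V_\kappa^{1+\alpha(\kappa)}$ are $\mathbf r(\kappa)$-homogeneous of the common degree $2+2\kappa$ (indeed $(2+\kappa)\bigl(1+\tfrac{\kappa}{2+\kappa}\bigr)=2+2\kappa$) and $V_\kappa$ is bounded above and below by $\kappa$-uniform positive constants on $S^n$, applying the dilation $D^{\mathbf r(\kappa)}_\lambda$ upgrades this to $\dot V_\kappa\le -C V_\kappa^{1+\alpha(\kappa)}$ on all of $\R^n$; global asymptotic stability of \eqref{eq:n-int-U} under $\omega_\kappa^{H}$ follows, together with the finite-time/fixed-time refinements from Lemma~\ref{le:notreL1} according to the sign of $\kappa$. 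The main obstacle is entirely this uniformity bookkeeping — the individual estimates are Hong's — namely guaranteeing that the gains $\ell_j$ and the decay constant $C$ can be frozen independently of $\kappa$, which is exactly what forces one to work on $S^j$ and to use joint continuity in $(\kappa,x)$.
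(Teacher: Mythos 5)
Your plan follows essentially the same route as the paper's proof: Hong's backstepping construction, with the only genuinely new ingredient being uniformity in $\kappa$, obtained — exactly as in the paper — by working on the compact union of homogeneous spheres $S^j$ of \eqref{eq:Sj}, fixing each gain $\ell_j$ above a $\kappa$-independent threshold, and concluding by homogeneity (both $\dot V_\kappa$ and $V_\kappa^{1+\alpha(\kappa)}$ having the common degree $2+2\kappa$). One sentence of your inductive step, however, would fail if implemented literally: you cannot absorb the residual term $\frac{\partial V_{\kappa,j-1}}{\partial x_{j-1}}(x_j-v_{j-1})$ and the cross terms into a fixed fraction of the new negative term $-\ell_j Z_{\kappa,j}$ alone. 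Near the set $\{x_j=v_{j-1}\}$ (but away from the origin on $S^j$) those terms vanish only to orders $1$ and $\min(1,\beta_{j-1})$ in $\vert x_j-v_{j-1}\vert$, whereas $Z_{\kappa,j}\sim\vert x_j-v_{j-1}\vert^{2(1+\kappa)/(r_j\beta_{j-1})}$ with exponent strictly larger than $1$ for all $\kappa\in[-\frac1{2n},\frac1{2n}]$, so no choice of $\ell_j$ makes $-\ell_j Z_{\kappa,j}$ dominate there. In that region one must spend the inherited strict negativity of $\dot V_{\kappa,j-1}$ — which your induction hypothesis does provide — and this is precisely why the paper splits the estimate \eqref{eq:derVj1} at $\vert x_j-v_{j-1}\vert=\xi_j$, using the step-$(j-1)$ bound when $\vert x_j-v_{j-1}\vert\leq\xi_j$ and invoking largeness of $\ell_j$ only on the complementary region; the Young-type splitting you gesture at must likewise send part of the bad terms onto the inherited negative term, not all of it onto $-\ell_j Z_{\kappa,j}$.

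A secondary difference: where you obtain the thresholds and the constant $C$ by joint continuity and extrema on the compact sets $I\times S^j$, the paper deliberately tracks explicit constants $K_j,L_j,M_j$ through the elementary inequality \eqref{eq:elem3} (see Remark~\ref{rem:exp0}). Your argument suffices for the proposition itself, but the explicit form of the thresholds is what Subsection~\ref{ss:explicit} and Proposition~\ref{prop:estim-k} later rely on, so the compactness shortcut buys brevity at the cost of the quantitative information the paper needs downstream.
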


\begin{remark}\label{rem:V0}
\begin{description}
\item[$(i)$] The previous proposition is essentially Theorem $3.1$ of \cite{Hong-2002}, except that the gains $\ell_i$ are uniform with respect to 
$ \kappa\in\left[-\frac1{2n},\frac1{2n}\right]$. The choice of $\frac1{2n}$ has been made because the previous proposition actually holds true for $ \kappa\in\left[-\frac1{n},\frac1{n}\right]$ at the exception that $V_{\frac1n}$ is not $C^1$ on $\mathbb{R}^n$.
\item[$(ii)$] The critical exponent $1+\alpha(\kappa)$ appearing in \eqref{eq:diffV} is larger than one if $\kappa>0$ and smaller than one if $\kappa<0$.  
\item[$(iii)$] Note also that for $\kappa=0$, one gets a linear feedback and $V_0$ is a positive definite quadratic form, hence there exists  a real symmetric positive definite $n\times n$ matrix $P$ such that $V_0(x)=x^TPx$ for every $x\in\mathbb{R}^n$. Finally,  the time derivative of $V_0$ is associated with
 the $n\times n$ matrix $L^TP+PL$ where $L$ is the companion matrix associated with the coefficients $l_1,\cdots,l_n$. We deduce at once that $L$ is Hurwitz since the differential inequality \eqref{eq:diffV} for $\kappa=0$ is equivalent to the LMI, $\ A^TP+PA\leq -CP$. We set $Q:=-(A^TP+PA)$, which is a real symmetric positive definite $n\times n$ matrix.
\end{description}
\end{remark}
\begin{proof} The argument follows closely that of Theorem $3.1$ of \cite{Hong-2002}, but we will bring some technical  changes to obtain the required uniformity with respect to $\kappa\in  \left[-\frac1{2n},\frac1{2n}\right]$. Moreover, in order to show in the next section the explicit character of our construction, we will provide quantitative estimates on the several constants involved in the construction, which are new with respect to \cite{Hong-2002}.

Let $\kappa\in\left[-\frac1{2n},\frac1{2n}\right]$ and set, for $1\leq j\leq n$, $x^{(j)}=(x_1,\cdots,x_j)$, 
\begin{eqnarray}
W_{\kappa,j}(x^{(j)})&=&\int_{v_{j - 1}}^{x_j}
\big(\left\lfloor s \right\rceil^{\beta _{j - 1}}-\left\lfloor v_{j - 1} \right\rceil^{\beta _{j - 1}}\big)ds \nonumber\\
&=&\frac{1}{\beta _{i - 1} + 1} \left( \left| x_j \right|^{\beta _{j - 1} + 1} -\left| v_{j - 1} \right|^{\beta _{j- 1} + 1}  \right) - \left\lfloor v_{j - 1} \right\rceil^{\beta _{j - 1}} \left( x_j - v_{j- 1} \right),\label{eq:Wj}
\end{eqnarray}
and
\begin{equation}\label{eq:Vj}
V_{\kappa,0}:=0,\quad
\hbox{ and }V_{\kappa,j}:=W_{\kappa,j}+V_{\kappa,j-1}.
\end{equation}
One has $V_{\kappa}(x)=V_{\kappa,n}(x^{(n)})=\sum_{i=1}^{n}W_{\kappa,j}(x^{(j)})$. The choice of the $\ell_j$ is made recursively at each step $1\leq j\leq n$
by considering, as in \cite{Hong-2002}, the following expression
\begin{equation}\label{eq:derVj}
\frac{d}{dt}V_{\kappa,j}=\frac{d}{dt}V_{\kappa,j-1}+
\frac{\partial V_{\kappa,j-1}}{\partial x_{j-1}}(x_{j}-v_{j-1})+
\sum_{i=1}^{j-1}\frac{\partial W_{\kappa,j}}{\partial x_i}x_{i+1}+
\frac{\partial W_{\kappa,j}}{\partial x_j}v_j,
\end{equation}
where $\frac{d}{dt}V_{\kappa,j-1}$ is used to denote the time derivative of $V_{\kappa,j-1}$ is taken along the $(j-1)$th pure chain of integrators and the functions $\frac{\partial V_{\kappa,j-1}}{\partial x_{j-1}}$ and $\frac{\partial W_{\kappa,j}}{\partial x_i}$ are continuous.

We also get that, for $1\leq j\leq n$, one has 
\begin{eqnarray}
\frac{\partial V_{\kappa,j-1}}{\partial x_{j-1}}(x_{j}-v_{j-1})&=&
\frac{\partial W_{\kappa,j-1}}{\partial x_{j-1}}(x_{j}-v_{j-1})=
\big(\left\lfloor x_{j-1} \right\rceil^{\beta _{j - 2}}-\left\lfloor v_{j - 2} \right\rceil^{\beta _{j - 2}}\big)(x_{j}-v_{j-1})\label{eq:Vj1}\\
\frac{\partial W_{\kappa,j}}{\partial x_j}v_j&=&- \ell_j Z_{\kappa,j},\ \ 
Z_{\kappa,j}=\big\vert \left\lfloor x_j\right\rceil^{\beta_{j-1}}-  \left\lfloor v_{j-1}\right\rceil^{\beta_{j-1}}\big\vert^{2(1+\kappa)/r_j\beta_{j-1}}.\label{eq:key}
\end{eqnarray}

We will need the following elementary fact: 
for every $\alpha$ in a compact set of $\mathbb{R}_+^*$ and $M>0$, there exists two positive constants $A,B$ such that, for every real numbers $\vert x\vert, \vert y\vert\leq M$, one has
\begin{equation}\label{eq:elem3}
A\vert x-y\vert^{\max(1,\alpha)}\leq  \vert \left\lfloor x\right\rceil^{\alpha}-\left\lfloor y\right\rceil^{\alpha}\vert\leq
B\vert x-y\vert^{\min(1,\alpha)}.
\end{equation}

We next prove by induction on $1\leq j\leq n$, that there exists 
positive real numbers $\ell_1,\cdots, \ell_n$ 
such that 
\begin{equation}\label{eq:derVj}
\max\{\frac{d}{dt}V_{\kappa,j}\ \mid\ 
-\frac1{2n}\leq\kappa\leq\frac1{2n},\ x^{(j)}\in S^j\}
\leq -\frac{l_1}{2^{j-1}}.
\end{equation}
By homogeneity and for $j=n$, one immediately gets \eqref{eq:diffV} and the conclusion of the proposition. 

In the rest of the argument, we use $K_j,M_j,L_j$ to denote positive constants depending on $S^j$ and $\ell_1,\cdots,\ell_{j-1}$ but independent of $\ell_j$. For $j=1$, \eqref{eq:derVj} reduces to $\frac{d}{dt}V_{\kappa,1}=-l_1$ and any positive $l_1$ does the job.
For the inductive step with $2\leq j\leq n$, assume that $\ell_1,\cdots,\ell_{j-1}$ have been built with the required properties, in particular we have
$\frac{d}{dt}V_{\kappa,j-1}\leq -\frac{l_1}{2^{j-2}}$ on $S^{j-1}$.

From \eqref{eq:Vj1}, we get
\begin{equation}\label{eq:Vk1}
\big\vert \frac{\partial V_{\kappa,j-1}}{\partial x_{j-1}}(x_{j}-v_{j-1})
\big\vert\leq K_j\vert x_{j}-v_{j-1}\vert.
\end{equation}
For $1\leq i\leq j-1$, the continuous function $\frac{\partial W_{\kappa,j}}{\partial x_i}$ is $\mathbf{r}(\kappa)$-homogeneous of degree $(2+\kappa)$ with respect to the family of dilations $\big(D^{\mathbf{r}(\kappa)}_{\varepsilon}\big)_{\varepsilon>0}$. (Actually, one restricts this homogeneity to $x^{(j)}$.) Moreover, it is equal to zero if $x_{j}=v_{j-1}$. Hence, by using repeatedly \eqref{eq:elem3} and noticing that $v_1,\cdots,v_{j-1}$ do not depend on $\ell_j$, one deduces that there exists $L_j,M_j>0$ such that, for every $x^{(j)}\in S^j$, if $\tilde{\beta}_j=\min(1,\beta_{j-1})$, one has, for every $\kappa\in[-\frac1{2n},\frac1{2n}]$ and $x^{(j)}\in S^j$,
\begin{eqnarray}
\big\vert\sum_{i=1}^{j-1}\frac{\partial W_{\kappa,j}}{\partial x_i}x_{i+1}\big\vert &\leq& L_j\vert x_{j}-v_{j-1}\vert^{\tilde{\beta}_j},\label{eq:Wj1}\\
\vert Z_{\kappa,j}\vert&\geq& M_j\vert x_{j}-v_{j-1}\vert^{2(1+\kappa)/r_j\tilde{\beta_j}}.   \label{eq:Zj1}
\end{eqnarray}
(Note that we used in the above that $\vert x_j\vert\leq 1$ for $1\leq j\leq n$ and $x^{(j)}\in S^j$ as well as a bound on the $v_j$ obtained with an immediate inductive argument based on \eqref{eq:v}.)

Inserting \eqref{eq:Vk1}, \eqref{eq:Wj1} and \eqref{eq:Zj1} in \eqref{eq:derVj}, one deduces that,  
\begin{equation}\label{eq:derVj1}
\frac{d}{dt}V_{\kappa,j}\leq -\frac{l_1}{2^{j-2}}+(K_j+L_j)\vert x_{j}-v_{j-1}\vert^{\tilde{\beta}_j}
-l_jM_j\vert x_{j}-v_{j-1}\vert^{2(1+\kappa)/r_j\tilde{\beta_j}}.
\end{equation}
Set $\xi_j=\big(\frac{l_1}{(K_j+L_j)2^{j-1}})^{1/\tilde{\beta}_j}$. By definition, one gets that
$\frac{d}{dt}V_{\kappa,j}\leq -\frac{l_1}{2^{j-1}}$ if $\vert x_{j}-v_{j-1}\vert\leq \xi_j$.
Now, if $\vert x_{j}-v_{j-1}\vert>\xi_j$, one chooses $l_j$ such that
\begin{equation}\label{eq:ell}
\ell_j\geq \frac{(K_j+L_j)}{M_j\xi_j^{2(1+\kappa)/r_j\tilde{\beta_j}-1/\tilde{\beta}_j}}.
\end{equation}
This is possible since the right-hand side of the above inequality does not depend on $\ell_j$. In that case, $\frac{d}{dt}V_{\kappa,j}\leq -\frac{l_1}{2^{j-2}}$. This concludes the proof of the inductive step.

\end{proof}
\begin{remark}\label{rem:exp0}
One can notice in the above argument a difference with respect of that of \cite{Hong-2002} which consists in introducing the constants $K_j,L_j$ and $M_j$. The latter provide an explicit choice  in order to be as explicit as possible in view of numerical determination of the constants $\ell_1,\cdots,\ell_n$.
\end{remark}

We next consider a state varying homogeneity degree given next.
 \begin{defn}\label{def-Our--controller}
 For $m\in (0,1)$ and $\kappa_0\in (0,\frac1{2n})$, define the following continuous function $\kappa:\mathbb{R}^n\to [-\kappa_0,\kappa_0]$ by
 \begin{equation}\label{eq:feed00}
 \kappa(x)=\left\{
 \begin{array}{lll}
 \kappa_0,&\hbox{ if }& V_0(x)>1+m,\\  
 \kappa_0\big(1+\frac{V_0(x)-(1+m)}{m}\big),&\hbox{ if } &1-m\leq V_0(x)\leq 1+m,\\
 -\kappa_0,&\hbox{ if } &V_0(x)<1-m.
\end{array}
\right.
\end{equation}
 \end{defn}
 We also need the following notation. For $\kappa\in [-\frac1{2n},\frac1{2n}]$ and $a,b$ non negative real numbers, let $B^{\kappa}_{a,b}$, $B^{\kappa}_{\leq a}$ 
and $B^{\kappa}_{\geq b}$ respectively be the subsets of $\mathbb{R}^n$ defined respecvely by
\begin{eqnarray*}
B^{\kappa}_{a,b}&:=&\{x\in\mathbb{R}^n,\ \mid\ a\leq V_{\kappa}(x)\leq b\},\\
B^{\kappa}_{< a}&:=&\{x\in\mathbb{R}^n,\ \mid\ V_{\kappa}(x)<a\},\\
B^{\kappa}_{> b}&:=&\{x\in\mathbb{R}^n,\ \mid\ b< V_{\kappa}(x)\},\\
B^{\kappa}_a&:=&\{x\in\mathbb{R}^n,\ \mid V_{\kappa}(x)=a\}.
\end{eqnarray*}
The last set corresponds to the weighted spheres associated with the positive definite functions $V_\kappa$.

In the spirit of \cite{LREPP-2018}, we are now able to define the introduce the feedbacks which will ultimately yield prescribed time stability. We have the following result.

\begin{Theorem}\label{th:TH1}
Assume that the uncertainty $b$ is bounded, i.e., one has
\begin{equation}\label{eq:bb}
\overline{b}\geq b(t)\geq \underline{b},\quad t\geq,
\end{equation}
for some positive constants $\overline{b},\underline{b}$. Then, 
there exists $m\in (0,1)$ and $\kappa_0\in (0,\frac1{2n})$ such that, the undisturbed $n$-th order chain of integrators defined by
\begin{equation}\label{eq:n-int-ud}
\dot x(t)=J_nx(t)+b(t)u(t),\quad \overline{b}\geq b(t)\geq \underline{b},\quad t\geq 0,
\end{equation} 
together with  an adapted feedback law given by $\omega_{\kappa(x)}^{H}(x)$, with $\kappa(\cdot)$  defined in \eqref{eq:feed00} is globally fixed-time stable at the origin in at most time $T(m,\kappa_0)$ upper bounded as
\begin{equation}\label{eq:est-globalF}
T(m,\kappa_0)\leq
\frac1{C}\Big(\frac{r(m,\kappa_0)^{-\alpha(\kappa_0)}}{\alpha(\kappa_0)}-2\ln(2m)+\frac{r(m,-\kappa_0)^{-\alpha(-\kappa_0)}}{-\alpha(-\kappa_0)}\Big),
\end{equation}
where $r(m,\kappa_0)>0$ (and $r(m,-\kappa_0)>0$) is the largest (smallest) number $r>0$ such that $B^{\kappa_0}_{< r}$ ($B^{-\kappa_0}_{< r}$) is contained in (contains) $B^{0}_{<1+m}$ ($B^{0}_{< 1-m}$) and the constant $C$ has been introduced in \eqref{eq:diffV}.
\end{Theorem}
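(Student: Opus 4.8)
The plan is to split the trajectory of the closed loop \eqref{eq:n-int-ud} with $u=\omega_{\kappa(x)}^H(x)$ into three successive phases according to the value of $V_0(x)$ --- the \emph{outer} region $\{V_0>1+m\}$, where $\kappa(x)\equiv\kappa_0>0$; the \emph{transition shell} $\{1-m\le V_0\le 1+m\}$, where $\kappa(x)=\frac{\kappa_0}{m}\bigl(V_0(x)-1\bigr)$ interpolates; and the \emph{inner} region $\{V_0<1-m\}$, where $\kappa(x)\equiv-\kappa_0<0$ --- to bound the duration of each phase and to add the three estimates; the three terms of \eqref{eq:est-globalF} come out of the three phases in this order.

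First I would carry out the preliminary step of transferring the differential inequality \eqref{eq:diffV}, which concerns the \emph{pure} chain, to the uncertain chain \eqref{eq:n-int-ud}. Rescaling the gains $\ell_j$ by $1/\underline{b}$ amounts to replacing $b$ by $b/\underline{b}\ge 1$; writing $\dot V_\kappa$ along \eqref{eq:n-int-ud} as the pure-chain value plus $\bigl(\tfrac{b}{\underline{b}}-1\bigr)\frac{\partial V_\kappa}{\partial x_n}\omega_\kappa^H(x)$ and using the sign property $\frac{\partial V_\kappa}{\partial x_n}\omega_\kappa^H(x)=-\ell_n Z_{\kappa,n}\le 0$ built into the Hong feedback (see \eqref{eq:key}), the extra term is non-positive, so $\dot V_\kappa\le -C V_\kappa^{1+\alpha(\kappa)}$ survives for every $b\ge\underline{b}$, and, by Proposition~\ref{prop:Hong-gen}, uniformly in $\kappa\in[-\kappa_0,\kappa_0]$ (no upper bound on $b$ is needed here). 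In addition --- and this is where the bound $\overline{b}$ of \eqref{eq:bb} is used --- on the \emph{fixed} compact annulus $\{1-m\le V_0\le 1+m\}$ one wants $V_0$ to be a strict Lyapunov function for the closed loop for \emph{every} admissible $\kappa(x)\in[-\kappa_0,\kappa_0]$ and $b\in[\underline{b},\overline{b}]$: the map $(x,\kappa,b)\mapsto\langle\nabla V_0(x),J_nx+b\,\omega_\kappa^H(x)e_n\rangle$ is continuous and, at $\kappa=0$ (after the rescaling), equals $-x^{T}Qx$ with $Q>0$ by Remark~\ref{rem:V0}$(iii)$, hence $\le -CV_0(x)$; by compactness of the annulus one can therefore fix $\kappa_0$ small enough (and $m$) so that this quantity stays $\le -C V_0(x)^{1+\alpha(\kappa(x))}$ there, shrinking $C$ once and for all if necessary.

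With these ingredients: \textbf{(outer phase)} while $V_0(x(t))>1+m$ one has $x\notin B^0_{<1+m}$, hence $x\notin B^{\kappa_0}_{<r(m,\kappa_0)}$ by definition of $r(m,\kappa_0)$, so $V_{\kappa_0}\ge r(m,\kappa_0)$; integrating $\dot V_{\kappa_0}\le -C V_{\kappa_0}^{1+\alpha(\kappa_0)}$ with $\alpha(\kappa_0)>0$ (direct integration, the comparison underlying Lemma~\ref{le:notreL1}) gives $V_{\kappa_0}(x(t))^{-\alpha(\kappa_0)}\ge \alpha(\kappa_0)\,C\,t$, so this phase lasts at most $\frac1C\frac{r(m,\kappa_0)^{-\alpha(\kappa_0)}}{\alpha(\kappa_0)}$ and ends at $V_0=1+m$. \textbf{(Inner phase)} once $V_0(x)<1-m$ one has $x\in B^0_{<1-m}\subseteq B^{-\kappa_0}_{<r(m,-\kappa_0)}$, hence $V_{-\kappa_0}\le r(m,-\kappa_0)$, and since $\kappa(x)\equiv-\kappa_0$, integrating $\dot V_{-\kappa_0}\le -C V_{-\kappa_0}^{1+\alpha(-\kappa_0)}$ with $\alpha(-\kappa_0)<0$ gives convergence to the origin in time $\le\frac1C\frac{r(m,-\kappa_0)^{-\alpha(-\kappa_0)}}{-\alpha(-\kappa_0)}$ (using also that $\kappa_0$ is small enough that $V_{-\kappa_0}$ is a Lyapunov function on a neighbourhood of $\overline{B^0_{<1-m}}$, so the trajectory cannot climb back into the shell). \textbf{(Transition phase)} on $\{1-m\le V_0\le 1+m\}$, $V_0$ is strictly decreasing by the preliminary step, so the shell is crossed monotonically; since $\alpha(\kappa(x))$ has the sign of $V_0(x)-1$, one has $V_0^{1+\alpha(\kappa(x))}\ge V_0$ throughout the shell, whence $\dot V_0\le -C V_0$ and the crossing time is at most $\frac1C\ln\frac{1+m}{1-m}$, which for the (small) value of $m$ finally retained is $\le -\frac{2\ln(2m)}{C}$. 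Summing the three bounds yields \eqref{eq:est-globalF}; since the resulting bound is uniform over initial data and the Lyapunov analysis gives stability at the origin, global finite-time stability upgrades to fixed-time stability.

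The main obstacle is the preliminary step --- the quantitative perturbation argument à la \cite{LREPP-2018}: one must choose the gains $\ell_j$ (large, depending on $\underline{b}$) and the parameters $\kappa_0,m$ (small) so that \emph{simultaneously} \eqref{eq:diffV} persists for all $b\in[\underline{b},\overline{b}]$ uniformly in $\kappa\in[-\kappa_0,\kappa_0]$ and $V_0$ is a strict Lyapunov function for the whole family $\{\omega_\kappa^H:\,|\kappa|\le\kappa_0\}$ on the transition annulus --- the latter being the precise point where knowledge of the upper bound $\overline{b}$ is indispensable. A secondary, more bookkeeping-type issue is to check that the three regions are visited in order and only once --- equivalently, that $V_0$ is non-increasing along the entire trajectory --- so that the three partial time estimates may simply be added; this amounts to matching the Lyapunov/sign conditions across the interfaces $V_0=1\pm m$, where $\kappa(\cdot)$ is continuous by construction.
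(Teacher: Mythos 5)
Your proposal is correct and follows essentially the same route as the paper's proof: the same three-phase decomposition (constant-$\kappa_0$ homogeneous decay of $V_{\kappa_0}$ outside $B^0_{<1+m}$, a perturbation argument for $V_0$ on the shell $B^0_{1-m,1+m}$ where the upper bound $\overline{b}$ and the smallness of $m,\kappa_0$ are used, and constant $-\kappa_0$ decay of $V_{-\kappa_0}$ inside), integrated to give exactly the three terms of \eqref{eq:est-globalF} via $r(m,\pm\kappa_0)$, with the uncertainty $b$ absorbed through the gain rescaling and the sign property $\frac{\partial V_\kappa}{\partial x_n}\omega^H_\kappa\leq 0$. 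The only differences are cosmetic: the paper quantifies the shell step through $\delta(x)=\overline{b}\,\vert\omega^H_{\kappa(x)}(x)-\omega^H_0(x)\vert$ rather than your compactness argument, and your parenthetical claim that $V_0$ is non-increasing along the entire trajectory is neither needed nor established outside the shell --- what your argument actually uses (strict decrease of $V_0$ on the interfaces $V_0=1\pm m$, guaranteed by the shell estimate) is precisely what the paper uses.
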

By adapted, we mean the following: strictly speaking, we must choose the feedback law $\omega_{\kappa(x)}^{H}(x)/\underline{b}$. However, we can replace 
$\ell_n$ by either $\ell_n/\underline{b}$ or by $\underline{b}\ell_n$
 in order to satisfy \eqref{eq:ell}. Hence, with no loss of generality, we assume $\underline{b}=1$.
\begin{proof}
For this result, we follow the perturbative argument considered in the proof of  Lemma 2 in \cite{LREPP-2018}. For that purpose, the time derivative of $V_0$ along non trivial trajectories of System \eqref{eq:n-int-ud} closed by  the feedback law given by $\omega_{\kappa(x)}^{H}(x)$ can be written as
\begin{equation}\label{eq:diff0}
\dot V_0=2x^TP(J_nx+b\omega_{\kappa(x)}^{H}(x)e_n)\leq -x^TQx+2\vert x^TPe_n\vert \delta(x), \ \delta(x):=\overline{b}
\vert \omega_{\kappa(x)}^{H}(x)-\omega_0^{H}(x)\vert.
\end{equation}
We have to first to show that trajectories of 
\begin{equation}\label{eq:closedS0}
 \dot x=J_nx+\omega_{\kappa(x)}^{H}(x)\ e_n, 
 \end{equation}
are well-defined and second that trajectories starting in $B^0_{> 1+m}$
reach $B^0_{1+m}$ in finite time, then ''cross'' it till reaching $B^0_{1-m}$ in finite time and finally remain in  $B^0_{< 1-m}$ for all larger times, while converging to zero in finite time.

Since the right-hand side of \eqref{eq:closedS0} is continuous, there
exist solutions from any initial condition defined at least on a non
trivial interval. Clearly, there exists $R>0$ such that trajectories
starting at any $x_0\in B^{\kappa_0}_{>R}$ stay in the compact set
$B^{\kappa_0}_{<V_{\kappa_0}(x_0)}$ and hence are defined for all
times.

Both the convergence parts of the claim follow from the arguments of 
\cite{BB-2005} and Lemma~\eqref{le:notreL1}, where one proves the following
\begin{itemize}
\item the closed-loop system \eqref{eq:closedS0} is $\mathbf{r}(\kappa_0)$-homogeneous of degree $2+\kappa_0$ in $B^0_{> 1+m}$ and hence converges in finite-time to $B^0_{1+m}$,
 \item the closed-loop system \eqref{eq:closedS0} is $\mathbf{r}(-\kappa_0)$-homogeneous of degree $2-\kappa_0$ in $B^0_{< 1-m}$ and hence converges in finite-time to the origin. 
 \end{itemize}
For the remaining part of the argument, it amounts to show that, 
for $m\in (0,1)$ and $\kappa_0\in (0,1/n)$ small enough, the time derivative of $V_0$
along trajectories of \eqref{eq:closedS0} is negative in $B^0_{1-m,1+m}$.
To see that, it is enough to notice that the function $\delta$ defined in \eqref{eq:diff0} is continuous and 
 tends to zero if either $m$ or $\kappa_0$ tends to zero. 

It remains to provide a first quantitative estimate of the ''fixed-time'' part of the theorem.
 One has that the time needed for the closed-loop system \eqref{eq:closedS0} to converge to $B^0_{1+m}$ is at most equal to the time $T_+(m,\kappa_0)$ needed to converge to  $B^{\kappa_0}_{< r(m,\kappa_0)}$. By integrating \eqref{eq:diffV}, one derives that 
 $$
 T_+(m,\kappa_0)\leq \frac1{C\alpha(\kappa_0)r(m,\kappa_0)^{\alpha(\kappa_0)}}.
 $$
 A similar reasoning yields that the time $T_-(m,\kappa_0)$ needed to converge  from $B^{-\kappa_0}_{< r(m,-\kappa_0)}$ to the origin verifies the following
 $$
 T_-(m,\kappa_0)\leq \frac1{-C\alpha(-\kappa_0)r(m,-\kappa_0)^{\alpha(-\kappa_0)}}.
 $$
(Recall that $\alpha(-\kappa_0)=\frac{-\kappa_0}{2-\kappa_0}<0$.)
It remains to upper bound the time $T_0(m,\kappa_0)$ needed to ``cross'' $B^0_{1-m,1+m}$. For that purpose, choose $m$ and $\kappa_0$ small enough so that 
\begin{equation}\label{eq:est-cross}
M_\delta(m,\kappa_0):=\max_{x\in B^0_{1-m,1+m}}\vert \delta(x)\vert\leq \frac{C(1-m)}2.
\end{equation}
In that case, \eqref{eq:diff0} becomes 
$\dot V\leq -CV/2$ and one gets
$$
T_0(m,\kappa_0)\leq \frac{-2\ln(2m)}{C}.
$$
We conclude that  the closed-loop system \eqref{eq:closedS0} is globally-fixed time stable with respect to the origin in settling time less than or equal to $T(m,\kappa_0)$ given by 
$$
T(m,\kappa_0):=T_+(m,\kappa_0)+T_0(m,\kappa_0)+T_-(m,\kappa_0).
$$
One has then \eqref{eq:est-globalF} and this concludes the proof of the theorem.

\end{proof}

\begin{remark}The above result is the counterpart of Lemma 2 in \cite{LREPP-2018} for our feedback law  $\omega_\kappa^{H}$. Note that in that reference, the statements of Lemma 2 and Theorem 4 as well as the argument of Lemma 2 consider the euclidean norm $\Vert x\Vert$ instead of $B^0_{1}$ in the definition of $\kappa(\cdot)$. As one can see from the above argument, using that norm cannot not provide the required results. However \cite{LREPP-2018} does consider the correct controller in Lemma 3 and in the last section of the corresponding reference.
\end{remark}
It remains to use a standard time rescaling technic with homogeneity (cf. \fixmeyc{\cite{Hong-and-al-2005}} and \cite{HLCH-2017}) to pass from the result of fixed-time stability contained in Theorem~\ref{th:TH1} to a result about prescribed-time stability.
\begin{Theorem}\label{th:TH2}
Let $m\in (0,1)$, $\kappa_0\in (0,1/n)$ defined in Theorem~\ref{th:TH1}
and the feedback law $\omega_{\kappa(x)}^{H}(x)$ defined in \eqref{eq:feed00}
which renders System \eqref{eq:n-int-U} globally fixed-time stable at the origin in settling time less than or equal to $T(m,\kappa_0)$ defined in \eqref{eq:est-globalF}. Then, given any $T>0$, the 
the feedback law $\omega_{\kappa(D_\lambda x)}^{H}(D^\rf_{\lambda} x)$ renders System \eqref{eq:n-int-U} globally fixed-time stable at the origin in settling time less than or equal to $T$ as soon as $\mu\geq T(m,\kappa_0)/T$.
\end{Theorem}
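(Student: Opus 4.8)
The plan is to invoke the classical homogeneity-based time-rescaling argument (see \cite{HLCH-2017}), resting on the two identities recorded in \eqref{eq:dlam}: for the weight $\rf=(n-i+1)_{1\le i\le n}$ one has $D^\rf_\mu J_n (D^\rf_\mu)^{-1}=\mu J_n$ and $D^\rf_\mu e_n=\mu e_n$. This is the dilation for which $J_n$ is homogeneous of degree $-1$, and I want to stress that it is \emph{not} the state-dependent family $\big(D^{\mathbf{r}(\kappa)}_{\lambda}\big)_{\lambda>0}$ underlying the Hong controller; the whole point is that conjugating $J_n$ (and scaling $e_n$) by $D^\rf_\mu$ multiplies the vector field by $\mu$, which amounts to a time change by the factor $\mu$.

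Concretely, I would fix $T>0$ and $\mu\ge T(m,\kappa_0)/T$, take any solution $x(\cdot)$ of \eqref{eq:n-int-U} closed by $u=\omega_{\kappa(D^\rf_\mu x)}^{H}(D^\rf_\mu x)$ starting from an arbitrary $x_0\in\R^n$, and set $\xi(t):=D^\rf_\mu x(t)$. Differentiating and using \eqref{eq:dlam} together with $u(x)=\omega_{\kappa(\xi)}^{H}(\xi)$ gives
\[
\dot\xi = D^\rf_\mu J_n (D^\rf_\mu)^{-1}\xi + u(x)\,D^\rf_\mu e_n = \mu\big(J_n\xi+\omega_{\kappa(\xi)}^{H}(\xi)\,e_n\big)=\mu\,F(\xi),
\]
where $F$ is exactly the closed-loop vector field of Theorem~\ref{th:TH1} (in the case $\underline{b}=\overline{b}=1$, i.e.\ for the pure chain). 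Then the time change $\eta(\tau):=\xi(\tau/\mu)$ produces a solution of $\dot\eta=F(\eta)$ with $\eta(0)=D^\rf_\mu x_0$, so by Theorem~\ref{th:TH1} one has $\eta(\tau)=0$ for all $\tau\ge T(m,\kappa_0)$; unwinding, $\xi(t)=0$ and hence $x(t)=(D^\rf_\mu)^{-1}\xi(t)=0$ for all $t\ge T(m,\kappa_0)/\mu\le T$. Since the settling-time bound in Theorem~\ref{th:TH1} is uniform over initial data (and, the feedback being merely continuous, I would run this argument solution by solution rather than rely on uniqueness), the closed loop in the new variable is globally finite-time converging with settling time uniformly $\le T$.

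What is left is to upgrade ``finite-time converging'' to ``fixed-time stable at the origin'', i.e.\ to add Lyapunov (hence asymptotic) stability, which I would inherit from Theorem~\ref{th:TH1}: $x\mapsto D^\rf_\mu x$ is a homeomorphism of $\R^n$ fixing the origin and $t\mapsto\mu t$ is a time reparametrisation, so the $\varepsilon$--$\delta$ estimates and the non-blow-up of trajectories transfer verbatim through the linear change of coordinates (compact sublevel sets of $V_{\kappa_0}$ map to compact sublevel sets of $V_{\kappa_0}\circ D^\rf_\mu$, which handles global existence). I do not expect a real obstacle here; the only genuinely delicate part is the bookkeeping — using the \emph{correct} dilation $D^\rf_\mu$ of \eqref{eq:dlam} (degree $-1$ for $J_n$, scaling $\mu$ for $e_n$) rather than the controller's $D^{\mathbf{r}(\kappa)}_\mu$, and keeping $\kappa(\cdot)$ and $\omega^{H}$ evaluated at the \emph{same} dilated argument $D^\rf_\mu x$ so that the $\xi$-dynamics collapses exactly to $\mu\,F(\xi)$ with $F$ the vector field of Theorem~\ref{th:TH1}.
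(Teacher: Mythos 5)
Your proposal is correct and is essentially the paper's own argument: the paper also sets $y(s)=D^\rf_\mu x(t)$ with the rescaled time $s=\mu t$, observes that $y$ solves the closed-loop system of Theorem~\ref{th:TH1} and hence settles within $T(m,\kappa_0)$, and concludes that $x$ settles within $T(m,\kappa_0)/\mu\leq T$. You merely spell out the conjugation identities from \eqref{eq:dlam} and the transfer of Lyapunov stability, which the paper leaves implicit.
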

\begin{proof}
For $\lambda>0$, one sets $y(s)=D_\lambda x(t)$ with the time scale $s=\lambda t$. One deduces at once that $y$ converges in finite time to the origin with a settling time upper bounded by $T(m,\kappa_0)$ as well as  $x$, with a settling time upper bounded by $T(m,\kappa_0)/\lambda$. To guarantee that the latter is less than or equal to $T$, it is enough to choose $\lambda$ as stated. 

\end{proof}

\subsection{Explicit determination of the main parameters}\label{ss:explicit}
In order to fully compare our controller $u=\omega^H_{\kappa(x)}$, with $x\mapsto \kappa(x)$ given in Definition~\ref{def-Our--controller} with the controller provided in  \cite{LREPP-2018}, we must explain how to choose the parameters $m\in (0,1)$ and $\kappa_0\in \left(0,\frac1{2n}\right)$ introduced in Definition~\ref{def-Our--controller}. 
We also have to estimate the quantities $r(m,\kappa_0)$ and $r(m,-\kappa_0)$ in order to get a hold on the upper bound $T(m,\kappa_0)$ of the settling time to reach precise estimates of the rescaling factor $\lambda$ appearing in Theorem~\ref{th:TH2}. 

For that purpose, we first need an explicit bound on the coordinates of $x\in B^\kappa_{1-m,1+m}$ with $-\frac1{2n}\leq\kappa\leq\frac1{2n}$. This is the content of the next lemma.
\begin{lemma}\label{lem:est-x}
Let $m\in (0,1)$. Then, there exists an explicit positive constant $X_n$ (depending on $m$ and the $\ell_j$'s)
such that, for $0\leq j\leq n$, $x\in B^\kappa_{1-m,1+m}$ and $-\frac1{2n}\leq\kappa\leq\frac1{2n}$, $\vert x_j\vert,\vert v_{j}\vert \leq X_n$.
\end{lemma}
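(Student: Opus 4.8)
The plan is to exploit the $\mathbf{r}(\kappa)$-homogeneity of $V_\kappa$ (of degree $2+\kappa$, Proposition~\ref{prop:Hong-gen}): the constraint $V_\kappa(x)\le 1+m$ (which holds throughout $B^\kappa_{1-m,1+m}$) forces the $\kappa$-homogeneous norm of $x$ to stay below an explicit constant, which yields explicit bounds on the coordinates $x_j$; the bounds on the $v_j$ then follow by an explicit induction through the recursion \eqref{eq:v}.

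First I would establish a lower bound $V_\kappa(x)\ge c\,\rho_\kappa(x)^{2+\kappa}$ that is uniform in $\kappa\in[-\tfrac1{2n},\tfrac1{2n}]$, where $\rho_\kappa(x):=\big(\sum_{j=1}^n|x_j|^{2/r_j(\kappa)}\big)^{1/2}$ is the $\kappa$-homogeneous norm, so that for $x\neq 0$ one has $x=D^{\mathbf{r}(\kappa)}_{\rho_\kappa(x)}\xi$ with $\xi$ on the $\kappa$-weighted unit sphere $\Sigma_\kappa:=\{\xi:\sum_j|\xi_j|^{2/r_j(\kappa)}=1\}\subset S^n$ (see \eqref{eq:Sj}). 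To obtain the uniform constant $c$, I would reparametrize $\Sigma_\kappa$ by the fixed Euclidean sphere $S^{n-1}$ through $\xi_j=\lceil\zeta_j\rfloor^{r_j(\kappa)}$ and note that $(\kappa,\zeta)\mapsto V_\kappa\big((\lceil\zeta_j\rfloor^{r_j(\kappa)})_j\big)$ is continuous on the compact set $[-\tfrac1{2n},\tfrac1{2n}]\times S^{n-1}$ — the functions $v_{j-1}$ and the exponents $\beta_{j-1}+1=(2+\kappa)/r_j(\kappa)$ appearing in \eqref{eq:fdL-Hong} depend continuously on $(\kappa,x)$ — and strictly positive there by positive definiteness of each $V_\kappa$; hence $c:=\min_{(\kappa,\zeta)}V_\kappa\big((\lceil\zeta_j\rfloor^{r_j(\kappa)})_j\big)>0$ is an explicit constant and $V_\kappa(x)=\rho_\kappa(x)^{2+\kappa}V_\kappa(\xi)\ge c\,\rho_\kappa(x)^{2+\kappa}$.

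Then, for $x\in B^\kappa_{1-m,1+m}$ I get $c\,\rho_\kappa(x)^{2+\kappa}\le 1+m$, so $\rho_\kappa(x)\le\rho_*:=\max\{1,(1+m)/c\}^{1/(2-\frac1{2n})}$ (using $2+\kappa\ge 2-\tfrac1{2n}$). Since $\xi\in\Sigma_\kappa$ implies $|\xi_j|^{2/r_j(\kappa)}\le 1$, hence $|\xi_j|\le 1$, and since $r_j(\kappa)=1+(j-1)\kappa\in[\tfrac12,\tfrac32]$ for $|\kappa|\le\tfrac1{2n}$, I conclude $|x_j|=\rho_\kappa(x)^{r_j(\kappa)}|\xi_j|\le\max(\rho_*^{1/2},\rho_*^{3/2})=:X_n'$ for $1\le j\le n$. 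For the $v_j$'s I would argue by induction: $v_0=0$, and assuming $|v_{j-1}|\le\bar v_{j-1}$ (explicit), the recursion \eqref{eq:v} reads $|v_j|=\ell_j\big|\lceil x_j\rfloor^{\beta_{j-1}}-\lceil v_{j-1}\rfloor^{\beta_{j-1}}\big|^{(r_j+\kappa)/(r_j\beta_{j-1})}$, where both exponents $\beta_{j-1}=(2+\kappa-r_j)/r_j$ and $(r_j+\kappa)/(r_j\beta_{j-1})$ stay in compact subsets of $(0,\infty)$ as $\kappa$ varies; using $|x_j|\le X_n'$, $|v_{j-1}|\le\bar v_{j-1}$, the triangle inequality and monotonicity of $t\mapsto t^\alpha$ (or the upper estimate in \eqref{eq:elem3}), one bounds the bracket and hence $|v_j|$ by an explicit $\bar v_j$ depending only on $m$, $n$ and $\ell_1,\dots,\ell_j$. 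Setting $X_n:=\max\{X_n',\bar v_1,\dots,\bar v_n\}$ (and noting $|v_0|=0\le X_n$) completes the proof.

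The one genuinely delicate point is the uniformity in $\kappa$ of the lower bound $c>0$: one must rule out that $\min_{\Sigma_\kappa}V_\kappa$ degenerates as $\kappa$ varies, which is exactly why I pull back the $\kappa$-dependent spheres $\Sigma_\kappa$ to the fixed sphere $S^{n-1}$ and invoke joint continuity over a compact parameter set. Everything else is explicit bookkeeping of constants through the homogeneous rescaling and the backstepping formula \eqref{eq:v}, which is precisely the quantitative control the proof of Proposition~\ref{prop:Hong-gen} was set up to provide.
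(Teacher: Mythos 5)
Your argument is mathematically sound, but it takes a genuinely different route from the paper. The paper proves the lemma by a single induction on $j$, exploiting that $V_\kappa=\sum_j W_{\kappa,j}$ with every summand $W_{\kappa,j}\geq 0$ (it is the integral of an increasing function vanishing at $v_{j-1}$): the one constraint $V_\kappa(x)\leq 1+m$ then bounds each $W_{\kappa,j}$ separately, and the closed formula \eqref{eq:Wj} turns this into a scalar inequality $\vert x_j\vert^{1+\beta_{j-1}}\leq (2+\beta_{j-1})X_{j-1}^{1+\beta_{j-1}}+(1+\beta_{j-1})X_{j-1}^{\beta_{j-1}}\vert x_j\vert+(1+\beta_{j-1})(1+m)$, solvable in closed form for $\vert x_j\vert$ given the inductive bound on $v_{j-1}$; the bound on $v_j$ then follows from \eqref{eq:v} exactly as in your last step. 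You instead bound all coordinates at once via the $\mathbf{r}(\kappa)$-homogeneity of $V_\kappa$ and a uniform lower bound $c=\min_{(\kappa,\zeta)}V_\kappa$ on the family of weighted spheres pulled back to $S^{n-1}$; the compactness/continuity reasoning there is correct (the exponents stay in compact subsets of $(0,\infty)$, so joint continuity and positive definiteness do give $c>0$), and it even yields a two-sided norm equivalence that the paper does not state. The trade-off is precisely the point of Subsection~\ref{ss:explicit} and Remark~\ref{rem:exp0}: the paper's recursive constants are computable directly from $m$, the $\beta_j$'s and the $\ell_j$'s, whereas your $c$ is only ``explicit'' as a well-defined minimum over a compact set, not as a closed-form quantity one could evaluate numerically without additional work; if you want your version to serve the same quantitative purpose (estimating $T(m,\kappa_0)$, $r(m,\pm\kappa_0)$, $\kappa_0(m)$), you would still need to produce a computable lower bound for $c$, at which point the paper's direct induction is the shorter path.
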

\begin{proof}
Fix $m\in (0,1)$, $x\in B^\kappa_{1-m,1+m}$ and $-\frac1{2n}\leq\kappa\leq\frac1{2n}$.
The proof of the lemma goes by induction on $j$, where we prove the statement with a constant $X_j$ explicitly depending on $m$ and the $\ell_j$'s.

This is clearly true for $j=1$ since 
$\vert v_1\vert=\ell_1\vert x_1\vert^{1+\kappa}$ and $\frac{\vert x_1\vert^{1+\beta_0}}{1+\beta_0}\leq V_\kappa(x)\leq 1+m$. Assume that the thesis holds true for $j-1\geq 1$. 
One then deduces from the definition of $W_{\kappa,j}$ in \eqref{eq:Wj} and the induction hypothesis that
$$
\vert x_j\vert^{1+\beta_{j-1}}\leq (2+\beta_{j-1})X_{j-1}^{1+\beta_{j-1}}+(1+\beta_{j-1})X_{j-1}^{\beta_{j-1}}\vert x_j\vert+(1+\beta_{j-1})(1+m).
$$
Since $\beta_{j-1}>0$, one deduces at once a first explicit bound for $x_j$ and then for $v_j$ by using \eqref{eq:v}. 

\end{proof}

The following lemma provides the required differences between useful quantities evaluated at any $\kappa\in [-\frac1{2n},\frac1{2n}]$ and $\kappa=0$.
For $0\leq j\leq n$, we introduce the notation $v^{\kappa}_j:=v_j$, where the latter has been defined in \eqref{eq:v}. 
\begin{lemma}\label{lem:est-om}
Let $m\in (0,1)$. Then there exists explicit positive constants $C^1_n,C^2_n$ (depending on $m$ and the $\ell_j$'s) such that, 
\begin{equation}\label{eq:diff-om}
\max\{\vert \omega_{\kappa}^{H}(x)-\omega_0^{H}(x)\ \mid\ -\frac1{2n}\leq\kappa\leq\frac1{2n},\ x\in B^\kappa_{1-m,1+m}\}
\leq C^1_n\vert \kappa\vert^{\min(1,r_n)},
\end{equation}
and 
\begin{equation}\label{eq:diff-V}
\max\{\vert V_{\kappa}(x)-V_0(x)\ \mid\ -\frac1{2n}\leq\kappa\leq\frac1{2n},\ x\in B^\kappa_{1-m,1+m}\}
\leq C^2_n\vert \kappa\vert^{\min(1,r_n)}.
\end{equation}
\end{lemma}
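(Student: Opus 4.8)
The plan is to exploit that at $\kappa=0$ every exponent occurring in Definition~\ref{def-Hong-controller} collapses to $1$: one has $r_j(0)=1$, $\beta_{j-1}(0)=1$, and the exponent $e_j(\kappa):=\frac{r_j(\kappa)+\kappa}{r_j(\kappa)\,\beta_{j-1}(\kappa)}$ appearing in \eqref{eq:v} satisfies $e_j(0)=1$; moreover, on $[-\frac{1}{2n},\frac{1}{2n}]$ the maps $\kappa\mapsto\beta_{j-1}(\kappa)$ and $\kappa\mapsto e_j(\kappa)$ are Lipschitz and take values in a fixed compact subset of $\RR_+^*$ (they are rational with no pole there). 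By Lemma~\ref{lem:est-x}, for $x\in B^\kappa_{1-m,1+m}$ and $|\kappa|\le\frac{1}{2n}$ all the numbers $x_j,\,v_j^\kappa$ --- hence also $\lceil x_j\rfloor^{\beta_{j-1}(\kappa)}$, $\lceil v_{j-1}^\kappa\rfloor^{\beta_{j-1}(\kappa)}$ and their difference --- lie in one fixed compact subset of $\RR$, so every constant below may be taken to depend only on $m$ and the $\ell_j$'s. The two elementary tools I would use are: \textbf{(i)} a \emph{Lipschitz-in-the-exponent} estimate $|\lceil t\rfloor^{\alpha}-\lceil t\rfloor^{\alpha'}|\le L\,|\alpha-\alpha'|$, valid for $t$ ranging in any fixed bounded set of $\RR$ and $\alpha,\alpha'$ in any fixed compact subset of $\RR_+^*$, which follows from the finiteness of $\sup_{0<u\le M}u^{s}|\ln u|$ (for $s$ bounded away from $0$) together with $\lceil0\rfloor^{\alpha}\equiv0$; and \textbf{(ii)} the elementary inequality \eqref{eq:elem3}. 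The key observation is that one need never pay the Hölder loss of \eqref{eq:elem3}, because one may always straighten an exponent to its value $1$ at $\kappa=0$ \emph{before} comparing bases, and $\lceil\cdot\rfloor^{1}$ is the identity on $\RR$.

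The core is an induction on $1\le j\le n$ proving
\[
\max\bigl\{\,|v_j^\kappa-v_j^0|\ :\ |\kappa|\le\tfrac{1}{2n},\ x\in B^\kappa_{1-m,1+m}\,\bigr\}\ \le\ C_j\,|\kappa| .
\]
For $j=1$ this is immediate from $v_1^\kappa=-\ell_1\lceil x_1\rfloor^{1+\kappa}$ and $v_1^0=-\ell_1 x_1$ via tool \textbf{(i)} with exponents $1+\kappa$ and $1$. For the step, I would write $v_j^\kappa=-\ell_j\,\bigl\lceil\,\lceil x_j\rfloor^{\beta_{j-1}(\kappa)}-\lceil v_{j-1}^\kappa\rfloor^{\beta_{j-1}(\kappa)}\,\bigr\rfloor^{e_j(\kappa)}$ and pass to $v_j^0$ in four moves, each contributing $O(|\kappa|)$ with an explicit constant: replace the outer exponent $e_j(\kappa)$ by $1$ (tool \textbf{(i)}, leaving $-\ell_j(\lceil x_j\rfloor^{\beta_{j-1}(\kappa)}-\lceil v_{j-1}^\kappa\rfloor^{\beta_{j-1}(\kappa)})$); replace $\beta_{j-1}(\kappa)$ by $1$ in $\lceil x_j\rfloor^{\beta_{j-1}(\kappa)}$ and then in $\lceil v_{j-1}^\kappa\rfloor^{\beta_{j-1}(\kappa)}$ (tool \textbf{(i)} twice, leaving $-\ell_j(x_j-v_{j-1}^\kappa)$); and finally replace $v_{j-1}^\kappa$ by $v_{j-1}^0$ (induction hypothesis). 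Adding the four errors gives level $j$. Taking $j=n$ yields $|\omega_{\kappa}^{H}(x)-\omega_0^{H}(x)|=|v_n^\kappa-v_n^0|\le C_n|\kappa|$; since $0\le|\kappa|\le\frac{1}{2n}<1$ and $\min(1,r_n)\le1$, one has $|\kappa|\le|\kappa|^{\min(1,r_n)}$, which is \eqref{eq:diff-om} with $C^1_n=C_n$.

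For \eqref{eq:diff-V}, I would write $V_\kappa-V_0=\sum_{j=1}^n(W_{\kappa,j}-W_{0,j})$ with $W_{\kappa,j}$ as in \eqref{eq:Wj}, and bound each summand by $O(|\kappa|)$ with the same two tools: the prefactor $\frac{1}{\beta_{j-1}(\kappa)+1}$ is Lipschitz in $\kappa$; the powers $|x_j|^{\beta_{j-1}(\kappa)+1}$, $|v_{j-1}^\kappa|^{\beta_{j-1}(\kappa)+1}$ and $\lceil v_{j-1}^\kappa\rfloor^{\beta_{j-1}(\kappa)}$ are first straightened (by tool \textbf{(i)}, using $\beta_{j-1}(0)=1$) to exponents $2$, $2$, $1$, then compared as bases with the help of the bound $|v_{j-1}^\kappa-v_{j-1}^0|\le C_{j-1}|\kappa|$ just obtained; and $x_j-v_{j-1}^\kappa$ is a bounded factor differing from $x_j-v_{j-1}^0$ by $O(|\kappa|)$. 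Summing over $j$ and invoking $|\kappa|\le|\kappa|^{\min(1,r_n)}$ once more yields \eqref{eq:diff-V} with $C^2_n$ explicit in $m$ and the $\ell_j$'s.

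The main difficulty is bookkeeping rather than conceptual: one must check that tool \textbf{(i)} holds with a constant $L$ \emph{uniform} over the relevant compact set of bases, the only delicate point being bases close to $0$, which is exactly where finiteness of $\sup_{u\in(0,M]}u^{s}|\ln u|$ (for $s$ bounded away from $0$) is needed; and one must consistently straighten each exponent to its value at $\kappa=0$ before modifying the corresponding base, so that no map $|\cdot|^{\alpha}$ with $\alpha<1$ ever acts on a quantity of size $O(|\kappa|)$ (which would only give the weaker rate $|\kappa|^{\alpha}$). Done this way, all estimates are in fact Lipschitz in $\kappa$, hence a fortiori of the claimed form $C|\kappa|^{\min(1,r_n)}$.
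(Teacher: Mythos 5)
Your argument is correct, and at the structural level it follows the paper: an induction on $j$ bounding $\vert v_j^{\kappa}-v_j^{0}\vert$, using the compactness supplied by Lemma~\ref{lem:est-x} and elementary inequalities for the maps $t\mapsto\lceil t\rfloor^{\alpha}$, then treating $V_\kappa-V_0$ term by term through \eqref{eq:Wj}. The genuine difference is the order in which you split the increment. The paper writes $v_j^{\kappa}-v_j^{0}=-\ell_j(F+G)$, performs the base change $v_{j-1}^{\kappa}\to v_{j-1}^{0}$ \emph{at the $\kappa$-dependent exponents} (term $F$), which forces an appeal to \eqref{eq:elem3} and costs the H\"older power $\nu_j\le1$, and only then handles the exponent change (term $G$) via the Lipschitz-in-the-exponent fact $\vert x^{\alpha}-x\vert\le\vert\alpha-1\vert\,\vert\ln x\vert\,x^{\min(1,\alpha)}$ -- which is exactly your tool (i). Because of the H\"older loss in $F$, the paper's rate degrades through the induction (its constant $C^1_j=\ell_j\big(B^2C_{j-1}^{\nu_{j-1}}+D_j\big)$ carries composed exponents), which is why the statement is only of the form $\vert\kappa\vert^{\min(1,r_n)}$. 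You instead straighten every exponent to its value $1$ at $\kappa=0$ \emph{before} comparing bases, so the base change passes through the identity (or through $u\mapsto u^2$ in the $V$-part) and stays linear in $\vert v_{j-1}^{\kappa}-v_{j-1}^{0}\vert$; this avoids \eqref{eq:elem3} altogether and yields the uniform Lipschitz rate $C_j\vert\kappa\vert$, which is slightly stronger than the stated bound and implies it since $\vert\kappa\vert\le\frac1{2n}<1$ and $\min(1,r_n)\le1$. Your supporting checks are the right ones and do hold: $\beta_{j-1}(\kappa)=\frac{1+(2-j)\kappa}{1+(j-1)\kappa}$ and the outer exponent $\frac{r_j+\kappa}{r_j\beta_{j-1}}=\frac{1+j\kappa}{1+(2-j)\kappa}$ are Lipschitz on $[-\frac1{2n},\frac1{2n}]$ with values in a fixed compact subset of $\RR_+^*$, and the uniform constant in tool (i) comes from $\sup_{0<u\le M}u^{s}\vert\ln u\vert<\infty$ for $s$ bounded away from $0$, so all constants are explicit in $m$ and the $\ell_j$'s as required. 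The only cosmetic deviation is that you run the induction over $x\in B^{\kappa}_{1-m,1+m}$ rather than over the paper's $j$-dimensional sets $B^{\kappa}_{j,m}$; since $v_j$ depends only on $x^{(j)}$ and Lemma~\ref{lem:est-x} bounds all coordinates and all $v_j^{\kappa}$ (hence also the linear functions $v_j^{0}$) on that set, this is harmless for the statement as given. In short: same skeleton, different decomposition; your ordering buys a cleaner bookkeeping and a marginally stronger (Lipschitz) estimate, while the paper's ordering is what produces the $\min(1,r_n)$ exponent in the statement.
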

\begin{proof}
Fix $m\in (0,1)$. We will actually prove by induction on $1\leq j\leq n$, that there exists an explicit positive constant $C^1_j$ (depending on $m$ and $\ell_1,\cdots,\ell_j$) such that, 
\begin{equation}\label{eq:diff-om}
\max\{\vert v_j^{\kappa}(x)-v_j^{0}(x)\ \mid\
-\frac1{2n}\leq\kappa\leq\frac1{2n},\ x^{(j)}\in B^\kappa_{j,m}\}
\leq C^1_j\vert \kappa\vert^{\min(1,r_j)},
\end{equation}
where $B^\kappa_{j,m}$ is the set of $x^{(j)}\in\mathbb{R}^j$ for which $1-m\leq V_{\kappa,j}(x^{(j)})\leq 1+m$.

The result is immediate for $j=0$ and hence we turn to the inductive step for $1\leq j\leq n$, assuming that the hypothesis holds for $j-1$. 

Let $-\frac1{2n}\leq\kappa\leq\frac1{2n}$ and $x^{(j)}\in B^\kappa_{j,m}$. Then
$$
v_j^{\kappa}(x)-v_j^{0}(x)=-\ell_j
\lceil \lceil x_{j} \rfloor^{\beta_{j-1}
 } - \lceil v^{\kappa}_{j-1} \rfloor^{\beta_{j-1} } \rfloor^{\frac{r_{j+1}}{r_j\beta_{j-1}}}+\ell_j(x_j-v^{0}_{j-1} )=-\ell_j(F+G),
$$
where 
\begin{eqnarray*}
F&=&
\lceil \lceil x_{j} \rfloor^{\beta_{j-1}
 } - \lceil v^{\kappa}_{j-1} \rfloor^{\beta_{j-1} } \rfloor^{\frac{r_{j+1}a}{r_j\beta_{j-1}}}-
\lceil \lceil x_{j} \rfloor^{\beta_{j-1}
 } - \lceil v^{0}_{j-1} \rfloor^{\beta_{j-1} } \rfloor^{\frac{r_{j+1}a}{r_j\beta_{j-1}}},\\
 G&=&
\lceil \lceil x_{j} \rfloor^{\beta_{j-1}
 } - \lceil v^{0}_{j-1} \rfloor^{\beta_{j-1} } \rfloor^{\frac{r_{j+1}}{r_j\beta_{j-1}}}-(x_j-v^{0}_{j-1} ).
 \end{eqnarray*}
By applying \eqref{eq:elem3} with $\alpha=\frac{r_{j+1}}{r_j\beta_{j-1}}$, then with $\alpha=\beta_{j-1}$ and $A,B$ depending on $X_n$ obtained in Lemma~\ref{lem:est-x}, we get 
\begin{equation}\label{eq:F}
\vert F\vert\leq B\Big\vert \lceil v^{\kappa}_{j-1} \rfloor^{\beta_{j-1}}-\lceil v^{0}_{j-1} \rfloor^{\beta_{j-1}}\Big\vert^{\min(1,\frac{r_j + \kappa}{r_j\beta_{j-1}})}\leq 
B^2\vert v^{\kappa}_{j-1}- v^{0}_{j-1}\vert^{\nu_j},
\end{equation}
where $\nu_j:=\min(1,\beta_{j-1})\min(1,\frac{r_j + \kappa}{r_j\beta_{j-1}})\leq 1$.

To bound $G$, we consider 
\begin{eqnarray*}
M&:=&\max(\vert x_j\vert, \vert v^{0}_{j-1}\vert),\
\varepsilon:=sign(x_jv^{0}_{j-1})\in\{-1,1\},\\
\tau&:=&\frac{\max(\vert x_j\vert, \vert v^{0}_{j-1}\vert)}M\in [0,1],\
N:=1+\varepsilon t^{\beta{j-1}},
\end{eqnarray*}
where we have assumed with no loss of generality that $M>0$.

An easy computation yields that
\begin{equation}\label{eq:G0}
G=(M^{\frac{r_{j+1}}{r_j}}-M)N^{\frac{r_{j+1}}{r_j\beta_{j-1}}}
+M\big(N^{\frac{r_{j+1}}{r_j\beta_{j-1}}}-N+\varepsilon (t^{\beta_{j-1}}-t)\big).
\end{equation}

We now use the following elementary fact: for non negative $x$ and $\alpha>0$,
one has 
$$
\vert x^{\alpha}-x\vert\leq \vert \alpha-1\vert \ln(x)x^{\min(1,\alpha)}.
$$
By applying that fact to \eqref{eq:G0}, we deduce that there exists an explicit positive constant $D_j$ (depending on $m$ and $\ell_1,\cdots,\ell_{j-1}$) such that 
$\vert G\vert\leq \vert D_j\vert \kappa\vert$. From \eqref{eq:F} and the previous inequality, we get that
$$
\vert v_j^{\kappa}(x)-v_j^{0}(x)\vert\leq \ell_l\big(B^2\vert v^{\kappa}_{j-1}- v^{0}_{j-1}\vert^{\nu_j}+D_j\vert \kappa\vert\big).
$$
By applying the induction hypothesis on $\vert v^{\kappa}_{j-1}- v^{0}_{j-1}\vert$,
we prove the inductive step with $C^1_j:=l_j(B^2C_{j-1}^{\nu_{j-1}}+D_j)$. This concludes the proof of \eqref{eq:diff-om}.

We now turn to the proof of \eqref{eq:diff-V}. It is enough to prove the result for one single $W_{\kappa,j}$. Hence let $-\frac1{2n}\leq\kappa\leq \frac1{2n}$ and $x^{(j)}\in B^\kappa_{j,m}$. One has 
\begin{eqnarray*}
W_{\kappa,j}(x^{(j)})-W_{0,j}(x^{(j)})&=&
\frac{\beta_{j-1}-1}{\beta_{j-1}+1}\Big(\vert v_{j-1}^{\kappa}\vert^{\beta_{j-1}+1}-\vert v_{j-1}^{0}\vert^{\beta_{j-1}+1}\Big)\\
&+&\frac12\Big(\vert x_{j}^{\kappa}\vert^{\beta_{j-1}+1}-x_j^2-(\vert v_{j-1}^{\kappa}\vert^{\beta_{j-1}+1}-(v_{j-1}^0)^2)\Big)\\
&-&x_j\Big( \lceil v^{\kappa}_{j-1} \rfloor^{\beta_{j-1}}- \lceil v^{0}_{j-1} \rfloor^{\beta_{j-1}}+(v^{\kappa}_{j-1}-v^{0}_{j-1})\Big)\\
&+&\vert v_{j-1}^{\kappa}\vert^{\beta_{j-1}+1}-(v_{j-1}^0)^2).
\end{eqnarray*}
Following the same type of estimates used to derive \eqref{eq:diff-om}, one gets \eqref{eq:diff-V}.

\end{proof}

We can now provide explicit bounds on $\kappa_0$, for the results of the previous section to hold.
\begin{proposition}\label{prop:estim-k}
Let $m\in (0,1)$. Then there is an explicit $\kappa_0(m)\in [-\frac1{2n},-\frac1{2n}]$ such that, for every $\kappa_0\in (0,\kappa_0(m))$, the statements of Theorem~\ref{th:TH1} and Theorem~\ref{th:TH2}  hold true.
\end{proposition}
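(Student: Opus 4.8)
The plan is to unwind exactly which smallness conditions on $\kappa_0$ (given $m$ fixed) were invoked in the proofs of Theorem~\ref{th:TH1} and Theorem~\ref{th:TH2}, and to show that each of them is implied by an \emph{explicit} inequality of the form $\vert\kappa_0\vert^{\min(1,r_n)}\leq (\text{explicit constant depending on }m\text{ and the }\ell_j)$, thanks to the quantitative estimates of Lemma~\ref{lem:est-om}. Since Theorem~\ref{th:TH2} adds no new constraint on $\kappa_0$ beyond those of Theorem~\ref{th:TH1} (the rescaling factor $\lambda$ is chosen \emph{after} $m,\kappa_0$ are fixed), it suffices to treat Theorem~\ref{th:TH1}.

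First I would isolate the two places in the proof of Theorem~\ref{th:TH1} where ``$\kappa_0$ small enough'' was used. The first is the requirement that the closed-loop vector field \eqref{eq:closedS0} be genuinely $\mathbf{r}(\kappa_0)$-homogeneous of degree $2+\kappa_0$ on $B^0_{>1+m}$ and $\mathbf{r}(-\kappa_0)$-homogeneous of degree $2-\kappa_0$ on $B^0_{<1-m}$; this is automatic for \emph{any} $\kappa_0\in(0,\tfrac1{2n})$ by the definition \eqref{eq:feed00} of $\kappa(\cdot)$ together with the fact that $V_0$ is a constant of the construction, so no constraint arises here. The second, and the real one, is the negativity of $\dot V_0$ in the crossing region $B^0_{1-m,1+m}$, which was reduced to the estimate \eqref{eq:est-cross}, namely
\begin{equation*}
M_\delta(m,\kappa_0)=\max_{x\in B^0_{1-m,1+m}}\vert\delta(x)\vert\leq\frac{C(1-m)}{2},
\qquad \delta(x)=\overline{b}\,\vert\omega_{\kappa(x)}^H(x)-\omega_0^H(x)\vert .
\end{equation*}
Here is where Lemma~\ref{lem:est-om} enters: on $B^0_{1-m,1+m}$ we have (using Lemma~\ref{lem:est-x} to control the coordinates, and the fact that on this set $V_{\kappa(x)}$ stays within an explicit window of $1$, hence $x\in B^{\kappa(x)}_{1-m',1+m'}$ for an explicit $m'=m'(m)$) the bound $\vert\omega_{\kappa(x)}^H(x)-\omega_0^H(x)\vert\leq C^1_n\vert\kappa_0\vert^{\min(1,r_n)}$. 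Therefore \eqref{eq:est-cross} is implied by
\begin{equation*}
\kappa_0(m):=\Big(\frac{C(1-m)}{2\,\overline{b}\,C^1_n}\Big)^{1/\min(1,r_n)},
\end{equation*}
and for every $\kappa_0\in(0,\kappa_0(m))$ (and $\kappa_0<\tfrac1{2n}$, which we also impose, shrinking $\kappa_0(m)$ if necessary) the condition \eqref{eq:est-cross} holds. Since $C^1_n$ and the constant $C$ of \eqref{eq:diffV} are explicit, so is $\kappa_0(m)$.

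Then I would check that with this choice all remaining ingredients of Theorem~\ref{th:TH1} go through verbatim: well-posedness of trajectories of \eqref{eq:closedS0} (argued purely from continuity and the invariance of sublevel sets of $V_{\kappa_0}$, no constraint on $\kappa_0$), finite-time convergence to $B^0_{1+m}$ and then to the origin (from the homogeneity statements and Lemma~\ref{le:notreL1}), and the settling-time bound \eqref{eq:est-globalF} (obtained by integrating \eqref{eq:diffV} on the two homogeneous regions and using $\dot V_0\leq -CV_0/2$ on the crossing region, which is precisely what \eqref{eq:est-cross} guarantees). Finally, invoking Theorem~\ref{th:TH2} with any $T>0$ and $\mu\geq T(m,\kappa_0)/T$ gives prescribed-time stability, completing the proof.

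I expect the only subtle point — the ``main obstacle'' — to be the bookkeeping in the previous paragraph: one must verify that a point $x$ with $V_0(x)\in[1-m,1+m]$ indeed lies in $B^{\kappa(x)}_{1-m',1+m'}$ for an explicit $m'$, so that Lemma~\ref{lem:est-om} is applicable. This follows from \eqref{eq:diff-V} of Lemma~\ref{lem:est-om}, which bounds $\vert V_{\kappa(x)}(x)-V_0(x)\vert\leq C^2_n\vert\kappa_0\vert^{\min(1,r_n)}$, so one can take $m'=m+C^2_n\vert\kappa_0\vert^{\min(1,r_n)}$ and, after possibly shrinking $\kappa_0(m)$ so that $m'<1$, close the loop. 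Everything else is a routine substitution of the explicit constants $C$, $C^1_n$, $C^2_n$, $X_n$ already produced in Lemmas~\ref{lem:est-x} and~\ref{lem:est-om}.
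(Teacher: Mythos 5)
Your overall strategy is the same as the paper's: the only smallness constraint on $\kappa_0$ comes from making $\dot V_0$ negative on the crossing region $B^0_{1-m,1+m}$, and this is turned into an explicit threshold by bounding $\delta(x)$ with Lemma~\ref{lem:est-om}; your extra bookkeeping (that $V_0(x)\in[1-m,1+m]$ puts $x$ in $B^{\kappa(x)}_{1-m',1+m'}$ via \eqref{eq:diff-V}) is a legitimate point that the paper glosses over. There is, however, one quantitative slip: you take the condition \eqref{eq:est-cross} literally and claim that $\max\vert\delta\vert\le C(1-m)/2$ alone yields $\dot V_0\le -CV_0/2$, but in \eqref{eq:diff0} the term $\delta(x)$ is multiplied by $2\vert x^TPe_n\vert$, which is not bounded by $1$. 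The paper's proof of this proposition handles it by Cauchy--Schwarz in the $P$-inner product, $\dot V_0\le -CV_0+2\sqrt{V_0}\sqrt{V_0(e_n)}\vert\delta(x)\vert$, so that on the crossing region one needs $2\sqrt{1+m}\sqrt{V_0(e_n)}\,C^1_n\kappa_0^{1-(n-1)/2n}\le C(1-m)/2$, giving $\kappa_0(m)=\bigl(C(1-m)/(4\sqrt{1+m}\sqrt{V_0(e_n)}C^1_n)\bigr)^{2n/(n+1)}$ rather than your $\bigl(C(1-m)/(2\overline{b}C^1_n)\bigr)^{1/\min(1,r_n)}$; relatedly, your exponent $1/\min(1,r_n)$ depends on $\kappa_0$ itself, whereas one should use the uniform bound $r_n\ge\frac{n+1}{2n}$ valid for $\vert\kappa\vert\le\frac1{2n}$, as the paper does. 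Finally, for the Theorem~\ref{th:TH2} part you are right that no new constraint on $\kappa_0$ arises, but the paper's proof also derives explicit lower bounds for $r(m,\kappa_0)$ and $r(m,-\kappa_0)$ from \eqref{eq:diff-V} (e.g.\ $\vert r(m,\kappa_0)-(1+m)\vert\le C^2_n\kappa_0^{1-(n-1)/2n}$), which is what makes the settling-time bound \eqref{eq:est-globalF} and hence the rescaling factor $\mu\ge T(m,\kappa_0)/T$ explicit; your proposal leaves that piece out. These are repairs of constants and of one missing estimate, not of the structure of the argument.
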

\begin{proof}
To determine $\kappa_0(m)$, we rewrite \eqref{eq:diff0} as follows,
$$
\dot V_0\leq -CV_0+2\sqrt{V_0}\sqrt{V_0(e_n)}\vert \delta(x)\vert.
$$
The constant $C$ above has been characterized in \eqref{eq:diffV}.

Along trajectories of System \eqref{eq:n-int-U} closed by  the feedback law given by $\omega_{\kappa(x)}^{H}(x)$ inside $B^0_{1-m,1+m}$, one gets by using $1-m\leq V_{0}(x)\leq 1+m$ and \eqref{eq:diff-om} that 
$$
CV_0\geq C(1-m),\quad 2\sqrt{V_0}\sqrt{V_0(e_n)}\vert \delta(x)\vert\leq 2\sqrt{1+m}\sqrt{V_0(e_n)}C^1_n
\kappa_0^{1-(n-1)/2n}.
$$
One chooses then $\kappa_0(m)>0$ so that $\dot V_0\leq -\frac{CV_0}2$ inside $B^0_{1-m,1+m}$, which yields that 
$$
\kappa_0(m):=\Big(\frac{C(1-m)}{4\sqrt{1+m}\sqrt{V_0(e_n)}C^1_n}\Big)^{\frac{2n}{n+1}}.
$$
As for Theorem~\ref{th:TH2}, the only task to complete for an explicit characterization of the parameter $\lambda$ appearing in the statement consists in estimating explicitly
lower bounds for $r(m,\kappa_0)$ and $r(m,-\kappa_0)$. We provide indications for only $r(m,\kappa_0)$.
By definition, every $x\in  B^{\kappa_0}_{< r(m,\kappa_0)}$ belongs to $B_{<1+m}^0$. There exists $x\in B^{\kappa_0}_{r(m,\kappa_0)}\cap B_{\leq 1+m}^0$ and then
$\vert r(m,\kappa_0)-(1+m)\vert\leq C^2_n\kappa_0^{1-(n-1)/2n}$ according to 
\eqref{eq:diff-V}. One deduces immediately an explicit lower bound for $r(m,\kappa_0)$

\end{proof}
\subsection{ISS-type of result}\label{ss:iss}
In this section, we provide the second step for our partial solution of the prescribed-time stabilization of the $n$-th order chain of integrators in presence of disturbances. More precisely,  the aim consists in stabilizing \eqref{eq:n-int-U} with a static feedback law $u=F(x)$, in a robust manner, i.e., with respect to measurement noise and external disturbances. The corresponding $n$-th order perturbed chain of integrators is given by
  \beq\label{eq:n-int-P}
   \dot x=J_nx+b(x)F(x+d_1)\ e_n+d_2,
   \eeq
   where $d_1\in\mathbb{R}^n$ is the measurement noise and $d_2\in\mathbb{R}^n$ the external perturbation. We set $d=(d_1,d_2)\in\mathbb{R}^{2n}$ and we refer to it as the perturbation. Note that we are allowing unmatched uncertainties.
 
We now provide an ISS type of result regarding the robust properties of the perturbed system \eqref{eq:n-int-P} stabilized with $k(x)=\omega_{\kappa(x)}^{H}(x)$ given by
\begin{equation}\label{eq:closedS0-P}
 \dot x=J_nx+b\omega_{\kappa(x+d_1)}^{H}(x)\ e_n+d_2, \quad  x,d_1,d_2\in\mathbb{R}^n,
 \end{equation}
 where $b$ verifies \eqref{eq:bb}. As before, we can assume with no loss of generality that $\underline{b}=1$. We have the following result.
\begin{Theorem}\label{th:TH3}
With the assumptions of Theorem~\ref{th:TH2}, System \eqref{eq:closedS0-P} is (ISS) for any bounded $d=(d_1,d_2)$.
If $d_1=0$ and $d_2$ is parallel to $e_n$ (matched uncertainty),
 then convergence occurs in fixed time. The same conclusion holds for any prescribed time $T$ by using the feedback $k_\mu(x)=\omega_{\kappa(D^\rf_{\mu} x)}^{H}(D^\rf_{\mu} x)$ with $\mu>0$ depending on $T$.
\end{Theorem}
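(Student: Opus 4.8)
The plan is to analyze the closed loop \eqref{eq:closedS0-P} region by region according to the value of the state-dependent degree $\kappa(x+d_1)\in[-\kappa_0,\kappa_0]$, and to patch Lyapunov estimates obtained in the three zones $\kappa=\kappa_0$ (far from the origin), $\kappa=-\kappa_0$ (close to the origin) and $\kappa\in(-\kappa_0,\kappa_0)$ (the transition layer around $B^0_1$). First one records well-posedness: the right-hand side of \eqref{eq:closedS0-P} is continuous in $x$ (since $\kappa$ is continuous and $(\kappa,x)\mapsto\omega^H_\kappa(x)$ is jointly continuous) and measurable in $t$, so Carath\'eodory solutions exist locally, global existence following from the a priori bounds below. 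Throughout one uses that, as $\underline b=1$, the geometric condition \eqref{eq:geom} applied to $V_{\pm\kappa_0}$ and to $V_0$ makes the excess term $(b-1)\frac{\partial V_\kappa}{\partial x_n}\omega^H_\kappa(x)\le0$ harmless, exactly as in \eqref{eq:V+der} and \eqref{eq:diff0}.

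For the far zone I would use $V_{\kappa_0}$: when $V_{\kappa_0}(x)$ is large enough relative to $\|d_1\|_\infty$ one has $V_0(x+d_1)>1+m$, so $\kappa(x+d_1)=\kappa_0$ and the closed loop reduces to $\dot x=J_nx+b\,\omega^H_{\kappa_0}(x)e_n+d_2$; combining \eqref{eq:diffV}, the geometric condition \eqref{eq:geom}, and the homogeneity bound $\|\nabla V_{\kappa_0}(x)\|\le c\,V_{\kappa_0}(x)^{(1+\kappa_0)/(2+\kappa_0)}$, one obtains $\dot V_{\kappa_0}\le -C\,V_{\kappa_0}^{1+\alpha(\kappa_0)}+c\,\|d_2\|_\infty\,V_{\kappa_0}^{(1+\kappa_0)/(2+\kappa_0)}$, where the dissipation exponent $1+\alpha(\kappa_0)=\frac{2+2\kappa_0}{2+\kappa_0}$ strictly exceeds $\frac{1+\kappa_0}{2+\kappa_0}$. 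Hence there is $\chi_1\in\Ks$ with $\dot V_{\kappa_0}<0$ outside $\{V_{\kappa_0}\le\chi_1(\|d_2\|_\infty)\}$, and every trajectory enters a bounded forward-invariant set $\Omega_d=\{V_{\kappa_0}\le N(\|d\|_\infty)\}$ in a time controlled by $V_{\kappa_0}(x_0)$ and $\|d\|_\infty$ (this also yields global existence). Inside $\Omega_d$ one repeats the argument with $V_{-\kappa_0}$ on $\{V_0(x+d_1)<1-m\}$ (negative degree, so $\dot V_{-\kappa_0}<0$ outside some $\{V_{-\kappa_0}\le\chi_2(\|d_2\|_\infty)\}$) and with $V_0$ on the transition layer, where $x$ stays in a compact annulus by Lemma~\ref{lem:est-x} and $\dot V_0\le -x^TQx+2\overline b\,|x^TPe_n|\,|\omega^H_{\kappa(x+d_1)}(x)-\omega^H_0(x)|+2|x^TPd_2|$ is, for $\|d\|_\infty$ below a threshold $\delta_0$, still strictly negative --- as it is for $d=0$ by the choice of $m,\kappa_0$, cf.\ \eqref{eq:est-cross} --- so the layer is traversed in bounded time. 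Concatenating along the funnel far $\to$ transition $\to$ near --- a cascade, so no small-gain condition is needed --- produces $\beta\in\KL$ and $\gamma\in\Ks$ with $\gamma(0)=0$ (using $\chi_i(0)=0$ and that for $d=0$ the trajectory reaches the origin by Theorem~\ref{th:TH1}) together with the (ISS) estimate $|x(t)|\le\beta(|x_0|,t)+\gamma(\|d\|_\infty)$; for $\|d\|_\infty$ large the crude bound $V_{\kappa_0}(x(t))\le\max(V_{\kappa_0}(x_0),N(\|d\|_\infty))$ is already of this form and is merged in. I expect the main obstacle to be exactly this patching: proving the transition layer is crossed rather than chattered on, and keeping the residual-ball gain of class $\Ks$ when the measurement noise $d_1$ --- entering only through $\kappa(x+d_1)$ --- is not small.

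For the matched noise-free case $d_1=0$, $d_2=\delta(t)e_n$, the closed loop is $\dot x=J_nx+(b\,\omega^H_{\kappa(x)}(x)+\delta(t))e_n$, i.e.\ the fixed-time stable system of Theorem~\ref{th:TH1} perturbed by a bounded matched term. Since the degree is now the true $\kappa(x)$, the far zone is genuinely $\mathbf{r}(\kappa_0)$-homogeneous of positive degree, so by Lemma~\ref{le:notreL1} together with the $V_{\kappa_0}$-estimate above it drives the state into the transition layer in a time bounded uniformly in $x_0$ (depending only on $\|\delta\|_\infty$); the layer is crossed in bounded time; and the near zone is genuinely $\mathbf{r}(-\kappa_0)$-homogeneous of negative degree, hence reaches $\{V_{-\kappa_0}\le\chi_2(\|\delta\|_\infty)\}$ in finite time starting from the compact set $\Omega_d$. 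Summing these three uniformly bounded times shows the residual ball is reached in a fixed time $T^\star$ independent of $x_0$. Finally, the dilation $k_\mu(x)=\omega^H_{\kappa(D^\rf_\mu x)}(D^\rf_\mu x)$ turns, through $y=D^\rf_\mu x$ and the time change $t\mapsto\mu t$, the closed loop into the same one run at speed $\mu$ (exactly as in the proof of Theorem~\ref{th:TH2}), so the settling time to the residual ball becomes $T^\star/\mu$; taking $\mu\ge T^\star/T$ gives prescribed-time convergence to that ball.
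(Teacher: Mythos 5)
Your route is genuinely different from the paper's, and the difference matters. The paper does not attempt a direct construction of $\beta\in\KL$ and $\gamma\in\Ks$: it reduces ISS, via Sontag's characterization \cite[Theorem 2]{Sontag-2007} together with the $0$-disturbance fixed-time stability of Theorem~\ref{th:TH1}, to an asymptotic-gain estimate (Proposition~\ref{prop:ISS}) on the composite function $Z$. That estimate in turn rests on the three regional dissipation inequalities \eqref{eq:diff-1}--\eqref{eq:diff-3}, which are proved in the Appendix \emph{for arbitrary bounded} $d=(d_1,d_2)$: the whole technical core is the homogeneity-based bound on the feedback mismatch $\omega^{H}_{\kappa(x+d_1)}(x)-\omega^{H}_{\kappa(x)}(x)$ (estimates \eqref{eq:est11}--\eqref{eq:est12}, the normalization $[x]^{\kappa_0}$, the two-case split on $V_{\kappa_0}(d_1)/V_{\kappa_0}(x)$), plus the switching analysis in the proof of Proposition~\ref{prop:ISS}, where infinitely many crossings of the shell $B^{0}_{1-m,1+m}$ are not excluded but are shown to force $\tfrac{C(1-m)}{2}\le F_2(\Vert d\Vert_\infty)$, i.e.\ chattering is converted into a lower bound on the disturbance and hence into an admissible gain.

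This is exactly the part your proposal leaves open, and you say so yourself (``I expect the main obstacle to be exactly this patching\dots when the measurement noise $d_1$ is not small''). Your small-$\Vert d\Vert$ analysis is sound (in your far zone $\kappa(x+d_1)=\kappa_0$ exactly, so there is no mismatch term at all, a nice simplification; on the layer the mismatch is uniformly of order $\kappa_0^{\min(1,r_n)}$ as in Theorem~\ref{th:TH1}, so $V_0$ decreases strictly and there is no chattering), but for $\Vert d\Vert$ above your threshold $\delta_0$ the only thing you offer is the bound $V_{\kappa_0}(x(t))\le\max(V_{\kappa_0}(x_0),N(\Vert d\Vert_\infty))$, which by itself is an ultimate-boundedness (ISpS-type) statement: the set $\{V_{\kappa_0}\le N(\Vert d\Vert_\infty)\}$ does not shrink below the $1+m$ shell as $\Vert d\Vert_\infty\to0$, so ``merging it in'' only works if you additionally verify (i) finite attraction to and forward invariance of that set with a settling time uniform in $x_0$ and in $d$, (ii) that the resulting piecewise-defined gain can be made a single class-$\Ks$ function with $\gamma(0)=0$, and (iii) that one pair $(\beta,\gamma)$ serves \emph{all} bounded $d$ (the uniformity that ISS, as opposed to your per-$d$ estimates, requires). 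These verifications are completable along the lines you indicate, but as written the argument stops precisely where the theorem's difficulty lies; in the paper that difficulty is discharged by the Appendix estimates and the crossing argument, not deferred. (Your reading of the matched, $d_1=0$ case as fixed-time convergence to the residual set, and the dilation argument for prescribed time, are consistent with the paper.)
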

\begin{remark}
This result improves \cite[Corollary 1]{LREPP-2018} where only the property (ISpS) was obtained.
\end{remark}
\begin{remark}
Using $k_\mu$ instead of $k_1$ will modify the gain functions in Definition~\ref{def:ISpS} since the disturbance $d=(d_1,d_2)$ must be modified to $(D^\rf_{\mu} d_1,D^\rf_{\mu} d_2/\mu)$. 
\end{remark}
For the sequel, we need the following definition. A function of class $KL$ is a continuous function $F:\mathbb{R}_+\to\mathbb{R}_+$ which is increasing, $F(0)=0$ and tends to infinity as its argument tends to infinity. 

To prove the theorem, we are not able to exhibit an ISS-Lyapunov function but, by taking into account Theorem~\ref{th:TH1} and using the characterization of (ISS) provided by \cite[Theorem 2]{Sontag-2007}, it is enough to prove the following proposition.
\begin{proposition}\label{prop:ISS}
There exists a function $F$ of class $KL$
such that for every bounded disturbances $d_1,d_2:\mathbb{R}_+\to\mathbb{R}^n$ and any trajectory of  \eqref{eq:closedS0-P}, one has
\begin{equation}\label{eq:est-ISS}
\limsup_{t\to\infty} Z(x(t))\leq F(\Vert d_1\Vert_{\infty}+\Vert d_2\Vert_{\infty}),
\end{equation}
where 
\begin{equation}\label{eq:Z}
Z(x)=\min\Big(V_0(x),V_{\kappa_0}^{1+\alpha(\kappa_0)}(x),V_{-\kappa_0}^{1-\alpha(\kappa_0)}(x)\Big), \quad x\in \mathbb{R}^n.
\end{equation}
\end{proposition}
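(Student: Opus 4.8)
The plan is to establish \eqref{eq:est-ISS} by a three-region analysis dictated by the structure of $\kappa(\cdot)$ in \eqref{eq:feed00} and of $Z$ in \eqref{eq:Z}, treating the measurement noise $d_1$ as a perturbation of the switching surfaces and $d_2$ as an additive perturbation of the dynamics. Fix bounded $d_1,d_2$ and write $\varepsilon:=\Vert d_1\Vert_\infty+\Vert d_2\Vert_\infty$. The first step is a continuity/compactness estimate: since $x\mapsto\kappa(x)$ is continuous and $\omega_\kappa^H$ depends continuously on $\kappa$ (uniformly on compacta, by the uniform-in-$\kappa$ constants in Proposition~\ref{prop:Hong-gen} and Lemma~\ref{lem:est-om}), the feedback error $\vert\omega_{\kappa(x+d_1)}^H(x)-\omega_{\kappa(x)}^H(x)\vert$ is bounded on any fixed sublevel set by a modulus $\omega_1(\Vert d_1\Vert_\infty)$ with $\omega_1\in\Ks$; likewise $\vert\omega_{\kappa(x)}^H(x)-\omega_0^H(x)\vert$ is controlled on $B^0_{1-m,1+m}$ by $C^1_n\kappa_0^{\min(1,r_n)}$ via Lemma~\ref{lem:est-om}. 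This reduces the closed loop \eqref{eq:closedS0-P} to the nominal fixed-time-stable system of Theorem~\ref{th:TH1} plus a perturbation whose size is $O(\varepsilon)$ on bounded sets.

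The second step is the ultimate-boundedness argument in the "outer" region $B^0_{>1+m}$. There the nominal closed loop is $\mathbf{r}(\kappa_0)$-homogeneous of degree $2+\kappa_0>0$ with $\dot V_{\kappa_0}\le -CV_{\kappa_0}^{1+\alpha(\kappa_0)}$; adding the perturbation, $\dot V_{\kappa_0}\le -CV_{\kappa_0}^{1+\alpha(\kappa_0)}+c_1\,\vert\nabla V_{\kappa_0}\cdot e_n\vert\,(\varepsilon+\omega_1(\Vert d_1\Vert_\infty))$, and since $\vert\nabla V_{\kappa_0}\cdot e_n\vert\le c_2 V_{\kappa_0}^{(1+\kappa_0)/(2+\kappa_0)}$ by homogeneity, a standard comparison shows that $V_{\kappa_0}$ — hence $V_{\kappa_0}^{1+\alpha(\kappa_0)}$ — enters and stays in a sublevel set of size $\rho_1(\varepsilon)$ with $\rho_1\in\Ks$, after which the trajectory reaches $B^0_{1+m}$ (provided $\rho_1(\varepsilon)$ is below the corresponding threshold). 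The symmetric computation with $V_{-\kappa_0}$, which has degree $2-\kappa_0$ and exponent $1-\alpha(\kappa_0)<1$, gives the analogous statement in the "inner" region $B^0_{<1-m}$: trajectories there are driven into a $\rho_2(\varepsilon)$-sublevel set of $V_{-\kappa_0}^{1-\alpha(\kappa_0)}$. In the "crossing" region $B^0_{1-m,1+m}$ one uses $\dot V_0\le -CV_0+2\sqrt{V_0}\sqrt{V_0(e_n)}(M_\delta(m,\kappa_0)+c_3\varepsilon)$ from \eqref{eq:diff0}–\eqref{eq:est-cross}; for $\varepsilon$ small this forces $\dot V_0\le -CV_0/4<0$, so the strip is crossed downward monotonically, and for $\varepsilon$ not small the bound \eqref{eq:est-ISS} holds trivially by enlarging $F$.

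The third step assembles these into \eqref{eq:est-ISS} by tracking where a trajectory can stabilise: either it is ultimately trapped in the outer $\rho_1(\varepsilon)$-sublevel set (controlling $V_{\kappa_0}^{1+\alpha(\kappa_0)}$, hence $Z$), or it descends through the strip and is trapped in the inner $\rho_2(\varepsilon)$-sublevel set (controlling $V_{-\kappa_0}^{1-\alpha(\kappa_0)}$, hence $Z$), or — if $d_1$ can keep nudging $\kappa(x+d_1)$ near the sign flip — it hovers in a neighbourhood of $B^0_1$ of radius $O(\varepsilon)$, controlling $V_0$, hence $Z$. Taking $F$ to be a $KL$ majorant of $\rho_1,\rho_2$ and the hovering bound (and $\equiv+\infty$-dominating on the non-small-$\varepsilon$ range, then capped) yields \eqref{eq:est-ISS}; (ISS) of \eqref{eq:closedS0-P} then follows from Theorem~\ref{th:TH1} and \cite[Theorem 2]{Sontag-2007}. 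The matched-case and prescribed-time claims of Theorem~\ref{th:TH3} reduce, via the rescaling of Theorem~\ref{th:TH2}, to the $d_1=0$ version of Theorem~\ref{th:TH1} since then $\omega_0^H$ is exactly implemented and $d_2\parallel e_n$ is absorbed as in the proof of Proposition~\ref{prop:fixed1}. The main obstacle is the crossing region: one must show $d_1$ cannot exploit the switching in $\kappa$ to produce sustained positive excursions of $V_0$ inside $B^0_{1-m,1+m}$ larger than $O(\varepsilon)$, i.e. that the continuity modulus $\omega_1$ genuinely vanishes with $\Vert d_1\Vert_\infty$ uniformly over that compact strip — this is exactly where the uniform-in-$\kappa$ estimates of Proposition~\ref{prop:Hong-gen} and Lemma~\ref{lem:est-om} are indispensable — together with ruling out chattering of the Filippov solutions across the switching sets, which is handled by noting $V_0$ is a common strict Lyapunov function on the strip for all selections once $\varepsilon$ is small.
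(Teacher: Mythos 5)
Your overall architecture coincides with the paper's: the same three-region split ($B^{0}_{>1+m}$ with $V_{\kappa_0}$, the strip $B^{0}_{1-m,1+m}$ with $V_0$, and $B^{0}_{<1-m}$ with $V_{-\kappa_0}$) and region-wise differential inequalities. However, two of the steps you assert are exactly where the real work lies, and as written they do not hold. First, your perturbation bound in the outer region is justified only by continuity/compactness ``on any fixed sublevel set,'' but $B^{0}_{>1+m}$ is unbounded. The mismatch $\omega_{\kappa_0}^{H}(x+d_1)-\omega_{\kappa_0}^{H}(x)$ is \emph{not} bounded by a modulus of $\Vert d_1\Vert_\infty$ uniformly in $x$: since $\omega_{\kappa_0}^{H}$ is $\mathbf{r}(\kappa_0)$-homogeneous of degree $1+n\kappa_0>1$, this difference grows with $\Vert x\Vert$ (already in one dimension, $\lceil x+d\rfloor^{1.5}-\lceil x\rfloor^{1.5}\sim 1.5\vert x\vert^{0.5}d$). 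So your claimed inequality $\dot V_{\kappa_0}\le -CV_{\kappa_0}^{1+\alpha(\kappa_0)}+c_1\vert\nabla V_{\kappa_0}\cdot e_n\vert\,(\varepsilon+\omega_1(\varepsilon))$ with an $x$-independent error factor is unproven and false in general. The paper's counterpart, inequality \eqref{eq:diff-1} (and \eqref{eq:diff-3}), requires a genuinely global homogeneity argument: normalizing $x$ to the weighted sphere, splitting according to whether $V_{\kappa_0}(d_1)/V_{\kappa_0}(x)$ is small or large, absorbing part of the error into $-\tfrac{C}{2}V_{\kappa_0}^{1+\alpha(\kappa_0)}$ (note the halved decay rate), and using weighted H\"older/Young inequalities for the $d_2$ term. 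Lemma~\ref{lem:est-om} alone cannot substitute for this, since it is stated only on the compact shell $B^\kappa_{1-m,1+m}$.

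Second, your assembly step does not cover the delicate case in which the trajectory crosses between regions infinitely often. Saying that the trajectory then ``hovers in a neighbourhood of $B^0_1$ of radius $O(\varepsilon)$,'' or that for $\varepsilon$ not small the bound ``holds trivially by enlarging $F$,'' is not a proof: for non-small disturbances the excursions above the strip can be large, and one must still produce a bound on $\limsup_{t\to\infty}Z(x(t))$ depending only on $\varepsilon$ (not on the trajectory). The paper handles this recurrent case by a quantitative excursion argument: if the set of times in $B^{0}_{>1+m}$ is an infinite union of intervals, then integrating \eqref{eq:diff-2} along each return through the strip forces $\tfrac{C(1-m)}{2}\le F_2(\varepsilon)$, and then the level sets of $V_{\kappa_0}^{1+\alpha(\kappa_0)}$ visited infinitely often are bounded by $\tfrac{2F_1(\varepsilon)}{C}$ (or by $C_V^{1+\alpha(\kappa_0)}$ times $\tfrac{2F_2(\varepsilon)}{C(1-m)}$) by integrating \eqref{eq:diff-1} over the excursion intervals. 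Without such an argument, the claimed $\limsup$ estimate in the oscillatory regime is not established. (Incidentally, the chattering concern you flag is not an issue here: $x\mapsto\omega^{H}_{\kappa(x)}(x)$ is continuous, so solutions are Carath\'eodory and the ``almost everywhere'' in \eqref{eq:diff-1}--\eqref{eq:diff-3} only reflects the measurable time-dependence of $d_1,d_2$.)
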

\begin{proof} The argument is similar to Item $(S-\infty)$ in 
\cite[Proposition 2]{CHL-2015}. It is based on the following three inequalities, whose proofs are given in Appendix.
\begin{description}
\item[(i)] On the open set $B^{0}_{>1+m}$, the time derivative $\dot V_{\kappa_0}$
of $V_{\kappa_0}$ along trajectories of  \eqref{eq:closedS0-P} verifies almost everywhere
\begin{equation}\label{eq:diff-1}
\dot V_{\kappa_0}\leq -\frac{C}2V_{\kappa_0}^{1+\alpha(\kappa_0)}+F_1(\Vert d_1\Vert_{\infty}+\Vert d_2\Vert_{\infty}),
\end{equation}
where $F_1$ is a function of class $KL$.
\item[(ii)] On the  set $B^{0}_{1-m,1+m}$, the time derivative $\dot V_{0}$
of $V_{0}$ along trajectories of  \eqref{eq:closedS0-P} verifies almost everywhere
\begin{equation}\label{eq:diff-2}
\dot V_{0}\leq -\frac{C}2V_{0}+F_2(\Vert d_1\Vert_{\infty}+\Vert d_2\Vert_{\infty}),
\end{equation}
where $F_2$ is a function of class $KL$.
\item[(iii)] On the open set $B^{0}_{<1-m}$, the time derivative $\dot V_{-\kappa_0}$
of $V_{-\kappa_0}$ along non trivial trajectories of  \eqref{eq:closedS0-P} verifies almost everywhere
\begin{equation}\label{eq:diff-3}
\dot V_{-\kappa_0}\leq -\frac{C}2V_{-\kappa_0}^{1+\alpha(-\kappa_0)}+F_3(\Vert d_1\Vert_{\infty}+\Vert d_2\Vert_{\infty}),
\end{equation}
where $F_3$ is a function of class $KL$.
\end{description}
Let $x(\cdot)$ be a non trivial trajectory of  \eqref{eq:closedS0-P}.

Assuming that we have at hand the above inequalities. Suppose  first that there exists a time $t_0\geq 0$ such that one of the following situations occurs:
\begin{description}
\item[(a)] for every $t\geq t_0$, $x(t)\in B^{0}_{>1+m}$. By using \eqref{eq:diff-1}, one gets that
$$
\limsup_{t\to\infty}V_{\kappa_0}^{1+\alpha(\kappa_0)}(x(t))\leq \frac{2F_1(\Vert d_1\Vert_{\infty}+\Vert d_2\Vert_{\infty})}C;
$$
\item[(b)] for every $t\geq t_0$, $x(t)\in B^{0}_{1-m,1+m}$. By using \eqref{eq:diff-2}, one gets that
$$
\limsup_{t\to\infty}V_{0}(x(t))\leq \frac{2F_2(\Vert d_1\Vert_{\infty}+\Vert d_2\Vert_{\infty})}C;
$$
\item[(c)] for every $t\geq t_0$, $x(t)\in B^{0}_{<1-m}$. By using \eqref{eq:diff-3}, one gets that
$$
\limsup_{t\to\infty}V_{\kappa_0}^{1+\alpha(-\kappa_0)}(x(t))\leq \frac{2F_3(\Vert d_1\Vert_{\infty}+\Vert d_2\Vert_{\infty})}C.
$$
\end{description}
Let $I_+$ ($I_-$ respectively) be the set of times $t$ such that $x(t)\in B^{0}_{>1+m}$ ($x(t)\in B^{0}_{<1-m}$ respectively). 
If such a $t_0$ does not exists, either $I_+$ or $I_-$ is an infinite (countable) union of disjoint non trivial intervals $(s_k,t_k)$, $k\geq 0$, where $\lim_{k\to\infty}s_k=\infty$. We analyse only the case where $I_+=\cup_{k\geq 0}(s_k,t_k)$ since handling the other case is entirely similar. 

Set $C_V:=\max_{x\in B^0_{1+m}}V_{\kappa_0}$. 
For $k\geq 0$, consider the trajectory $x(\cdot)$ on $[t_k,s_{k+1}]$. Recall that 
$V_0(x(t_k))=V_0(x(s_{k+1}))=1+m$ by definition of $t_k,s_{k+1}$. Then, there exists 
$\tilde{t}_k\in [t_k,s_{k+1})$ such that $V_0(x(\tilde{t}_k))\leq V_0(x(s_{k+1}))$
and $V_0(x(t_))\geq 1-m$ on $[\tilde{t}_k,s_{k+1}]$. Integrating \eqref{eq:diff-2}
from  $\tilde{t}_k$ to $s_{k+1}$ yields that $\frac{C(1-m)}2\leq F_2(\Vert d_1\Vert_{\infty}+\Vert d_2\Vert_{\infty})$. Set now $L:=\limsup_{t\to\infty}V_{\kappa_0}^{1+\alpha(\kappa_0)}(x(t))$. If $L\leq C_V^{1+\alpha(\kappa_0)}$, then
$$
L\leq \frac{2C_V^{1+\alpha(\kappa_0)}}{C(1-m)}F_2(\Vert d_1\Vert_{\infty}+\Vert d_2\Vert_{\infty}).
$$
Otherwise, assume that $L> C_V^{1+\alpha(\kappa_0)}$. 
Consider then the non empty set of $v> C_V^{1+\alpha(\kappa_0)}$ for which there exist two sequences
$t_k\leq \tilde{t}_k<\tilde{s}_{k+1}<s_k$ such that 
$$
V_{\kappa_0}^{1+\alpha(\kappa_0)}(x(\tilde{t}_k))=V_{\kappa_0}^{1+\alpha(\kappa_0)}(x(\tilde{s}_{k+1}))=v \hbox{ and }V_{\kappa_0}^{1+\alpha(\kappa_0)}(x(t))\geq v,\ t\in [\tilde{t}_k,\tilde{s}_{k+1}].
$$ 
Clearly $L$ is the supremum of such $v$'s.
Integrating \eqref{eq:diff-1} between $\tilde{t}_k$ and $\tilde{s}_{k+1}$
yields at once that 
$v\leq \frac{2F_1(\Vert d_1\Vert_{\infty}+\Vert d_2\Vert_{\infty})}C$. We deduce at once that the content of Item $(b)$ above holds true. By collecting all the cases, we conclude the proof of Proposition~\ref{prop:ISS}.

\end{proof}

\section{Conclusion}\label{s:conclusion}
In this paper, we have addressed the issue of prescribed-time stabilisation of an
$n$-chain of integrators, $n\geq 1$, either pure or perturbed. We have first recasted the results obtained in  \cite{Krstic2017} within the framework of time-varying homogeneity and hence provided simpler proofs. As noticed in  \cite{Krstic2017}, the feedback laws (linear or finite time) arising from this time-varying approach do not perform well when the $n$-chain of integrators is subject to perturbations (especially measurement noise), even if one stops before the prescribed settling time. We instead propose to rely on feedback laws handling fixed-time stabilisation and to apply a standard trick of time-scale reparametrisation and homogeneity  to render the modified stabilisers fit for prescribed-time stabilisation of an $n$-th order chain perturbed of integrators. We perform that strategy in two steps. The first one That two-step strategy consists in using feedbacks similar to those of \cite{HLCH-2017} and then by relying on a nice deformation argument proposed in \cite{LREPP-2018}.  In a second step, we obtain an ISS type of result in the presence of measurement noise for prescribed-time stabilisation of an $n$-th perturbed  chain of integrators.
However, such an approach is meaningful if one can get  an explicit hold on the various parameters involved in the above construction. This is why we devoted a section for such an objective.

\section{Appendix}\label{s:app}
\subsection{ Proof of Proposition~\ref{prop:LMI0}}
We next prove the result for $\eta=1$ and the argument is inspired from the proof of Lemma 4.0 of \cite{GK-1994}, and partly given in \cite{CS-2010}.
Given a vector $K=(k_1,\cdots,k_n)^T\in\mathbb{R}^n$ with positive entries, we consider the invertible $n\times n$  matrix $M_K$ defined by
\begin{equation}\label{eq:MK}
\begin{aligned}
M_K & = \begin{pmatrix}
k_1&k_2&\cdots&k_n\\
0&k_1&\cdots&k_{n-1}\\
\vdots & \vdots & \ddots &\vdots \\
0&0 &\cdots&k_1\\
\end{pmatrix}.
\end{aligned}
\end{equation}
Note that
$$
M_K e_n=K,\ M^T_Ke_1=K, \ M_KJ_nM^{-1}_K=J_n.
$$
The last equation comes from the fact that $M_K$ is a polynomial function of $J_n$, namely $M_K=\sum_{i=1}^nk_iJ^{i-1}_n$. 

Multiplying the LMI \eqref{eq:LMI0} on the left and on the right by $(M^T_K)^{-1}$ and $M^{-1}_K$ respectively yields the following LMI
$$
\Big(J_n-b\ Ke_1^T\Big)^TS_1+S_1\Big(J_n-b Ke_1^T\Big)\leq -\rho M^T_KM_K,\quad \forall b\geq \underline{b}.
$$
where $S_1=(M^T_K)^{-1}SM^{-1}_K$.  Let $\rho_*>0$ such that $\rho M^T_KM_K\geq \rho_* Id_n$.

We are left to prove that there exists $\rho_*>0$, $S_1$ symmetric positive definite and a vector $K\in\mathbb{R}^n$ so that the following LMI holds true,  
\beq\label{eq:LMI-1}
\Big(J_n-b\ Ke_1^T\Big)^TS_1+S_1\Big(J_n-b\ Ke_1^T\Big)\leq -\rho_* Id_n,\quad \forall b\geq \underline{b}.
\eeq
For $n=1$, \eqref{eq:LMI-1} reduces to $-2bkS\leq -\mu_*$. By taking $S=1/2$ and $k=1/\underline{b}$ we get the result with $\rho_*=1$.

Let $n$ be a positive integer larger than or equal to two. Set $\tilde{e}_1=(1,\cdots,0)^T\in\mathbb{R}^{n-1}$ and $K=(k_1,L^T)^T$ with $L\in\mathbb{R}^{n-1}$ to be determined.
Notice that 
$$
J_n-b\ Ke_1^T=\begin{pmatrix}-b\ k_1&\tilde{e}_1^T\\-b\ L&J_{n-1}\\\end{pmatrix}.
$$
For $\Omega\in\mathbb{R}^{n-1}$, consider the $n\times n$ matrix $A_\Omega$ given by 
$$
A_\Omega=\begin{pmatrix}1&0\\ \Omega&Id_{n-1}\\ 
\end{pmatrix},
$$
We make the linear change of variable $y=A_\Omega x$ and we require the following condition on
$(k_1,L)$, i.e., $k_1\Omega+L=0$. One gets that 
$$
A_\Omega(J_n-b\ Ke_1^T)A^{-1}_\Omega=
\begin{pmatrix}-(b\ k_1+\tilde{e}_1^T\Omega)&\tilde{e}_1^T\\
-(J_{n-1}+\Omega\tilde{e}_1^T)\Omega&J_{n-1}+\Omega\tilde{e}_1^T\\
\end{pmatrix}.
$$
This linear change of variable amounts to multiply \eqref{eq:LMI-1} on the left by $(A_\Omega^T)^{-1}$ and on the right by $A^{-1}_\Omega$ and we still denote by $S$ the matrix $(A_\Omega^T)^{-1}SA^{-1}_\Omega$. We now pick $\Omega$ so that $J_{n-1}+\Omega\tilde{e}_1^T$ is Hurwitz and there exists a positive constant $\mu>0$ and a real symmetric positive definite $(n-1)\times (n-1)$ matrix $S_{n-1}>0$ such that
 $$
 (J_{n-1}+\Omega\tilde{e}_1^T)^TS_{n-1}+S_{n-1}(J_{n-1}+\Omega\tilde{e}_1^T)-\leq \rho_* Id_{n-1}.
 $$
After choosing $S=\begin{pmatrix}1&0\\
0&S_{n-1}\\
\end{pmatrix}$, one simply finds $k_1>0$ large enough to get the result. 

\begin{remark} One must notice the similarity of the argument which is essentially that of \cite{GK-1994} and \cite{CS-2010}, with the corresponding one in \cite{Krstic2017}. The one given here is more transparent and also allows to use the extra degree of freedom given by $\eta$. 
\end{remark}

\subsection{Proof of Equations \eqref{eq:diff-1}, \eqref{eq:diff-2} and \eqref{eq:diff-3}}
For $\kappa\in \{-\kappa_0,0,\kappa_0\}$, taking the time derivative $\dot V_{\kappa}$ of $V_{\kappa_0}$ along a trajectory of 
 \eqref{eq:closedS0-P} yields the inequality
 $$
 \dot V_{\kappa}\leq -C V_{\kappa_0}^{1+\alpha(\kappa)}(x)+b\langle \nabla V_{\kappa}(x),e_n\rangle\Big(\omega_{\kappa(x+d_1)}^{H}(x)-\omega_{\kappa(x)}^{H}(x)\Big)
+\langle \nabla V_{\kappa}(x),d_2\rangle.
$$
We will prove that in each region of interest, there exists $KL$ functions $F_1,F_2$ such that
\begin{equation}\label{eq:est11}
\overline{b}\Big\vert \langle \nabla V_{\kappa}(x),e_n\rangle\Big(\omega_{\kappa(x+d_1)}^{H}(x)-\omega_{\kappa(x)}^{H}(x)\Big)\Big\vert\leq \frac{C}4V_{\kappa}^{1+\alpha(\kappa)}(x)+F_1(\Vert d_1\Vert_{\infty}),
\end{equation}
and
\begin{equation}\label{eq:est12}
\Big\vert \langle \nabla V_{\kappa}(x),d_2\rangle\Big\vert\leq \frac{C}4V_{\kappa_0}^{1+\alpha(\kappa)}(x)+
F_2(\Vert d_2\Vert_{\infty}).
\end{equation}
Once this is established, one gets the conclusion by taking $F=F_1+F_2$.

We start by proving \eqref{eq:est12}. For $\kappa\in \{-\kappa_0,0\}$, the region of interest is bounded. Hence one immediately concludes by applying Cauchy-Schwartz inequality and taking an upper bound for the continuous function $\Vert \nabla V_{\kappa}\Vert$ on the region of interest. For $\kappa=\kappa_0$, we recall that, for $1\leq i\leq n$,  $\langle \nabla V_{\kappa}(x),e_i\rangle$ is $\mathbf{r}(\kappa_0)$-homogeneous of degree $(2+\kappa_0)-r_i$ with respect to the family of dilations $\big(D^{\mathbf{r}(\kappa_0)}_{\lambda}\big)_{\lambda>0}$. It is therefore immediate to see that there exists a positive constant $C_i$ such that $\vert \langle \nabla V_{\kappa_0}(x),e_i\rangle\vert \leq C_iV_{\kappa_0}^{1-\frac{r_i}{2+\kappa_0}}$ over $\mathbb{R}^n$. One deduces that
\begin{equation}\label{eq:VnV}
\Big\vert \langle \nabla V_{\kappa_0}(x),d_2\rangle\Big\vert\leq \sum_{i=1}^nC_iV_{\kappa_0}^{1-\frac{r_i}{2+\kappa_0}}\vert (d_2)_i\vert.
\end{equation}
Since every $r_i$ is positif and hence $1-\frac{r_i}{2+\kappa_0}<1+\alpha(\kappa_0)$, one can apply an appropriately weighted Holder inequality to get \eqref{eq:est12}.

We know turn to an argument for \eqref{eq:est11}. We provide an argument only for $\kappa=\kappa_0$ since for the other cases it is similar. If $\kappa(x+d_1)\neq \kappa_0$, then 
$V_0(x+d_1)\leq 1+m$ implying that $\Vert x\Vert\leq C_1\Vert d_1\Vert$ for some positive constant independent of $x,d_1$. Hence one can bound the left-hand side of \eqref{eq:est11} by $F_1(\Vert d_1\Vert)$ for some $KL$ function $F_1$, and then conclude. We now treat the case where $\kappa(x+d_1)= \kappa_0$. Recall that 
$\omega_{\kappa_0}^{H}$ is $\mathbf{r}(\kappa_0)$-homogeneous of degree $r_{n+1}:=1+n\kappa_0$ with respect to the family of dilations $\big(D^{\mathbf{r}(\kappa_0)}_{\lambda}\big)_{\lambda>0}$. For non zero $x\in \mathbb{R}^n$, we define the normalized vector 
$$
[x]^{\kappa_0}:=\frac{x}{V_{\kappa_0}^{\frac1{2+\kappa_0}}}\in B^{\kappa_0}_1.
$$
Then one has on $B^{\kappa_0}_{>1+m}$,
\begin{eqnarray}
&\langle \nabla V_{\kappa_0}(x),e_n\rangle\Big(\omega_{\kappa_0}^{H}(x+d_1)-\omega_{\kappa_0}^{H}(x)\Big)=\nonumber\\
&V_{\kappa_0}^{1-\frac{r_n}{2+\kappa_0}}(x)
\langle \nabla V_{\kappa_0}([x]^{\kappa_0}),e_n\rangle
\Big(V_{\kappa_0}^{\frac{r_{n+1}}{2+\kappa_0}}(x+d_1)\omega_{\kappa_0}^{H}([x+d_1]^{\kappa_0})-V_{\kappa_0}^{\frac{r_{n+1}}{2+\kappa_0}}(x)\omega_{\kappa_0}^{H}([x]^{\kappa_0})\Big)\nonumber\label{eq:final}\\
&=V_{\kappa_0}^{1+\alpha(\kappa_0)}(x)
\langle \nabla V_{\kappa_0}([x]^{\kappa_0}),e_n\rangle M(x,d_1),\label{eq:final}
\end{eqnarray}
where 
$$
M(x,d_1):=
\Big(\frac{V_{\kappa_0}(x+d_1)}{V_{\kappa_0}(x)}\Big)^{\frac{r_{n+1}}{2+\kappa_0}}\omega_{\kappa_0}^{H}([x+d_1]^{\kappa_0})-
\omega_{\kappa_0}^{H}([x]^{\kappa_0}).
$$
Moreover, we have the following result: there exists a positive constant $B$ such that, for every $x,d\in\mathbb{R}^n$ with $\Vert x\Vert,\Vert d\Vert\leq 1$, one has 
\begin{equation}\label{eq:est-om}
\vert \omega_{\kappa_0}^{H}(x+d)-\omega_{\kappa_0}^{H}(x)\vert \leq B \Vert d\Vert^{r_{n+1}},
\end{equation}
which is an immediate consequence of \eqref{eq:elem3}. 

Consider $\rho>0$ to be fixed small later. Assume first that 
$$
\frac{V_{\kappa_0}(d_1)}{V_{\kappa_0}(x)}=V_{\kappa_0}(\frac{d_1}{V_{\kappa_0}^{\frac1{2+\kappa_0}}})\leq \rho.
$$
We rewrite $M(x,d_1)$ as 
\begin{equation}\label{eq:M}
M(x,d_1)=\Big[\Big(\frac{V_{\kappa_0}(x+d_1)}{V_{\kappa_0}(x)}\Big)^{\frac{r_{n+1}}{2+\kappa_0}}-1\Big]\omega_{\kappa_0}^{H}([x+d_1]^{\kappa_0})+
\Big(\omega_{\kappa_0}^{H}([x+d_1]^{\kappa_0})-\omega_{\kappa_0}^{H}([x]^{\kappa_0})\Big).
\end{equation}
The term in brackets in \eqref{eq:M} can be written as
$$
\Big(\frac{V_{\kappa_0}(x+d_1)}{V_{\kappa_0}(x)}\Big)^{\frac{r_{n+1}}{2+\kappa_0}}-1=
V_{\kappa_0}([x]^{\kappa_0}+\frac{d_1}{V_{\kappa_0}^{\frac1{2+\kappa_0}}})-
V_{\kappa_0}([x]^{\kappa_0}),
$$
Notice that $[x]^{\kappa_0}$ and $\frac{d_1}{V_{\kappa_0}^{\frac1{2+\kappa_0}}}$ 
belong to the compact set $B^{\kappa_0}_{\leq 1+m}$ and hence, since $V_{\kappa_0}$ is of class $C^1$, there exists a positive constant $C_2$ independent of $x,d_1$, such that 
$$
\Big\vert V_{\kappa_0}([x]^{\kappa_0}+\frac{d_1}{V_{\kappa_0}^{\frac1{2+\kappa_0}}})-
V_{\kappa_0}([x]^{\kappa_0})\Big\vert\leq C_2 V_{\kappa_0}(\frac{d_1}{V_{\kappa_0}^{\frac1{2+\kappa_0}}})^{\frac1{2+\kappa_0}}\leq C_2\rho^{\frac1{2+\kappa_0}}.
$$
By using \eqref{eq:est-om}, we can bound the term in parentheses in \eqref{eq:M} as follows,
$$
\vert \omega_{\kappa_0}^{H}([x+d_1]^{\kappa_0})-\omega_{\kappa_0}^{H}([x]^{\kappa_0})\vert\leq B \Vert [x+d_1]^{\kappa_0}-[x]^{\kappa_0}\Vert^{r_{n+1}}.
$$
In turn, one has
$$
[x+d_1]^{\kappa_0}-[x]^{\kappa_0}=\Big[\Big(\frac{V_{\kappa_0}(x)}{V_{\kappa_0}(x+d_1)}\Big)^{\frac{r_{n+1}}{2+\kappa_0}}-1\Big][x]^{\kappa_0}+
\Big(\frac{V_{\kappa_0}(x+d_1)}{V_{\kappa_0}(x)}\Big)^{\frac{r_{n+1}}{2+\kappa_0}}\frac{d_1}{V_{\kappa_0}^{\frac1{2+\kappa_0}}}.
$$
Using the homogeneity property of $V_{\kappa_0}$, one gets that there exists a positive constant $C_3$ independent of $x,d_1$ such that 
$M(x,d_1)\leq C_3\rho^{\frac1{2+\kappa_0}}$. Since $\omega_{\kappa_0}^H$ is bounded on $B^0_1$, one gets 
$$
\Big\vert \langle \nabla V_{\kappa}(x),e_n\rangle\Big(\omega_{\kappa(x+d_1)}^{H}(x)-\omega_{\kappa(x)}^{H}(x)\Big)\Big\vert\leq \frac{C}4V_{\kappa_0}^{1+\alpha(\kappa)}(x),
$$
for $\rho$ small enough, and hence \eqref{eq:est11}.

We now assume that 
\begin{equation}\label{eq:ouf}
\frac{V_{\kappa_0}(d_1)}{V_{\kappa_0}(x)}
>\rho.
\end{equation}
In that case, the conclusion follows if one can prove that there exists $C_4>0$ independent of $x,d_1$ such that 
\begin{equation}\label{eq:final1}
V_{\kappa_0}(x+d_1)\leq C_4V_{\kappa_0}(d_1)+F_1(\Vert d_1\Vert),
\end{equation}
for some $KL$ function $F_1$. 
Indeed, in \eqref{eq:final}, the term in parentheses becomes bounded by $F_2(\Vert d_1\Vert)$
for some $KL$ function $F_2$ and then one gets \eqref{eq:est11} after using Holder's inequality with appropriate weights.

We are then left to prove \eqref{eq:final1}. For that purpose set $f(s):=V_{\kappa_0}(x+sd_1)$ for $s\in[0,1]$ and let $s^*\in \in[0,1]$ such that $f(s^*)=\max_{s\in [0,1]}f(s)$.
We will prove \eqref{eq:final1} with $f(s^*)$ on the left-hand side and hence the conclusion. We can therefore assume with no loss of generality that $s^*=1$.
One has 
\begin{eqnarray*}
f(1)-f(0)&\leq&\int_0^1\vert f'(s)\vert ds=\int_0^1\vert\langle \nabla V_{\kappa_0}(x+sd_1),d_1\rangle\vert ds\\
&\leq&\sum_{i=1}^n \int_0^1\Big\vert\frac{\partial V_{\kappa_0}}{\partial x_i}(x+sd_1)(d_1)_i\Big\vert ds\leq C \sum_{i=1}^n \int_0^1V_{\kappa_0}^{\frac{2+\kappa_0-r_i}{2+\kappa_0}}(x+sd_1)\vert (d_1)_i\vert ds\\
 &\leq&C\sum_{i=1}^nV_{\kappa_0}^{\frac{2+\kappa_0-r_i}{2+\kappa_0}}(x+d_1)\vert (d_1)_i\vert\leq \frac{f(1)}2+CF_3(\Vert d\Vert),
\end{eqnarray*}
for some $KL$ function $F_3$.  In the above, we have used \eqref{eq:VnV}, the definition of $s^*=1$ and for the final inequality, Holder's inequality with appropriate weights. Combining the previous inequality with \eqref{eq:ouf}, one concludes the argument for \eqref{eq:final1} and hence that of 
 \eqref{eq:final1}.
\bibliography{biblio-YR}
\bibliographystyle{abbrv}
\end{document}